\newtheorem{theo}{Theorem}[section]
\newtheorem{defi}[theo]{Definition}
\newtheorem{lem}[theo]{Lemma} 
\newtheorem{prop}[theo]{Proposition}
\newtheorem{rem}[theo]{Remark}
\newtheorem{coro}[theo]{Corollary}
\newtheorem{exam}[theo]{Example}
\newtheorem{notation}[theo]{Notation}
\newcommand{\agot}{\ensuremath{\mathfrak{a}}}
\newcommand{\ggot}{\ensuremath{\mathfrak{g}}}
\newcommand{\hgot}{\ensuremath{\mathfrak{h}}}
\newcommand{\kgot}{\ensuremath{\mathfrak{k}}}
\newcommand{\lgot}{\ensuremath{\mathfrak{l}}}
\newcommand{\rgot}{\ensuremath{\mathfrak{r}}}
\newcommand{\pgot}{\ensuremath{\mathfrak{p}}}
\newcommand{\slgot}{\ensuremath{\mathfrak{sl}}}
\newcommand{\tgot}{\ensuremath{\mathfrak{t}}}
\newcommand{\ugot}{\ensuremath{\mathfrak{u}}}
\newcommand{\Rgot}{\ensuremath{\mathfrak{R}}}
\newcommand{\Acal}{\ensuremath{\mathcal{A}}}
\newcommand{\Ccal}{\ensuremath{\mathcal{C}}}
\newcommand{\Dcal}{\ensuremath{\mathcal{D}}}
\newcommand{\Hcal}{\ensuremath{\mathcal{H}}}
\newcommand{\Kcal}{\ensuremath{\mathcal{K}}}
\newcommand{\Lcal}{\ensuremath{\mathcal{L}}}
\newcommand{\Qcal}{\ensuremath{\mathcal{Q}}}
\newcommand{\Ocal}{\ensuremath{\mathcal{O}}}
\newcommand{\Xcal}{\ensuremath{\mathcal{X}}}
\newcommand{\Ucal}{\ensuremath{\mathcal{U}}}
\newcommand{\Vcal}{\ensuremath{\mathcal{V}}}
\newcommand{\Z}{\ensuremath{\mathbb{Z}}}
\newcommand{\C}{\ensuremath{\mathbb{C}}}
\newcommand{\R}{\ensuremath{\mathbb{R}}}
\newcommand{\N}{\ensuremath{\mathbb{N}}}
\newcommand{\h}{{\ensuremath{\rm h}}}
\newcommand{\ad}{{\ensuremath{\rm ad}}}
\newcommand{\As}{{\ensuremath{\rm As}}}
\newcommand{\AS}{{\ensuremath{\rm AS}}}
\newcommand{\f}{\ensuremath{\mathcal{C}^{\infty}}}
\newcommand{\fgene}{\ensuremath{\mathcal{C}^{-\infty}}}
\newcommand{\croc}{\ensuremath{\hookrightarrow}}
\newcommand{\indice}{\ensuremath{\hbox{\rm Index}}}
\newcommand{\mm}{\ensuremath{\hbox{\rm m}}}
\newcommand{\Cr}{\ensuremath{\hbox{\rm Cr}}}
\newcommand{\Vol}{\ensuremath{\hbox{\rm vol}}}
\newcommand{\T}{\ensuremath{\hbox{\bf T}}}
\newcommand{\Tr}{\ensuremath{\hbox{\rm Tr}}}
\newcommand{\K}{\ensuremath{\hbox{\bf K}}}
\newcommand{\Char}{\ensuremath{\hbox{\rm Char}}}
\newcommand{\End}{\ensuremath{\hbox{\rm End}}}
\newcommand{\Thom}{\ensuremath{\hbox{\rm Thom}}}
\newcommand{\spp}{\ensuremath{\underline{\perp}}}
\newcommand{\qfor}{\ensuremath{\mathcal{Q}^{-\infty}}}
\newcommand{\Rfor}{\ensuremath{R^{-\infty}}}
\newcommand{\Rforc}{\ensuremath{R^{-\infty}_{tc}}}
\newcommand{\Chol}{\ensuremath{\Ccal_{\rm hol}}}
\newcommand{\chol}{\ensuremath{\Ccal^{^{\geq}}_{\rm hol}}}
\def \what {\widehat}
\def \cay {{\rm \bf c}}
\def \U {{\rm U}}
\def \SU {{\rm SU}}
\def \S {{\rm S}}
\def \Clif {{\rm Cl}}
\def \indT {{\rm Ind}^{K}_{T}}
\def \indl {{\rm Ind}^{K}_{K_{\Lambda(\lambda)}}}
\def \indL {{\rm Ind}^{K}_{K_{\Lambda}}}
\def \HolT {{\rm Hol}^{K}_{T}}
\def \m {{\rm \tiny min}}
\title{Multiplicities of the discrete series}
\author{Paul-Emile  PARADAN}
\address{Institut de Math\'ematiques et Mod\'elisation de Montpellier (I3M), 
Universit\'e Montpellier 2} 
\email{Paul-Emile.Paradan@math.univ-montp2.fr}
\date{November 2008}
\begin{document}

\begin{abstract} 
The purpose of this paper is to show that the multiplicities of a discrete series 
representation relatively to a compact subgroup can be ``computed'' geometrically, 
in the way predicted by the ``qantization commutes with reduction'' principle of 
Guillemin-Sternberg.
\end{abstract}

\maketitle

{\def\thefootnote{\relax}
\footnote{{\em Keywords} : moment map, reduction, geometric quantization, 
discrete series, transversally elliptic symbol.\\
{\em 1991 Mathematics Subject Classification} : 58F06, 57S15, 19L47.}
\addtocounter{footnote}{-1}}

{\small
\tableofcontents
}

%%%%%%%%%%%%%%%%%%%%%%%%%%%%%%%%%%%%%%%%%%%%%%%%%%%%%%%%%%%%%%%%%%%%%%%%%%%%%%%%%%

\section{Introduction and statement of the results}

In the 50's, Harish Chandra has constructed the holomorphic discrete series representations of
a real semi-simple Lie group $G$ as quantization of certain {\em rigged} elliptic orbits, 
in a way similar to the Borel-Weil Theorem. Here the quantization procedure is the one that 
Kirillov calls {\em geometric} in \cite{Kirillov99}.

The first purpose of this paper is to explain how one can ``compute'' geometrically the mutiplicities 
of a holomorphic representation of a real simple Lie group relatively to a compact connected 
subgroup : our main result is Theorem \ref{theo:intro}. This computation follows the line of the 
orbit method \cite{Kirillov99} and is a non-compact example of 
the ``quantization commutes with reduction'' phenomenon 
\cite{Guillemin-Sternberg82,Meinrenken-Sjamaar,pep-RR,Tian-Zhang98}.

Next we show  that this result extends to the discrete series representations  for which 
the Harish-Chandra and Blattner parameters belong to the same chamber of strongly elliptic elements. 
See Theorem \ref{theo:intro-deux}.

In our previous article \cite{pep-ENS}, we prove that a similar result occurs : 
the multiplicities of any discrete series representation relatively  
to a maximal compact subgroup can be ``computed'' geometrically. 

Nevertheless, our present contribution is not a consequence of the results 
of \cite{pep-ENS}, for two reasons:
\begin{enumerate}
\item In \cite{pep-ENS}, we were working with the {\em metaplectic version} 
of the quantization (we prefer the denomination ``Spin'' quantization). 
Here we work with the {\em geometric version} of the quantization. See the review of 
Vogan \cite{vogan-review} for a brief explanation concerning this 
two kinds of quantization.

\item The other main difference with \cite{pep-ENS} is that here we look at 
the multiplicities relatively to {\em any compact connected subgroup}, subordinated to the 
condition that the multiplicities are finite. 
\end{enumerate}

Our main tool to investigate (2) is the ``formal geometric quantization'' 
procedure that we have studied in \cite{pep-formal}.

Finally, we mention that our present paper is strongly related to the works of 
Kobayashi \cite{TK1,TK2,TK3} and Duflo-Vargas \cite{Duflo-Vargas} where they 
study the general setting of restrictions of 
irreducible representations to a reductive subgroup.

\subsection{Realisation of the holomorphic discrete series}

Let $G$ be a connected real simple Lie group with finite center and let $K$ be a maximal compact subgroup. 
We make the choice of a maximal torus $T$ in $K$.  Let $\ggot$, $\kgot$, $\tgot$ 
be the Lie algebras of $G$, $K$, $T$. We consider the Cartan decomposition 
$\ggot=\kgot\oplus\pgot$.

We assume that $G$ admits holomorphic discrete series representations. It is the case if and only if 
the real vector space $\pgot$ admits a $K$-invariant complex structure, or equivalently,  if the center 
$Z(K)$ of $K$ is equal to the circle group : hence the complex structure on $\pgot$ is 
defined by the adjoint action of an element $z_o$ in the Lie algebra of $Z(K)$. 

Let $\wedge^*\subset\tgot^*$ be the weight lattice : $\alpha\in\wedge^*$ if $i\alpha$ is the 
differential of a character of $T$. Let $\Rgot\subset \wedge^*$ be the set of roots 
for the action of $T$ on $\ggot\otimes\C$. We have $\Rgot=\Rgot_c\cup \Rgot_n$ 
where $\Rgot_c$ and $\Rgot_n$ are respectively the set of roots for the action of $T$ on 
$\kgot\otimes\C$ and $\pgot\otimes\C$. We fix a system of positive roots $\Rgot^+_c$ in 
$\Rgot_c$. We have $\pgot\otimes\C=\pgot^+\oplus\pgot^-$ where the $K$-module $\pgot^\pm$ 
is equal to $\ker({\rm ad}(z_o)\mp i)$. 
Let $\Rgot^{\pm,z_o}_n$ be the set of roots for the action of $T$ 
on $\pgot^\pm$. The union $\Rgot^+_c\cup\Rgot^{+,z_o}_n$ defines 
then a system of positive roots in $\Rgot$ that we denote by $\Rgot^{+}_{\rm hol}$.

Let $\tgot^*_+\subset\tgot^*$ be the Weyl chamber defined by the system of positive roots 
$\Rgot^+_c$. Let  $\Chol:=\left\{ \xi\in\tgot^*_+\ \vert\ (\beta,\xi)>0, \ 
\forall \beta\in \Rgot^{+,z_o}_n\right\}$, 
where $(\cdot,\cdot)$ denotes the scalar product on $\tgot^*$ induced by the Killing form of $\ggot$. So 
the closure $\overline{\Chol}$ is the Weyl chamber defined by the system of 
positive roots $\Rgot^+_{\rm hol}$. 

The complex vector space $\pgot^+$ is an irreducible $K$-representation. Hence, if $\beta_\m$ 
is the lowest $T$-weight on $\pgot^+$, every weight $\beta\in\Rgot^{+,z_o}_n$ is of the form 
$\beta=\beta_\m+\sum_{\alpha\in\Rgot^+_c} n_\alpha \alpha$ with $n_\alpha\in\N$. Then we have 
\begin{equation}\label{eq:Chol} 
\Chol=\tgot^*_+\cap \{\xi\in\tgot^*\ \mid \ (\xi,\beta_\m)> 0\}.
\end{equation}
Note that every $\xi \in\Chol$ is {\em strongly elliptic}: the stabilizer subgroup $G_\xi$ is compact and coincides 
with the stabilizer subgroup $K_\xi$.

For every weight $\Lambda\in\wedge^*\cap \Chol$, we consider the coadjoint orbit 
$$
\Ocal_\Lambda:= G\cdot \Lambda\subset \ggot^*.
$$
 For $X\in\ggot$, let 
$VX$ be the vector field on $\Ocal_\Lambda$ defined by : $VX(\xi):=\frac{d}{dt} 
e^{-tX}\cdot \xi\vert_{t=0}$, $\xi\in \Ocal_\Lambda$. We have on the coadjoint 
orbit $\Ocal_\Lambda$ the following data :

\begin{enumerate}
\item The Kirillov-Kostant-Souriau symplectic form $\Omega_\Lambda$ which is 
defined by the relation : for any $X,Y\in\ggot$ and $\xi\in\Ocal_\Lambda$ we have 
$$
\Omega_\Lambda(VX,VY)\vert_\xi=\langle\xi,[X,Y]\rangle.
$$

\item The inclusion $\Phi_G:\Ocal_\Lambda\hookrightarrow \ggot^*$ is a moment map relative 
to the Hamiltonian action of $G$ on $(\Ocal_\Lambda,\Omega_\Lambda)$.

\item A $G$-invariant complex structure $J_\Lambda$ characterized by the 
following fact. The holomorphic tangent bundle 
$\T^{1,0}\Ocal_\Lambda\to \Ocal_\Lambda$ is equal, above 
$\Lambda\in\Ocal_\Lambda$, to the $T$-module
$$
\sum_{\substack{\alpha\in \Rgot_c^+\\ \langle\alpha,\Lambda\rangle\neq 0}}\ggot_\alpha  +
\underbrace{\sum_{\beta\in \Rgot_n^-}\ggot_\beta}_{\pgot^-} \  .
$$

\item The line bundle $\Lcal_\Lambda:=G\times_{K_\Lambda}\C_\Lambda$ over 
$G/K_\Lambda\simeq \Ocal_\Lambda$ with its canonical holomorphic structure. Here $\C_\Lambda$ 
is the one dimensional representation of the stabilizer subgroup $K_\Lambda$ attached to the 
weight $\Lambda\in\wedge^*$.
\end{enumerate}

\medskip

One can check that the complex structure $J_\Lambda$ is positive relatively to the symplectic form, e.g. 
$\Omega_\Lambda(-,J_\Lambda-)$ defines a Riemannian metric on $\Ocal_\Lambda$. Hence 
$(\Ocal_\Lambda,\Omega_\Lambda,J_\Lambda)$ is a  K\"ahler manifold.  Moreover the first Chern 
class of $\Lcal_\Lambda$ is equal to $\bigl[\frac{\Omega_\Lambda}{2\pi}\bigr]$ : the line bundle 
$\Lcal_\Lambda$ is an equivariant {\em pre-quantum} line bundle over $(\Ocal_\Lambda,\Omega_\Lambda)$ 
\cite{Kostant70,Souriau}.

\medskip

We are interested in the geometric quantization of the coadjoint orbits $\Ocal_\Lambda$,  
$\Lambda\in\wedge^*\cap \Chol$. We take on $\Ocal_\Lambda$ the invariant volume form defined 
by its symplectic structure. The line bundle $\Lcal_\Lambda$ is equipped with a $G$-invariant 
Hermitian metric (which is unique up to a multiplicative constant).

\begin{defi}\label{def:quant-lambda} We denote $\Qcal_G(\Ocal_\Lambda)$ 
the Hilbert space of square integrable holomorphic sections of the line bundle  
$\Lcal_\Lambda\to\Ocal_\Lambda$.
\end{defi}

The irreducible representations of $K$ are parametrized by the set of dominant weights, that we denote
$$
\what{K}:=\wedge^*\cap \tgot^*_+.
$$
For any $\mu\in\what{K}$, we denote $V^K_\mu$ the irreducible representation of 
$K$ with highest weight $\mu$.

Let $\rho_n$ be half the sum of the elements of $\Rgot^{+,z_o}_n$. 
Let $S(\pgot^+)$ be the symmetric algebra of the vector space $\pgot^+$ :  
it is an admissible representation  of $K$ since the center $Z(K)$ acts on $\pgot^+$ 
as the rotation group.

The following theorem is due to  Harish Chandra \cite{Harish-Chandra55-56}. See also the nice exposition 
\cite{Knapp-hol}.

\begin{theo}\label{theo:H-C}
Let $\Lambda\in\wedge^*\cap \Chol$. Then 
\begin{itemize}
\item If $(\Lambda,\beta_\m)<2(\rho_n,\beta_\m)$, the Hilbert space $\Qcal_G(\Ocal_\Lambda)$ 
is reduced to $\{0\}$.\\

\item If $(\Lambda,\beta_\m)\geq 2(\rho_n,\beta_\m)$, the Hilbert space $\Qcal_G(\Ocal_\Lambda)$ 
is an irreducible representation of $G$ such that the subspace of $K$-finite vectors is isomorphic 
to $V^K_\Lambda\otimes S(\pgot^+)$.
\end{itemize}
\end{theo}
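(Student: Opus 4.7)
The strategy is to reduce $\Qcal_G(\Ocal_\Lambda)$ to a weighted Bergman space on the Harish-Chandra bounded symmetric domain and analyse integrability there.

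First, I would use the holomorphic fibration $\pi\colon\Ocal_\Lambda=G/K_\Lambda\to G/K$, whose fibres $K/K_\Lambda$ are compact complex manifolds (this is consistent with the description of $\T^{1,0}\Ocal_\Lambda$ in the excerpt: the compact-root summand is tangent to the fibre, while $\pgot^-$ is the holomorphic part of the transversal direction). Since $\Lambda\in\overline{\tgot^*_+}$, Borel--Weil applied to the fibre identifies $\pi_*\Lcal_\Lambda$ with the homogeneous holomorphic vector bundle $\widetilde{\Ecal}:=G\times_K V^K_\Lambda\to G/K$; a partial-Fourier / fibrewise-orthogonality argument then yields an isometry between $\Qcal_G(\Ocal_\Lambda)$ and the Hilbert space of $L^2$-holomorphic sections of $\widetilde{\Ecal}$.

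Second, I would apply the Harish-Chandra realisation coming from $\ggot_\C=\pgot^+\oplus\kgot_\C\oplus\pgot^-$ and the open embedding $G\subset P^-K_\C P^+\subset G_\C$ (with $P^\pm=\exp(\pgot^\pm)$), realising $G/K$ biholomorphically as a bounded circular domain $\Dcal\subset\pgot^-$ on which $\widetilde{\Ecal}$ is holomorphically trivial. Combined with the Hermitian metric on $\Lcal_\Lambda$ and the symplectic volume, this produces an explicit $K$-invariant radial weight $h_\Lambda$ on $\Dcal$, and $\Qcal_G(\Ocal_\Lambda)$ becomes the weighted Bergman space of holomorphic $V^K_\Lambda$-valued functions $f$ on $\Dcal$ with $\int_\Dcal \|f(\zeta)\|^2 h_\Lambda(\zeta)\,d\zeta<\infty$.

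The third step is the main obstacle. Taylor-expanding $f$ at the origin, circular invariance of $h_\Lambda$ makes homogeneous components pairwise orthogonal, and the space of $V^K_\Lambda$-valued holomorphic polynomials on $\Dcal$ is identified, via the Killing form $(\pgot^-)^*\cong\pgot^+$, with $V^K_\Lambda\otimes S(\pgot^+)$ carrying its natural $K$-action. The analytic question is then for which $\Lambda$ each homogeneous component lies in $L^2(\Dcal,h_\Lambda)$: a beta-function computation on the polydisc cut out of $\Dcal$ by a maximal set of strongly orthogonal non-compact roots shows that the $n$-th power in the direction of a root $\beta\in\Rgot^{+,z_o}_n$ is integrable iff an expression affine in $n$ and $(\Lambda-2\rho_n,\beta)$ stays positive, with limiting condition $(\Lambda-2\rho_n,\beta)\geq 0$. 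By \eqref{eq:Chol}, the worst such direction is $\beta_\m$, yielding precisely the threshold $(\Lambda,\beta_\m)\geq 2(\rho_n,\beta_\m)$; below it, even constant sections fail to be $L^2$, so $\Qcal_G(\Ocal_\Lambda)=\{0\}$.

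Finally, above the threshold, $V^K_\Lambda\otimes 1$ is the lowest $K$-type and carries the extremal $T$-weight $\Lambda$ for $\Rgot^+_{\rm hol}$. By the PBW theorem and the abelianness of $\pgot^\pm$, the $\Ucal(\ggot)$-orbit of $V^K_\Lambda\otimes 1$ exhausts $V^K_\Lambda\otimes S(\pgot^+)$, which coincides with the full $K$-finite part by the Bergman expansion. Any nonzero $(\ggot,K)$-submodule must contain this unique lowest $K$-type, hence equals the whole module; irreducibility follows.
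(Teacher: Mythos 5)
The paper does not prove this statement: Theorem \ref{theo:H-C} is quoted as a classical result of Harish-Chandra, with \cite{Harish-Chandra55-56} and the exposition \cite{Knapp-hol} cited in lieu of an argument, so there is no in-paper proof to compare yours against. What you have written is, in substance, the classical proof from those references: fibre $\Ocal_\Lambda=G/K_\Lambda$ holomorphically over $G/K$ and apply Borel--Weil on the compact fibre $K/K_\Lambda$ (your reading of the complex structure is right --- the compact-root summand of $\T^{1,0}\Ocal_\Lambda$ is tangent to the fibre and contributes $V^K_\Lambda$, while $\pgot^-$ is transversal); pass to the Harish-Chandra bounded realization $\Dcal\subset\pgot^-$ to get a weighted Bergman space of $V^K_\Lambda$-valued holomorphic functions; identify the $K$-finite vectors with $V^K_\Lambda\otimes S(\pgot^+)$ by Taylor expansion at the origin using $(\pgot^-)^*\simeq\pgot^+$; and settle the $L^2$-threshold by restricting the weight to the polydisc cut out by the strongly orthogonal roots. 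The threshold you extract, $(\Lambda-2\rho_n,\beta_\m)\geq 0$, is the correct one: the strict Bergman-integrability inequality and the non-strict condition on the lattice coincide because $\Lambda$ is integral (exactly as for $\SU(1,1)$, where the exponent condition $n-2>-1$ becomes $n\geq 2$), and $\beta_\m$ is indeed the extremal direction by the description (\ref{eq:Chol}).

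The one step you should tighten is irreducibility. ``PBW plus abelianness of $\pgot^+$'' shows $\Ucal(\ggot)(V^K_\Lambda\otimes 1)=S(\pgot^+)\cdot(V^K_\Lambda\otimes 1)$, i.e.\ that the module is a lowest weight module generated by its lowest $K$-type; by itself this does not force every nonzero $(\ggot,K)$-submodule to contain that $K$-type --- reducible generalized Verma modules are also generated by their lowest $K$-type and have proper nonzero submodules. The mechanism you need is unitarity together with the explicit form of the $\pgot^-$-action: in the bounded realization $P^-=\exp(\pgot^-)$ acts by translations with trivial automorphy factor, so $\pgot^-$ acts by directional derivatives and its simultaneous kernel on $K$-finite vectors is exactly the constants $V^K_\Lambda\otimes 1$. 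A nonzero closed invariant subspace contains a nonzero $K$-finite vector of minimal $z_o$-degree, which is therefore $\pgot^-$-null, hence constant, hence (by $K$-irreducibility of $V^K_\Lambda$) the subspace contains all of $V^K_\Lambda\otimes 1$; applying the same reasoning to the orthogonal complement of the closed span of $\Ucal(\ggot)(V^K_\Lambda\otimes 1)$ (invariant because the representation is unitary) shows that span is dense, and irreducibility follows. With that spelled out, your plan is a faithful account of the standard proof.
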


The holomorphic discrete series representations of $G$ are those of the form 
$\Qcal_G(\Ocal_\Lambda)$, for  $\Lambda\in \what{K}\cap \chol$ where  
\begin{equation}\label{eq:Blattner-par}
\chol=\big\{\xi\in\tgot^*_+\ \mid \ (\xi-2\rho_n,\beta_\m)\geq 0\big\}.
\end{equation}

Here, we have parametrized the holomorphic discrete series representations \break 
$\Qcal_G(\Ocal_\Lambda)$ by their 
{\em Blattner parameter} $\Lambda\in\what{K}\cap \chol$. The corresponding 
{\em Harish-Chandra parameter} is 
$\lambda:=\Lambda+\rho_c-\rho_n$, where $\rho_c$ is half the sum of the elements of 
$\Rgot^+_c$. One checks that the map $\Lambda\mapsto \Lambda+\rho_c-\rho_n$ is a one to one map
between $\what{K}\cap \chol$ and 
\begin{equation}\label{eq:G-hat-hol}
\widehat{G}_{\rm hol}:=\{\lambda\in\tgot^*\ \vert\ (\lambda,\alpha)>0\  
\forall \alpha\in \Rgot^+_{\rm hol}\ {\rm and} 
\ \lambda+\rho_n+\rho_c\in\wedge^*\}.
\end{equation}

\medskip

\begin{exam}\label{ex:sp(2)}
Let us consider the case of the symplectic group $G={\rm Sp}(2, \R)$. 
Here $K$ is the unitary group $\U(2)$, and the maximal torus is of dimension $2$. 
In the figure \ref{figure-SP4}, we draw the chambers $\chol\subset\Chol\subset\tgot^*_+$, 
$\alpha$ is the unique positive compact root, and $\beta_1,\beta_2, \beta_3$ 
are the positive non-compact roots. The root $\beta_3$ corresponds to the 
root $\beta_\m$ used in (\ref{eq:Chol}).
\end{exam}

\begin{figure}[h]\label{figure-SP4}
\centerline{
\scalebox{0.4}{\input{SP4.pstex_t}}
}
\caption{The case of ${\rm Sp}(2,\R)$}
\end{figure}

%%%%%%%%%%%%%%%%%%%%%%%%%%%%%%%%%%%%%%%%%%%%%%%%%%%%%%%%%%%%%%%%%%%%%%%%%%%%%%%%%%%%%%%%%%%%%%
\subsection{Main results concerning the holomorphic discrete series}\label{sec:intro-hol-DS}
%%%%%%%%%%%%%%%%%%%%%%%%%%%%%%%%%%%%%%%%%%%%%%%%%%%%%%%%%%%%%%%%%%%%%%%%%%%%%%%%%%%%%%%%%%%%%%

Let $H\subset K$ be a compact connected Lie group with Lie algebra $\hgot$. 
The $H$-action on $(\Ocal_\Lambda,\Omega_\Lambda)$ 
is Hamiltonian with moment map $\Phi_H:\Ocal_\Lambda\to\hgot^*$ equal to the composition  of 
$\Phi_G:\Ocal_\Lambda\to\ggot^*$ with the projection $\ggot^*\to\hgot^*$.

\begin{notation} We denote $\Qcal_H(\Ocal_\Lambda)$ the (dense) vector subspace 
of $\Qcal_G(\Ocal_\Lambda)$ formed by the $H$-finite vectors.
\end{notation}

When $\Lambda\in\chol$, we know thanks to  Theorem \ref{theo:H-C}, that $\Qcal_H(\Ocal_\Lambda)$ is the ``restriction'' of the $K$-representation $V^K_\Lambda\otimes S(\pgot^+)$:  we will also denote it as 
$V^K_\Lambda\otimes S(\pgot^+)\vert_H$. We are interested in the case where the $H$-multiplicities in 
$V^K_\Lambda\otimes S(\pgot^+)\vert_H$ are finite, e.g. $V^K_\Lambda\otimes S(\pgot^+)\vert_H$
is $H$-{\em admisssible}. 

The {\em asymptotic $K$-support} of a $K$-representation $E$ is the closed 
cone of $\tgot^*_+$, denoted by $\AS_K(E)$, formed 
by the limits $\lim_{n\to\infty}\epsilon_n\mu_n$, 
where $(\epsilon_n)_{n\in\N}$ is a sequence of non-negative real numbers converging to $0$ and 
$(\mu_n)_{n\in\N}$ is a sequence of $\what{K}$ such that $\hom_K(V_{\mu_n}^K,E)\neq 0$ for 
all $n\in\N$.

For any closed subgroup $H$ of $K$, we denote $\hgot^\perp\subset\kgot^*$ the orthogonal for the duality 
of the Lie algebra of $H$. We have the following result of T. Kobayashi. 

\begin{prop}[\cite{TK1,TK4}]\label{prop:Toshi}
Let $E$ be an admissible $K$-representation. Let $H$ be a compact subgroup of $K$.
Then the following two conditions are equivalent:
\begin{enumerate}
\item $E\vert_H$ is $H$-admissible.
\item $\AS_K(E)\cap K\cdot \hgot^\perp=\{0\}$.
\end{enumerate}
\end{prop}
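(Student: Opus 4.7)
The plan is to prove both implications by contraposition, with the common technical input being the precise link between the branching $V_\mu^K|_H \simeq \bigoplus_\nu [V_\mu^K|_H:V_\nu^H]\,V_\nu^H$ and the $H$-moment polytope $\Delta_H(\mu):=p(K\cdot\mu)\cap\hgot^*_+$, where $p:\kgot^*\to\hgot^*$ denotes the canonical projection dual to $\hgot\hookrightarrow\kgot$. From Heckman/Kirwan convexity and the asymptotic $[Q,R]=0$ principle applied to the $H$-action on the compact Hamiltonian space $K\cdot\mu$, I will use the following two statements: (a) if $[V_\mu^K|_H:V_\nu^H]\neq 0$, then the distance $d(\nu,\Delta_H(\mu))\leq C$ for a constant $C=C(K,H)$ independent of $\mu$ and $\nu$; and (b) if a compact set $\Omega\subset\hgot^*_+$ meets the relative interior of $\Delta_H(\mu)$, then $\Omega\cap\what H$ contains weights with non-vanishing multiplicity in $V_\mu^K|_H$ for $\mu$ sufficiently large.

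Direction $\neg(1)\Rightarrow\neg(2)$. Assume $E|_H$ is not $H$-admissible, and fix $\nu\in\what H$ with $\dim\hom_H(V_\nu^H,E)=\infty$. Extract a sequence $\mu_n\in\what K$ with $\|\mu_n\|\to\infty$, $\hom_K(V_{\mu_n}^K,E)\neq 0$, and $[V_{\mu_n}^K|_H:V_\nu^H]\neq 0$. By (a), one can pick $k_n\in K$ with $\|p(k_n\mu_n)\|\leq C+\|\nu\|$, bounded uniformly in $n$. Using an invariant inner product to split $\kgot^*=\hgot^*\oplus\hgot^\perp$, write $k_n\mu_n=p(k_n\mu_n)+\eta_n$ with $\eta_n\in\hgot^\perp$. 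Extract subsequences so that $k_n\to k$ in $K$ and $\mu_n/\|\mu_n\|\to\mu_\infty$ in the unit sphere of $\tgot^*_+$; then $\mu_\infty\in\AS_K(E)$ by definition, and dividing the decomposition by $\|\mu_n\|$ yields $k\cdot\mu_\infty=\lim\eta_n/\|\mu_n\|\in\hgot^\perp$, so that $\mu_\infty$ is a nonzero element of $\AS_K(E)\cap K\cdot\hgot^\perp$.

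Direction $\neg(2)\Rightarrow\neg(1)$. Assume there is $\xi\in\AS_K(E)\cap K\cdot\hgot^\perp$ with $\xi\neq 0$. Write $\xi=k_0\eta$ for some $\eta\in\hgot^\perp$, and extract $\mu_n\in\what K$ with $V_{\mu_n}^K\subset E$ and $\mu_n/\|\mu_n\|\to\xi$. Since $p(\eta)=0$ and $\eta=k_0^{-1}\xi\in K\cdot\xi$, we have $0\in p(K\cdot\xi)$ and hence $0\in\Delta_H(\xi)$. The rescaled polytopes $\|\mu_n\|^{-1}\Delta_H(\mu_n)$ converge to $\Delta_H(\xi)$ in Hausdorff distance, so fixing a compact $\Omega$ containing $0$ in its relative interior within the face of $\Delta_H(\xi)$ through $0$, the set $\Omega$ meets the relative interior of $\|\mu_n\|^{-1}\Delta_H(\mu_n)$ for $n$ large. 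Applying (b) to $\Omega$ (rescaled) produces, for each large $n$, some $\nu_n\in\what H$ of bounded norm with $[V_{\mu_n}^K|_H:V_{\nu_n}^H]\neq 0$. Since the set of $\what H$-weights of bounded norm is finite, pigeonhole gives $\nu\in\what H$ with $[V_{\mu_n}^K|_H:V_\nu^H]\neq 0$ for infinitely many $n$, whence $\dim\hom_H(V_\nu^H,E)=\infty$, contradicting $H$-admissibility.

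The main obstacle is direction $\neg(2)\Rightarrow\neg(1)$: the support condition $0\in\Delta_H(\xi)$ alone only produces a diffuse family of $\nu$'s, and one needs the quantitative assertion (b) to pin down a single $\nu$ appearing infinitely often. Establishing (b) is where the asymptotic $[Q,R]=0$ theory enters; extra care is required when $0$ lies on the boundary of $\Delta_H(\xi)$, forcing one to work along the relative interior of a face of the moment polytope rather than in an open neighborhood of $0$.
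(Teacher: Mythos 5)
You should first be aware that the paper contains no proof of Proposition \ref{prop:Toshi}: it is quoted from Kobayashi \cite{TK1,TK4}, and the author explicitly arranges matters so that the results he actually needs (Proposition \ref{prop:admissible}, Theorem \ref{theo:equivalence}) are proved without it, for the specific module $E=V_\Lambda^K\otimes S(\pgot^+)$, via formal geometric quantization, the GIT equivalence of Lemma \ref{lem:closed-orbit-p}, and the explicit computation of $\Delta_K(\pgot)$. So there is no in-paper proof to compare against, and I can only judge your argument on its own terms. Your direction $\neg(1)\Rightarrow\neg(2)$ is correct: your ingredient (a) holds with $C=0$, since $[Q,R]=0$ applied to the compact Hamiltonian $H$-manifold $K\cdot\mu$ gives $[V_\mu^K|_H:V_\nu^H]\neq 0\Rightarrow \nu\in p(K\cdot\mu)$ (this is exactly the mechanism of the paper's Lemma \ref{lem:C2impliqueC1}), and the limiting argument producing a nonzero element of $\AS_K(E)\cap K\cdot\hgot^\perp$ is sound. (Do record that $K$-admissibility of $E$ forces $\|\mu_n\|\to\infty$ along a subsequence, so that $\mu_\infty$ genuinely lies in $\AS_K(E)$.)

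The direction $\neg(2)\Rightarrow\neg(1)$ has a genuine gap, located exactly where you flag the "main obstacle". Hausdorff convergence $\|\mu_n\|^{-1}\Delta_H(\mu_n)\to\Delta_H(\xi)\ni 0$ only yields $\mathrm{dist}\bigl(0,\Delta_H(\mu_n)\bigr)=o(\|\mu_n\|)$, not a bound independent of $n$; when you "apply (b) to $\Omega$ rescaled", the window is $\|\mu_n\|\,\Omega$, so the weights $\nu_n$ you extract satisfy only $\|\nu_n\|=o(\|\mu_n\|)$ and may still tend to infinity, and the pigeonhole collapses. This cannot be repaired at the stated level of generality, because the implication $(1)\Rightarrow(2)$ is false for arbitrary admissible $K$-modules: take $K=\SU(2)\times\SU(2)$, $H$ the diagonal $\SU(2)$, and $E=\bigoplus_{n\geq 1}V_n\boxtimes V_{n+\lfloor\sqrt{n}\rfloor}$. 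Then $(1,1)\in\AS_K(E)\cap K\cdot\hgot^\perp$ (the two components have asymptotically equal norms, and $\hgot^\perp$ is the antidiagonal), yet Clebsch--Gordan shows $V_j^H$ occurs in $V_n\boxtimes V_{n+s_n}|_H$ only when $j\geq s_n=\lfloor\sqrt{n}\rfloor\to\infty$, so every $V_j^H$ has finite multiplicity and $E|_H$ is admissible. A correct proof of necessity must therefore use an extra input that pins the support of $E$ to its asymptotic cone at bounded distance --- for instance that $E$ is the $K$-type decomposition of an irreducible unitary representation of a reductive $G$ (the content of \cite{TK4}), or, in the paper's situation, that the support $\Lambda+\sum_k\N(\gamma_1+\cdots+\gamma_k)$ is a translated finitely generated monoid; the paper's own substitute for this step is the homogeneity of the quadratic moment map $\Phi_H$ on $\pgot$ (so non-properness is detected already at $\Phi_H^{-1}(0)$) combined with Lemma \ref{lem:propre}.
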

%The fact that $(2)\Longrightarrow (1)$ is proved in \cite{TK1}, and the proof of  $(2)\Longrightarrow (1)$  
%is announced in \cite{TK4}

Let $\{\gamma_1,\ldots,\gamma_r\}$ be a maximal family of strongly orthogonal roots 
(see Section \ref{sec:delta-K-p}). 
Schmid \cite{Schmid69} has shown that $S(\pgot^+)$ is a $K$-representation without multiplicity, and that 
the representation $V_\mu^K$ occurs in $S(\pgot^+)$ if and only if 
$$
\mu=\sum_{k=1}^r n_k(\gamma_1+\cdots +\gamma_k),\quad {\rm with}\quad n_k\in\N.
$$ 

Thus, we check easily that the asymptotic $K$-support of $V^K_\Lambda\otimes S(\pgot^+)$ is equal to
$$
\sum_{k=1}^r\R^{\geq 0}(\gamma_1+\cdots +\gamma_k).
$$

 The following proposition is proved in Sections \ref{sec:proper} and \ref{sec:delta-K-p} (see Theorem 
\ref{theo:equivalence}).

\begin{prop}\label{prop:admissible} 
Let $\Lambda\in\Chol$. The following statements are equivalent.
\begin{enumerate}
\item The representation $V^K_\Lambda\otimes S(\pgot^+)\vert_H$ is 
{\em admissible}.
\item We have $\sum_{k=1}^r\R^{\geq 0}(\gamma_1+\cdots +\gamma_k)\cap K\cdot\hgot^\perp=\{0\}$.
\item The map $\Phi_H:\Ocal_\Lambda\to\hgot^*$ is {\em proper}.
\end{enumerate}
\end{prop}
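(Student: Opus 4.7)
For $(1)\Leftrightarrow (2)$, the plan is to apply Kobayashi's Proposition to the admissible $K$-representation $E=V^K_\Lambda\otimes S(\pgot^+)$. Since $V^K_\Lambda$ is finite dimensional, $\AS_K(E)=\AS_K(S(\pgot^+))$, and Schmid's multiplicity-free decomposition of $S(\pgot^+)$ recalled above yields
\[
\AS_K(S(\pgot^+))\;=\;C\;:=\;\sum_{k=1}^r\R^{\geq 0}(\gamma_1+\cdots+\gamma_k).
\]
Kobayashi's criterion then translates $H$-admissibility of $E|_H$ directly into $C\cap K\cdot\hgot^\perp=\{0\}$, which is $(2)$.

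For $(2)\Leftrightarrow (3)$, the plan is to factor $\Phi_H=\pi_H\circ\Phi_K$, where $\Phi_K:\Ocal_\Lambda\to\kgot^*$ is the $K$-moment map and $\pi_H:\kgot^*\to\hgot^*$ is the natural restriction. Two inputs must be established in Section~\ref{sec:proper}: (a) $\Phi_K$ is proper, which follows from $\Lambda$ being strongly elliptic together with the K\"ahler structure on $\Ocal_\Lambda$; and (b) the moment polytope $\Delta(\Ocal_\Lambda):=\Phi_K(\Ocal_\Lambda)\cap\tgot^*_+$ is a closed convex polyhedral subset of $\tgot^*_+$ whose recession cone equals $C$. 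Granting (a) and (b), properness of $\Phi_H$ is equivalent, by the standard convex-geometric characterisation of properness for projections of closed convex sets combined with the $K$-equivariance of $\Phi_K$, to $\hgot^\perp\cap K\cdot C=\{0\}$; by $K$-invariance of $K\cdot C$, this is equivalent to $C\cap K\cdot\hgot^\perp=\{0\}$, which is $(2)$.

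The main obstacle is fact (b), the identification of the recession cone of $\Delta(\Ocal_\Lambda)$ with $C$. The strategy is to use the Cartan decomposition $G=K\exp(\agot_0)K_\Lambda$, where $\agot_0\subset\pgot$ is the maximal abelian subspace spanned by the real root vectors $E_{\gamma_k}+E_{-\gamma_k}$ attached to the strongly orthogonal roots $\gamma_1,\ldots,\gamma_r$. For $X\in\overline{\agot_0^+}$, expanding ${\rm Ad}(\exp X)\Lambda$ in the joint eigenspaces of ${\rm ad}(\agot_0)$ on $\ggot$ and projecting to $\tgot^*_+$ identifies the exponentially dominant directions as $\|X\|\to\infty$; using the strong orthogonality of the $\gamma_k$, these directions are exactly the partial sums $\gamma_1+\cdots+\gamma_k$, from which $C$ is recovered as the recession cone of $\Delta(\Ocal_\Lambda)$.
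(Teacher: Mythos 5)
Your route is genuinely different from the paper's, so let me treat the two halves separately.

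For $(1)\Leftrightarrow(2)$ your argument is correct, and it is precisely the shortcut the paper acknowledges and then deliberately avoids: Proposition \ref{prop:Toshi} (Kobayashi) together with Schmid's multiplicity--free decomposition of $S(\pgot^+)$ gives $\AS_K(V^K_\Lambda\otimes S(\pgot^+))=\sum_{k}\R^{\geq 0}(\gamma_1+\cdots+\gamma_k)$, hence $(1)\Leftrightarrow(2)$. The paper instead shows that $(1)$ is equivalent to $\Delta_K(\pgot)\cap K\cdot\hgot^\perp=\{0\}$ via the formal quantization identity $\qfor_K(\pgot)=S(\pgot^+)$, property \textbf{[P2]} and a GIT argument (Lemma \ref{lem:closed-orbit-p}), and then computes $\Delta_K(\pgot)$ by hand in Section \ref{sec:delta-K-p}. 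Your version buys brevity at the cost of importing Kobayashi's theorem as a black box; the paper's version is self-contained.

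For $(2)\Leftrightarrow(3)$ the convex-geometric reduction of properness of $\Phi_H=\pi\circ\Phi_K$ to $\As\bigl(\Delta_K(\Ocal_\Lambda)\bigr)\cap K\cdot\hgot^\perp=\{0\}$ is sound and matches Lemma \ref{lem:propre}, but your proof of the key input (b), namely $\As\bigl(\Delta_K(\Ocal_\Lambda)\bigr)=\sum_k\R^{\geq 0}(\gamma_1+\cdots+\gamma_k)$, has a genuine gap. First, the decomposition $G=K\exp(\agot_0)K_\Lambda$ is false in general: already for $\mathrm{Sp}(2,\R)$ with $\Lambda$ regular one has $K_\Lambda=T$ and $\dim\bigl(K\times\agot_0\times K_\Lambda\bigr)=8<10=\dim G$. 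The correct statement is $G=K\exp(\overline{\agot_0^+})K$, so the orbit is swept out by $\exp(X)\,k\cdot\Lambda$ with an intermediate $k\in K$ that your expansion ignores; analysing $\mathrm{Ad}(\exp X)\Lambda$ alone can give the inclusion $C\subseteq\As(\Delta_K(\Ocal_\Lambda))$ but not the reverse bound. Second, the claim that strong orthogonality alone identifies the asymptotically dominant directions with the partial sums $\gamma_1+\cdots+\gamma_k$ is exactly the hard combinatorial content: the weights of $\ad(\agot_0)$ on $\ggot$ are the restricted roots, and passing from these to the partial sums requires the Harish-Chandra--Moore description of $\Sigma'$ and the identification $\agot_+=\sum_k\R^{\geq 0}(X_1+\cdots+X_k)$ (Propositions \ref{prop:a-plus} and \ref{prop:appendixB-2}, proved in Appendix B via the Cayley transform). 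The paper sidesteps the direct asymptotic analysis of the orbit entirely: it proves $\qfor_K(\Ocal_\Lambda,k\Omega_\Lambda)=V^K_{k\Lambda}\otimes S(\pgot^+)=\qfor_K(K\cdot\Lambda\times\pgot,k\Omega'_\Lambda)$ for all $k\geq 1$, deduces $\Delta_K(\Ocal_\Lambda)=\Delta_K(K\cdot\Lambda\times\pgot)$ from Lemma \ref{lem:Delta-K-M}, and hence $\As(\Delta_K(\Ocal_\Lambda))=\Delta_K(\pgot)$ (Proposition \ref{prop:Delta-K-Ocal-Lambda}), reducing everything to the elementary quadratic moment map on $\pgot$, where $\Phi_K(\sum_k t_kX_k)=\sum_k t_k^2\,\gamma_k/\|\gamma_k\|^2$. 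Your plan is repairable, but as written step (b) does not go through.
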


We know from Proposition \ref{prop:Toshi} that (1) and (2) are equivalent, thus our main 
contribution is the equivalence with (3). Nevertheless our proof of Proposition 
\ref{prop:admissible} does not use directly the result of Proposition \ref{prop:Toshi}. 
We prove in Section \ref{sec:proper} that (1) and (3) are both equivalent to the condition 
$$
\Delta_K(\pgot)\cap K\cdot\hgot^\perp=\{0\},
$$
where $\Delta_K(\pgot)\subset\tgot^*_+$ is the Kirwan convex set associated to the Hamiltonian action of 
$K$ on $\pgot$. In Section \ref{sec:delta-K-p}, a direct computation gives that 
\begin{equation}\label{intro:delta-K-p}
\Delta_K(\pgot)=\sum_{k=1}^r\R^{\geq 0}(\gamma_1+\cdots +\gamma_k).
\end{equation} 

Another way to obtain (\ref{intro:delta-K-p}) is  by using the theorem of Schmid 
(which computes the $K$-multiplicities in 
$S(\pgot^+)$) together with the following fact: for any affine variety 
$\Xcal\subset \C^n$ which is invariant relative to the linear action of $K$ 
on $\C^n$, the Kirwan set $\Delta_K(\Xcal)$ is equal to the asymptotic $K$-support 
of the algebra $\C[\Xcal]$ of polynomial functions on $\Xcal$ (see the Appendix by Mumford 
in \cite{Ness84}).

\begin{exam}
Since the representation $\Qcal_{Z(K)}(\Ocal_\Lambda)$ is  admissible, the representation
$\Qcal_H(\Ocal_\Lambda)$ will be admissible for any subgroup $H$ containing $Z(K)$.
\end{exam}

The irreducible representations of the compact Lie group $H$ are parametrized by a set of 
dominant weights 
$\widehat{H}\subset\hgot^*$. For any $\mu\in \widehat{H}$, we denote $V^H_\mu$ the 
irreducible representation of $H$ with highest weight $\mu$.  

\medskip

We suppose now that the moment map $\Phi_H:\Ocal_\Lambda\to\hgot^*$ is proper, 
and one wants to compute the multiplicities of 
$\Qcal_H(\Ocal_\Lambda)$. 

If $\xi\in\hgot^*$ is a regular value of $\Phi_H$, the Marsden-Weinstein reduction 
$$
\left(\Ocal_\Lambda\right)_\xi:=\Phi_H^{-1}(H\cdot\xi)/H
$$
is a compact K\"ahler  orbifold. If moreover $\xi$ is integral, e.g. $\xi=\mu\in  \widehat{H}$, 
there exists 
a holomorphic line orbibundle $\Lcal(\mu)$ that prequantizes the symplectic orbifold 
$\left(\Ocal_\Lambda\right)_\mu$. In this situation, one defines the integer 
$$
\Qcal\left((\Ocal_\Lambda)_\mu\right)\in\Z,
$$
as the holomorphic Euler characteristic of $((\Ocal_\Lambda)_\mu,\Lcal(\mu))$. 

In the general case where $\mu$ is not necessarily a regular value of $\Phi_H$, 
$\Qcal\left((\Ocal_\Lambda)_\mu\right)\in\Z$ can still be defined (see \cite{Meinrenken-Sjamaar,pep-RR}). 
The integer $\Qcal\left((\Ocal_\Lambda)_\mu\right)$ only depends on the data 
$(\Ocal_\Lambda,\Lcal_\Lambda,J_\Lambda)$ in a small neighborhood of $\Phi^{-1}_H(\mu)$ : 
in particular $\Qcal\left((\Ocal_\Lambda)_\mu\right)$ 
vanishes when  $\mu$ does not belong to the image of $\Phi_H$.

\bigskip

Now we can state one of our main result.

\medskip

\begin{theo}\label{theo:intro}  Consider a holomorphic discrete series representation $\Qcal_G(\Ocal_\Lambda)$ 
with Blattner parameter $\Lambda\in \chol$. Let $H\subset K$ be a compact connected Lie group 
such that the representation $\Qcal_H(\Ocal_\Lambda)$ is admissible.  Then we have
$$
\Qcal_H(\Ocal_\Lambda)=\sum_{\mu\in\widehat{H}}\Qcal\left((\Ocal_\Lambda)_\mu\right)V^H_\mu.
$$
In other words, the multiplicity of $V^H_\mu$ in the holomorphic discrete series representation 
$\Qcal_G(\Ocal_\Lambda)$ is equal to $\Qcal\left((\Ocal_\Lambda)_\mu\right)$.
\end{theo}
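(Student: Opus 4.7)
The plan is to prove the equality as an identity in the ring of formal characters $R^{-\infty}(H)$. By Proposition \ref{prop:admissible}, the admissibility hypothesis is equivalent to the properness of the $H$-moment map $\Phi_H : \Ocal_\Lambda \to \hgot^*$, so the right-hand side
$$
\Qcal^{-\infty}_H(\Ocal_\Lambda) \;:=\; \sum_{\mu\in\widehat{H}} \Qcal\bigl((\Ocal_\Lambda)_\mu\bigr) V^H_\mu
$$
is well-defined in $R^{-\infty}(H)$ via the formal geometric quantization framework of \cite{pep-formal}; simultaneously, Theorem \ref{theo:H-C} identifies the left-hand side with $(V^K_\Lambda \otimes S(\pgot^+))\vert_H$, which is admissible by hypothesis and thus also lives in $R^{-\infty}(H)$. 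The goal is to match them.

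I would split the argument into two independent moves. The first is a reduction to the case $H = K$ via the functoriality
$$
\Qcal_K^{-\infty}(\Ocal_\Lambda)\big\vert_H \;=\; \Qcal_H^{-\infty}(\Ocal_\Lambda),
$$
valid whenever $\Phi_H$ is proper, which expresses that formal geometric quantization commutes with restriction to a subgroup. The second move treats the case $H=K$: the complex structure $J_\Lambda$ exhibits $\Ocal_\Lambda$ as the total space of the $K$-equivariant holomorphic vector bundle $K\times_{K_\Lambda}\pgot^-$ over the compact K\"ahler orbit $K\cdot\Lambda$, with $\Lcal_\Lambda$ pulled back from the zero section. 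Borel-Weil on the zero section gives $V^K_\Lambda$, polynomial quantization on the fiber $\pgot^-$ contributes $S(\pgot^+)$, and induction recovers $\Qcal_K(\Ocal_\Lambda) \cong V^K_\Lambda \otimes S(\pgot^+)$ in agreement with Theorem \ref{theo:H-C}. Matching this with $\sum_\nu \Qcal((\Ocal_\Lambda)_\nu) V^K_\nu$ is a fibered $[Q,R]=0$ statement: the $K$-reductions are compact K\"ahler orbifolds, fibered over flag reductions of $K\cdot\Lambda$ with symplectic reductions of $\pgot^-$ as fibers, so the Meinrenken-Sjamaar theorem applied slice by slice delivers the equality.

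The principal obstacle is the first move. The difficulty is that $K$-reductions and $H$-reductions are genuinely different geometric objects, and the branching multiplicities $[V^K_\nu\vert_H : V^H_\mu]$ do not term-by-term assemble into $\Qcal((\Ocal_\Lambda)_\mu)$: the naive series $\sum_\nu \Qcal((\Ocal_\Lambda)_\nu) [V^K_\nu\vert_H : V^H_\mu]$ is typically infinite. I expect the key technical input to be a shifting-trick argument on $\Ocal_\Lambda \times \overline{\Ocal^H_\mu}$ combined with a Witten-type deformation driven by the proper function $\Vert\Phi_H\Vert^2$, whose compact critical set localizes the formal index near $\Phi_H^{-1}(0)$ and its $H$-translates---precisely the control needed to pass from a potentially divergent branching sum to the finite contribution appearing in $\Qcal^{-\infty}_H(\Ocal_\Lambda)$.
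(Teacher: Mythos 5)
Your overall architecture coincides with the paper's: reduce to $H=K$ via the restriction property \textbf{[P2]} of formal geometric quantization (which the paper quotes from \cite{pep-formal} and extends to general morphisms in Appendix C), and then establish the single identity $\qfor_K(\Ocal_\Lambda)=V^K_\Lambda\otimes S(\pgot^+)$. Two remarks on the first move: the shifting-trick-plus-deformation machinery you anticipate is indeed the engine behind \textbf{[P2]}, but in this paper it is imported as a black box rather than reproved; and your worry that the branching series $\sum_\nu \Qcal((\Ocal_\Lambda)_\nu)\,[V^K_\nu\vert_H:V^H_\mu]$ is ``typically infinite'' is misplaced --- under the properness of $\Phi_H$ it is finite (this is exactly Lemma \ref{lem:C2impliqueC1}); the genuine difficulty in \textbf{[P2]} is showing the finite sum equals the quantization of the $H$-reduction.

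The genuine gap is in your treatment of the case $H=K$. Your ``fibered $[Q,R]=0$'' argument presupposes that the symplectic reductions $(\Ocal_\Lambda)_\nu$ decompose as fibrations over reductions of $K\cdot\Lambda$ with symplectic reductions of $\pgot^-$ as fibers. That would require the identification $\Ocal_\Lambda\simeq K\cdot\Lambda\times\pgot$ to be a \emph{symplectomorphism}, not merely a $K$-equivariant diffeomorphism --- and the paper explicitly leaves this as a conjecture (see the Remark following Proposition \ref{prop:Delta-K-Ocal-Lambda}); only the case of $K$-invariant $\Lambda$ is known, by McDuff. Moreover Meinrenken--Sjamaar applies to \emph{compact} manifolds, and there is no compact index here to which it could be applied globally; applying it ``slice by slice'' has no meaning without the symplectic product structure. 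The paper circumvents both problems by interposing a $K$-transversally elliptic symbol $\sigma_\Lambda$ (the Dolbeault symbol of $(\Ocal_\Lambda,J_\Lambda,\Lcal_\Lambda)$ pushed by the Hamiltonian vector field of $-\frac{1}{2}\|\Phi_K\|^2$, which is transversally elliptic because $\Cr(\|\Phi_K\|^2)=K\cdot\Lambda$ is compact). Its index is computed twice: a direct computation (Section \ref{sec:Q-direct}) uses homotopies of symbols --- which only require the complex structures and vector fields to agree near $K\cdot\Lambda\times\{0\}$, not a global symplectomorphism --- together with Atiyah's induction morphism to obtain $V^K_\Lambda\otimes S(\pgot^+)$; and the shifting trick on $\Ocal_\Lambda\times\overline{K\cdot\mu}$ with the deformation $\Phi_t$ (Section \ref{sec:deformation}, resting on the compactness statement of Appendix A and the localization Theorem \ref{theo:loc-index}) identifies the same index with $\qfor_K(\Ocal_\Lambda)$. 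Without this index-theoretic bridge, your matching step does not go through.
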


\bigskip

A question still remains. 
When $\mu\in\what{H}$ is a {\em regular} value of the moment map $\Phi_H$, Theorem \ref{theo:intro} says that 
the multiplicity ${\rm m}_\Lambda(\mu)$ of the irreducible representation $V_\mu^H$ in $\Qcal_H(\Ocal_\Lambda)$
is equal to the holomorphic Euler characteristic of line orbibundle $\Lcal(\mu)\to (\Ocal_\Lambda)_\mu$. Does 
the multiplicity ${\rm m}_\Lambda(\mu)$ coincides with the dimension of the vector space 
$$
H^0((\Ocal_\Lambda)_\mu,\Lcal(\mu))
$$
of holomorphic sections of $\Lcal(\mu)\to (\Ocal_\Lambda)_\mu$ ?

%%%%%%%%%%%%%%%%%%%%%%%%%%%%%%%%%%%%%%%%%%%%%%%%%%%%%%%%%%%%%%%%%%%%
\subsection{Main result concerning the discrete series}
%%%%%%%%%%%%%%%%%%%%%%%%%%%%%%%%%%%%%%%%%%%%%%%%%%%%%%%%%%%%%%%%%%%%

We work now with a real semi-simple Lie group $G$ such that a maximal torus 
$T$ in $K$ is a Cartan subgroup of $G$. We know then that $G$ has discrete series representations 
\cite{Harish-Chandra65-66}. Nevertheless, we do not assume that $G$ has {\em holomorphic} 
discrete series representations. 

Harish-Chandra parametrizes the discrete series representations of $G$ by a 
discrete subset $\widehat{G}_{d}$ of regular elements of 
the Weyl chamber $\tgot^*_+$ \cite{Harish-Chandra65-66}.  
He associates to any $\lambda\in \widehat{G}_{d}$ an irreducible, square integrable, unitary 
representation $\Hcal_{\lambda}$ of $G$ : $\lambda$ is the {\em Harish-Chandra parameter} of 
$\Hcal_{\lambda}$. The corresponding {\em Blattner parameter} of $\Hcal_{\lambda}$ is 
$$
\Lambda(\lambda):= \lambda-\rho_c +\rho_n(\lambda)\ \in\ \wedge^*,
$$
where $\rho_n(\lambda)$ is half the sum of the non-compact roots $\beta$ satisfying $(\beta,\lambda)>0$.

We work under the following condition
\begin{equation}\label{condition-lambda}
(\beta,\lambda)(\beta,\Lambda(\lambda))>0\quad {\rm for \ any}\quad \beta\in\Rgot_n.
\end{equation}

The set of strongly elliptic elements of the Weyl chamber $\tgot^*_+$ decomposes as an union 
$\Ccal_1\cup\cdots\cup\Ccal_r$ of connected component : each chamber $\Ccal_i$ corresponds to a choice of 
positive roots $\Rgot^{+,i}\subset\Rgot$ containing $\Rgot_c^+$. Condition (\ref{condition-lambda}) asks that 
$\lambda$ and $\Lambda(\lambda)$ belong to the same chamber $\Ccal_i$. 

When $G$ admits holomorphic discrete series, there is a particular chamber $\Chol$ of strongly elliptic elements such 
that the intersection $\widehat{G}_{d}\cap \Chol$ is equal to the subset $\widehat{G}_{\rm hol}$ defined in 
(\ref{eq:G-hat-hol}). We noticed already that the map $\lambda\mapsto \Lambda(\lambda)$ defines 
a one to  one map between $\widehat{G}_{d}\cap \Chol$ and $\widehat{K}\cap \chol$. In particular, 
any $\lambda\in \widehat{G}_{d}\cap \Chol$ satisfies Condition \ref{condition-lambda}. 
We give in Section \ref{section:condition-lambda} some examples where Condition (\ref{condition-lambda}) does not hold.

Let $\lambda \in \widehat{G}_{d}$ satisfying (\ref{condition-lambda}). The coadjoint orbit $\Ocal_{\Lambda(\lambda)}$ 
is pre-quantized by the line bundle $G\times_{K_{\Lambda(\lambda)}}\C_{\Lambda(\lambda)}$. One of the main difference 
with the  holomorphic case is that the orbit $\Ocal_{\Lambda(\lambda)}$ 
is equipped with an invariant  {\em almost complex structure} $J_{\Lambda(\lambda)}$, which is 
compatible with the symplectic form, but which is not {\em integrable} in general.

Let $H$ be a compact connected Lie subgroup of $K$. Suppose that the moment map,  
$\Phi_H:\Ocal_{\Lambda(\lambda)}\to\hgot^*$ corresponding to the Hamiltonian action of $H$ 
on $\Ocal_{\Lambda(\lambda)}$, is {\em proper}. The reduced 
spaces $(\Ocal_{\Lambda(\lambda)})_\mu:=\Phi_H^{-1}(H\cdot\mu)/H$ are in general {\em not K\"ahler}. 
Nevertheless, their geometric quantization $\Qcal\left((\Ocal_{\Lambda(\lambda)})_\mu\right)\in\Z$ 
are well defined as the index of a Dolbeault-Dirac operator (see \cite{Meinrenken-Sjamaar,pep-RR}).

The following theorem is proved in Section \ref{sec:disc-series}.

\begin{theo}\label{theo:intro-deux}  Consider a discrete series representation $\Hcal_\lambda$ 
with a Harish-Chandra parameter $\lambda\in \widehat{G}_{d}$ satisfying condition (\ref{condition-lambda}). 
Let $H\subset K$ be a compact connected Lie subgroup such that the moment map 
$\Phi_H:\Ocal_{\Lambda(\lambda)}\to\hgot^*$ is {\em proper}. Then

$\bullet$ the representation $\Hcal_\lambda\vert_ H$ is admissible,

$\bullet$ we have 
$$
\Hcal_\lambda\vert_ H=\sum_{\mu\in\widehat{H}}\Qcal\left((\Ocal_{\Lambda(\lambda)})_\mu\right)V^H_\mu.
$$
In other words, the multiplicity of $V^H_\mu$ in the discrete series representation 
$\Hcal_\lambda$ is equal to the quantization of the symplectic reduction $(\Ocal_{\Lambda(\lambda)})_\mu$.
\end{theo}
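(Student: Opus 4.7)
The plan is to split the theorem into two claims: (i) the restriction $\Hcal_\lambda|_H$ is $H$-admissible, and (ii) its $H$-isotypic decomposition is given by the formal geometric quantization $\Qcal^{-\infty}_H(\Ocal_{\Lambda(\lambda)}) := \sum_{\mu \in \widehat{H}} \Qcal\bigl((\Ocal_{\Lambda(\lambda)})_\mu\bigr)\, V^H_\mu$. For (i), discrete series are automatically $K$-admissible, so by Kobayashi's criterion (Proposition \ref{prop:Toshi}) it suffices to show $\AS_K(\Hcal_\lambda)\cap K\cdot\hgot^\perp=\{0\}$. Using the Blattner--Hecht--Schmid description of $\Hcal_\lambda|_K$, the asymptotic $K$-support of $\Hcal_\lambda$ can be identified with the Kirwan cone $\Delta_K(\pgot^+_\lambda)$, where $\pgot^+_\lambda := \sum_{\beta \in \Rgot_n,\,(\beta,\lambda)>0} \ggot_\beta$ plays the role of $\pgot^+$ relative to the chamber of $\lambda$. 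An argument parallel to that of Proposition \ref{prop:admissible} then shows that this transversality is equivalent to properness of $\Phi_H$, which is the standing hypothesis.

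For (ii), the central tool is the compatibility between formal geometric quantization and restriction to a compact subgroup with proper moment map, which is one of the main outcomes of \cite{pep-formal}:
\begin{equation*}
\Qcal^{-\infty}_H(\Ocal_{\Lambda(\lambda)}) \;=\; \Qcal^{-\infty}_K(\Ocal_{\Lambda(\lambda)})\big|_H .
\end{equation*}
This reduces the multiplicity formula to the case $H=K$, namely to the equality $\Qcal^{-\infty}_K(\Ocal_{\Lambda(\lambda)}) = \Hcal_\lambda|_K$ in $R^{-\infty}(K)$. Condition (\ref{condition-lambda}) enters crucially here: it forces $\lambda$ and $\Lambda(\lambda)$ to lie in a common chamber $\Ccal_i$ of strongly elliptic elements, so that the positive noncompact roots defining the almost complex structure $J_{\Lambda(\lambda)}$ coincide with those governing Blattner's formula for the $K$-multiplicities of $\Hcal_\lambda$. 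One then proceeds type-by-type, matching the quantization $\Qcal\bigl((\Ocal_{\Lambda(\lambda)})_\nu\bigr)$ of each $K$-reduction ($\nu\in\widehat{K}$) with the corresponding coefficient in Blattner's partition-function expansion, in the same spirit as the holomorphic proof of Theorem \ref{theo:intro}.

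The main obstacle is the non-integrability of $J_{\Lambda(\lambda)}$. The reduced spaces $(\Ocal_{\Lambda(\lambda)})_\nu$ are in general not Kähler, so their quantizations are only defined as indices of Dolbeault--Dirac operators attached to compatible almost complex structures, and one cannot appeal to Kodaira vanishing or to holomorphic techniques as in the proof of Theorem \ref{theo:intro}. The computation must instead rely on the localisation of a transversally elliptic symbol attached to $\Phi_K$ near the critical set of $\|\Phi_K\|^2$, together with an asymptotic analysis of its contributions. It is precisely condition (\ref{condition-lambda}) that ensures the noncompact strata contribute trivially to this localisation, so that only the central contribution — matching Blattner's formula — survives; without this hypothesis, spurious sign contributions from the strata at infinity prevent the type-by-type matching from going through.
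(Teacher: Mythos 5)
Your treatment of the multiplicity formula (the second bullet) is essentially the paper's: reduce to $H=K$ via the compatibility of formal quantization with restriction (property \textbf{[P2]} of Theorem \ref{theo:formal-pep}), and prove $\qfor_K(\Ocal_{\Lambda(\lambda)})=\Hcal_\lambda\vert_K$ by localizing a $K$-transversally elliptic symbol built from $(J_{\Lambda(\lambda)},\Lcal_{\Lambda(\lambda)})$ and $\|\Phi_K\|^2$, then matching the result against Blattner's formula. One correction of emphasis: the critical set of $\|\Phi_K\|^2$ on $\Ocal_{\Lambda(\lambda)}$ is the single compact orbit $K\cdot\Lambda(\lambda)$, so there are no ``noncompact strata'' whose contributions need to be killed. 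Condition (\ref{condition-lambda}) enters elsewhere: it makes the linear complex structure $J_\lambda$ on $\pgot$ invariant under $K_{\Lambda(\lambda)}$, it guarantees that the vector field generated by $\Lambda(\lambda)$ on $\pgot$ vanishes only at the origin (so the pushed symbol $\Thom^{\Lambda(\lambda)}(\pgot(\lambda)^-)$ is transversally elliptic), and, most importantly, it forces every $T$-weight $\beta$ of $\pgot(\lambda)^-$ to satisfy $(\beta,\Lambda(\lambda))<0$, so that the index of that symbol is $S(\pgot(\lambda)^+)$, i.e.\ the partition function $\bigl[\prod_{\beta\in\Rgot^{+,\lambda}_n}(1-t^\beta)\bigr]^{-1}_\lambda$ expanded in exactly the direction prescribed by Blattner's formula. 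Without (\ref{condition-lambda}) the geometric series would be expanded in the wrong direction for some noncompact roots and the identification with $\Hcal_\lambda\vert_K$ would fail at the critical orbit itself, not ``at infinity''.

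The genuine gap is in your part (i). You propose to prove admissibility of $\Hcal_\lambda\vert_H$ via Kobayashi's criterion by identifying $\AS_K(\Hcal_\lambda)$ with ``the Kirwan cone $\Delta_K(\pgot^+_\lambda)$'' and then running ``an argument parallel to Proposition \ref{prop:admissible}''. Outside the Hermitian case the subspace $\pgot(\lambda)^+=\sum_{(\beta,\lambda)>0}\ggot_\beta$ is only $T$-stable (and $K_{\Lambda(\lambda)}$-stable under (\ref{condition-lambda})), \emph{not} $K$-stable, so there is no Hamiltonian $K$-action on it and no Kirwan cone $\Delta_K(\pgot(\lambda)^+)$ to speak of; moreover the $K$-multiplicities of $\Hcal_\lambda$ are given by an alternating Weyl-group sum rather than by Schmid's multiplicity-free decomposition, so reading off $\AS_K(\Hcal_\lambda)$ is not routine, and the whole apparatus of Sections \ref{sec:proper} and \ref{sec:delta-K-p} (strongly orthogonal roots, the cascade, the explicit computation of $\Delta_K(\pgot)$) does not transfer. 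Indeed the paper explicitly lists the equivalence between properness of $\Phi_H$ and admissibility of $\Hcal_\lambda\vert_H$ as an open \emph{expectation} in this generality. Fortunately none of this is needed: once $\Hcal_\lambda\vert_K=\qfor_K(\Ocal_{\Lambda(\lambda)})$ is established, property \textbf{[P2]} already asserts that properness of $\Phi_H$ makes $\qfor_K(\Ocal_{\Lambda(\lambda)})$ $H$-admissible, so the first bullet of the theorem is a free byproduct of the second. You should drop the detour through Proposition \ref{prop:Toshi}.
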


Theorem \ref{theo:intro-deux} applies for (most of) the discrete series, but is less precise than the 
results described in Section \ref{sec:intro-hol-DS}. We {\em expect} that:
\begin{enumerate}
\item The properness of $\Phi_H:\Ocal_{\Lambda(\lambda)}\to\hgot^*$  should only depend of the chamber $\Ccal_i$ 
containing $\Lambda(\lambda)$.
\item The properness of the the moment map $\Phi_H:\Ocal_{\Lambda(\lambda)}\to\hgot^*$ 
should be equivalent to the admissibility of the restriction $\Hcal_\lambda\vert_H$.
\end{enumerate}

Duflo-Vargas \cite{Duflo-Vargas} have shown that the admissibility of the restriction $\Hcal_\lambda\vert_H$ is 
equivalent to the properness of the moment map $\Ocal_{\lambda}\to\hgot^*$.  Since we assume that 
$\Lambda(\lambda)$ and $\lambda$ belong to the same chamber, point $(1)$ induces point $(2)$. 

Something which is also lacking is an effective criterium which tells us when the map 
$\Phi_H:\Ocal_{\Lambda(\lambda)}\to\hgot^*$ is proper. See \cite{Duflo-Vargas} for some results 
in this direction.

\bigskip

{\bf Acknowledgments.}\ I am  grateful to Michel Duflo and Mich\`ele Vergne for
valuable comments and useful discussions on these topics.

\medskip

%%%%%%%%%%%%%%%%%%%%%%%%%%%%%%%%%%%%%%%%%%%%%%%%%%%%%%%%%%%%%
%%%%%%%%%%%%%%%%%%%%%%%%%%%%%%%%%%%%%%%%%%%%%%%%%%%%%%%%%%%%%
\section{Quantization commutes with reduction}\label{QR=0}
%%%%%%%%%%%%%%%%%%%%%%%%%%%%%%%%%%%%%%%%%%%%%%%%%%%%%%%%%%%%%
%%%%%%%%%%%%%%%%%%%%%%%%%%%%%%%%%%%%%%%%%%%%%%%%%%%%%%%%%%%%%

In this section, first we recall the ``quantization commutes with reduction''  phenomenon of 
Guillemin-Sternberg which was first proved by Meinrenken and \break Meinrenken-Sjamaar 
\cite{Meinrenken98,Meinrenken-Sjamaar}. Next we explain the functorial properties of the 
``formal geometric quantization'' of non-compact Hamiltonian manifolds \cite{pep-formal}.

%%%%%%%%%%%%%%%%%%%%%%%%%%%%%%%%%%%%%%%%%%%%%%%%%%%%%%%%%%%%%%%%%%%%%%%%%%%%%%%
\subsection{Quantization commutes with reduction: the compact case}
%%%%%%%%%%%%%%%%%%%%%%%%%%%%%%%%%%%%%%%%%%%%%%%%%%%%%%%%%%%%%%%%%%%%%%%%%%%%%%%

Let $M$ be a {\em compact} Hamiltonian $K$-manifold with symplectic form $\Omega$ 
and moment map $\Phi_K: M\to \kgot^{*}$ characterized by the relation 
\begin{equation}\label{eq:hamiltonian-action}
    \iota(VX)\Omega= -d\langle\Phi_K,X\rangle,\quad X\in\kgot, 
\end{equation}
where $VX$ is the vector field on $M$ 
generated by $X\in \kgot$.

Let $J$ be a $K$-invariant {\em almost} complex structure on $M$ which is assumed 
to be compatible with the symplectic form : $\Omega(-,J-)$ defines a Riemannian metric on $M$. 
We denote $RR^{^K}(M,-)$ 
the Riemann-Roch character defined by $J$. Let us recall the definition of this map. 

Let $E\to M$ be a complex $K$-vector bundle. The almost complex structure on $M$ 
gives the decomposition $\wedge \T^{*} M \otimes \C =\oplus_{i,j}\wedge^{i,j}\T^* M$
of the bundle of differential forms. Using Hermitian structure in the tangent 
bundle $\T M$ of $M$, and in the fibers of $E$, we define a 
Dolbeault-Dirac operator $\overline{\partial}_E+ 
\overline{\partial}^*_E
:\Acal^{0,even}(M,E)\to\Acal^{0,odd}(M,E)$,
where $\Acal^{i,j}(M,E):=\Gamma(M,\wedge^{i,j}\T^{*}M\otimes_{\C}E)$ 
is the space of $E$-valued forms of type $(i,j)$. The Riemann-Roch
character $RR^{^K}(M,E)$ is defined as the index of the elliptic
operator $\overline{\partial}_E+ \overline{\partial}^*_E$:
$$
RR^{^K}(M,E)= \indice^K_M(\overline{\partial}_E + \overline{\partial}^*_E)
$$
viewed as an element of $R(K)$, the character ring of $K$. 

\medskip

In the Kostant-Souriau framework, a Hamiltonian $K$-manifold
$(M,\Omega,\Phi_K)$ is pre-quantized if there is an equivariant
Hermitian line bundle $\Lcal$ with an invariant Hermitian connection
$\nabla$ such that
\begin{equation}\label{eq:kostant-L}
    L(X)-\nabla_{VX}=i\langle\Phi_K,X\rangle\quad \mathrm{and} \quad
    \nabla^2= -i\Omega,
\end{equation}
for every $X\in\kgot$. Here $L(X)$ is the infinitesimal action of $X\in\kgot$ on the sections 
of $\Lcal\to M$. $(\Lcal,\nabla)$ is also called a Kostant-Souriau line bundle. Remark that 
conditions (\ref{eq:kostant-L}) imply, via the equivariant
Bianchi formula, the relation (\ref{eq:hamiltonian-action}).

We will now recall the notion of geometric quantization.

\begin{defi}\label{def.geom-quant}
When $(M,\Omega,\Phi_K)$ is prequantized by a line bundle $\Lcal$, the geometric quantization of 
$M$ is defined as the index $RR^K(M,\Lcal)$ : we denote it 
$$
\Qcal_K(M,\Omega)\in R(K),
$$
\end{defi}

In order to simplify the notation, we will use also the notation $\Qcal_K(M)$ for the geometric quantization 
of $(M,\Omega,\Phi_K)$.

\begin{rem}\label{rem:euler}
Suppose that $(M,\Omega,J)$ is a {\em compact} K\"ahler manifold pre-quantized by a holomorphic 
line bundle $\Lcal$. Then  

$\bullet$ $\Qcal_K(M,\Omega)$ coincides with 
the holomorphic Euler characteristic of $(M,\Lcal)$,

$\bullet$ for $k\in\N$ large enough, $\Qcal_K(M,k\Omega)\in R(K)$ is equal to the $K$-module formed by the 
holomorphic sections of $\Lcal^{\otimes k}\to M$.
\end{rem}

One wants to compute the $K$-multiplicities of $\Qcal_K(M)$ 
in geometrical terms. A fundamental result of Marsden and Weinstein asserts that if 
$\xi\in\kgot^{*}$ is a regular value of the moment map $\Phi$, the reduced space 
(or symplectic quotient)
$$
M_{\xi}:=\Phi^{-1}_K(\xi)/K_{\xi}
$$ 
is an orbifold equipped with a 
symplectic structure $\Omega_{\xi}$. For any dominant 
weight $\mu\in \what{K}$ which is a regular value of 
$\Phi$, 
$$
\Lcal(\mu):=(\Lcal\vert_{\Phi_K{-1}(\mu)}\otimes \C_{-\mu})/K_\mu
$$
is a Kostant-Souriau line orbibundle over $(M_{\mu},\Omega_{\mu})$. 
The definition of the index of the Dolbeault-Dirac operator carries over
to the orbifold case, hence $\Qcal(M_{\mu})\in \Z$ makes sense as in Definition 
\ref{def.geom-quant}. In
\cite{Meinrenken-Sjamaar}, this is extended further to the case of
singular symplectic quotients, using partial (or shift)
de-singularization. So the integer $\Qcal(M_{\mu})\in \Z$ is well
defined for every $\mu\in \what{K}$ : in particular
$\Qcal(M_{\mu})=0$ if $\mu\notin\Phi_K(M)$.

The following theorem was conjectured by Guillemin-Sternberg
\cite{Guillemin-Sternberg82} and is known as ``quantization commutes
with reduction''
\cite{Meinrenken98,Meinrenken-Sjamaar,Tian-Zhang98,pep-RR}. For
complete references on the subject the reader should consult
\cite{Sjamaar96,Vergne-Bourbaki}.

\begin{theo}[Meinrenken, Meinrenken-Sjamaar] \label{theo:Q-R}
We have the following equality in $R(K)$ :
$$
\Qcal_K(M)=\sum_{\mu\in \what{K}}\Qcal(M_{\mu})\, V_{\mu}^{K}\ .
$$
\end{theo}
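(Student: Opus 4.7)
The plan is to prove Theorem \ref{theo:Q-R} by the shift trick combined with equivariant localization on the moment map, which is the approach that fits naturally with the transversally elliptic machinery used elsewhere in the paper.

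First I would reduce the statement to the single identity
$$
\bigl[\Qcal_K(M)\bigr]^K \;=\; \Qcal(M_0).
$$
Given $\mu\in\widehat{K}$, let $\overline{\Ocal_\mu}$ denote the coadjoint orbit $K\cdot\mu$ equipped with the opposite Kirillov-Kostant-Souriau structure, prequantized by the dual of the Borel-Weil line bundle. The product $M\times \overline{\Ocal_\mu}$ is a compact Hamiltonian $K$-manifold with moment map $\Phi_K-\mathrm{pr}_2$, and its symplectic reduction at $0$ coincides with $M_\mu$. Moreover, by Borel-Weil, $\Qcal_K(\overline{\Ocal_\mu})=(V^K_\mu)^*$, so that the multiplicity of $V^K_\mu$ in $\Qcal_K(M)$ equals $[\Qcal_K(M\times\overline{\Ocal_\mu})]^K$. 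The theorem is thus reduced to the $\mu=0$ case, applied to all such products.

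Next I would localize $\Qcal_K(M)\in R(K)$ near $\Phi_K^{-1}(0)$ using the Kirwan vector field. Identify $\kgot\simeq\kgot^*$ via an invariant inner product and set $\kappa_m=V\Phi_K(m)$. The Dolbeault-Dirac symbol $\sigma$ of $\overline\partial_\Lcal+\overline\partial_\Lcal^*$ can be deformed to the transversally elliptic symbol $\sigma_\kappa$ obtained by twisting the Clifford action by $\kappa$; the zero set of $\sigma_\kappa$ is $\mathrm{Cr}(\|\Phi_K\|^2)$, which admits the Kirwan decomposition
$$
\mathrm{Cr}(\|\Phi_K\|^2)=\bigsqcup_{\beta\in\Bcal} C_\beta,\qquad C_\beta\subset \Phi_K^{-1}(K\cdot\beta),
$$
indexed by a finite collection $\Bcal$ of elements of $\tgot^*_+$. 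By excision for transversally elliptic indices one gets a decomposition
$$
\Qcal_K(M)=\sum_{\beta\in\Bcal}\Qcal_K(M,\beta)
$$
where each summand is supported on a neighborhood of $C_\beta$.

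Then I would show that $[\Qcal_K(M,\beta)]^K=0$ for every $\beta\neq 0$. Near $C_\beta$ the normal form of the moment map together with the induction principle for transversally elliptic operators expresses $\Qcal_K(M,\beta)$ as an infinite sum of weights, each of which lies in a half-space $\{\xi\in\wedge^*\,|\,\langle\xi,\beta\rangle\ge c_\beta\}$ with $c_\beta>0$ — this is the key vanishing lemma, the analogue of the estimate used by Tian-Zhang and in the ``$[Q,R]=0$ for non-compact spaces'' theory. Consequently the invariant part of $\Qcal_K(M,\beta)$ vanishes, so only $\beta=0$ contributes.

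Finally I would identify $[\Qcal_K(M,0)]^K$ with $\Qcal(M_0)$. When $0$ is a regular value of $\Phi_K$, a tubular neighborhood of $\Phi_K^{-1}(0)$ in $M$ is $K$-equivariantly modelled on $K\times_{K_0}(\kgot_0^\perp \oplus \Phi_K^{-1}(0))$, and a direct computation of the $K$-invariant index of $\sigma_\kappa$ on this model produces precisely the Dolbeault-Dirac index on the orbifold $M_0=\Phi_K^{-1}(0)/K$ twisted by $\Lcal(0)$, namely $\Qcal(M_0)$. When $0$ is not a regular value, one replaces $M$ by the Meinrenken-Sjamaar shift desingularization $M\times\Ocal_{\epsilon\rho}$ for small generic $\epsilon$, applies the previous argument there, and shows that the right-hand side is independent of $\epsilon$. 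The main obstacle is precisely this last step — together with the vanishing lemma for $\beta\neq 0$ — since it requires a careful control of the transversally elliptic index on non-compact model spaces and a proof that the contribution of $\Phi_K^{-1}(0)$ is stable under shift desingularization. Everything else is essentially formal once these two ingredients are in place.
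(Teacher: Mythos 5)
The paper does not prove this theorem: it is quoted as the known ``quantization commutes with reduction'' result, with the proof delegated to Meinrenken, Meinrenken--Sjamaar, Tian--Zhang and \cite{pep-RR}. Your outline is therefore not competing with an argument in the text, but it reproduces, correctly and in the right order, the localization proof of \cite{pep-RR} --- which is exactly the machinery this paper redeploys in Sections \ref{sec:sigma-L}--\ref{sec:deformation} for the non-compact orbits: the shifting trick reducing the $\mu$-multiplicity to $[\Qcal_K(M\times\overline{K\cdot\mu})]^K$, the deformation of the Dolbeault--Dirac symbol by the Kirwan vector field, the decomposition of the index over the components of $\Cr(\parallel\Phi_K\parallel^2)$, the vanishing of the invariant part of the contributions with $\beta\neq 0$, and the identification of the $\beta=0$ piece with $\Qcal(M_0)$ (the content of Theorem \ref{theo:loc-index}, which the paper cites rather than proves). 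So the strategy is sound and matches the author's own route to this theorem. Be aware, though, that what you have written is an architecture, not a proof: the two steps you flag at the end --- the weight estimate showing $[\Qcal_K(M,\beta)]^K=0$ for $\beta\neq 0$, and the treatment of singular $0$ via shift desingularization together with its independence of $\epsilon$ --- are precisely where all the work lives (they occupy most of \cite{pep-RR} and \cite{Meinrenken-Sjamaar} respectively), and the half-space localization of weights near $C_\beta$ requires the full normal-form analysis along $\Phi_K^{-1}(K\cdot\beta)$, not just the induction principle. As a blind reconstruction of the proof strategy behind a cited theorem, this is accurate; as a self-contained proof it is incomplete exactly where you say it is.
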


%%%%%%%%%%%%%%%%% FORMAL  QUANT  %%%%%%%%%%%%%%%%%%%%%%%%%%%%%%%%%%%%%%%%%%%%%%%%%%%%%%%%%%
\subsection{Formal quantization of non-compact Hamiltonian manifolds}\label{sec.formal.quant}
%%%%%%%%%%%%%%%%%%%%%%%%%%%%%%%%%%%%%%%%%%%%%%%%%%%%%%%%%%%%%%%%%%%%%%%%%%%%%%%%%%%%%%%%%%%

Suppose now that $M$ is {\em non-compact} but that the moment map
$\Phi_K: M\to\kgot^*$ is assumed to be {\em proper} (we will simply say
``$M$ is proper"). In this situation the geometric
quantization of $M$ as an index of an elliptic operator is not well
defined. Nevertheless the integers $\Qcal(M_{\mu}),\mu\in \what{K}$
are well defined since the symplectic quotients $M_\mu$ are
{\em compact}.

\medskip

A representation $E$ of $K$ is  admissible if it has finite
$K$-multiplicities : \break $\dim(\hom_K(V_\mu^K,E))<\infty$ for
every $\mu\in\what{K}$.  Let $\Rfor(K)$  be the Grothendieck group 
associated to the $K$-admissible representations. We have an inclusion map $R(K)\croc \Rfor(K)$ 
and  $\Rfor(K)$ is canonically identify with $\hom_\Z(R(K),\Z)$. Moreover the tensor product induces an 
$R(K)$-module structure on $\Rfor(K)$ since $E\otimes V$ is an admissible
representation when $V$ and $E$ are, respectively, a finite dimensional
and an admissible representation of $K$.

\medskip

Following \cite{Weitsman,pep-formal}, we introduce the following
\begin{defi}\label{def:formal-quant}
Suppose that $(M,\Omega,\Phi_K)$ is {\em proper} Hamiltonian $K$-manifold prequantized by a line 
bundle $\Lcal$. The formal geometric quantization 
of $(M,\Omega)$ is the element of
$\Rfor(K)$ defined by
$$
\qfor_K(M,\Omega)=\sum_{\mu\in \what{K}}\Qcal(M_{\mu})\, V_{\mu}^{K}\ .
$$
\end{defi}

When the symplectic structure $\Omega$ is understood, we will write $\qfor_K(M)$ for the 
formal geometric quantization of $(M,\Omega,\Phi_K)$.

For a Hamiltonian $K$-manifold $M$ with proper moment map $\Phi_K$, the convexity Theorem 
\cite{Kirwan.84.bis,L-M-T-W} asserts that 
\begin{equation}\label{eq:kirwan-polytope}
\Delta_K(M):= \Phi_K(M)\cap\tgot^*_+
\end{equation}
is a convex rational polyhedron, that one calls the {\em Kirwan polyhedron}.

We will need the following lemma in the next sections.

\begin{lem}\label{lem:Delta-K-M}
Let $(M,\Omega_M)$ and $(N,\Omega_N)$ be two prequantized {\em proper} Hamiltonian $K$-manifold. 
Suppose that $\qfor_K(M,k\Omega_M)=\qfor_K(N,k\Omega_N)$ for any integer $k\geq 1$. Then 
$\Delta_K(M)=\Delta_K(N)$.
\end{lem}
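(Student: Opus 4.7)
The plan is to read off each Kirwan polyhedron from the family of supports of the formal quantizations $\qfor_K(\cdot,k\Omega)$ as $k$ varies. For any $k\geq 1$ the form $k\Omega_M$ is symplectic with moment map $k\Phi_K$ (hence Kirwan polyhedron $k\Delta_K(M)$) and is prequantized by $\Lcal_M^{\otimes k}$; by Definition \ref{def:formal-quant} the support
$$
S_k(M)\;:=\;\bigl\{\mu\in\what{K}\ \big|\ \Qcal\bigl((M,k\Omega_M)_\mu\bigr)\neq 0\bigr\}
$$
is contained in $k\,\Delta_K(M)\cap\what{K}$.

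The heart of the proof will be the converse density statement
$$
\Delta_K(M)\;=\;\overline{\bigcup_{k\geq 1}\tfrac{1}{k}S_k(M)}.
$$
Because $\Delta_K(M)$ is a rational convex polyhedron, its rational interior points are dense, so it suffices to show that each such $\nu$ satisfies $k\nu\in S_k(M)$ for some $k$. Fix $k_0\geq 1$ with $k_0\nu\in\what{K}$; for every $\ell\geq 1$ the reduction of $(M,\ell k_0\Omega_M)$ at the integral weight $\ell k_0\nu$ is the compact symplectic reduction $M_\nu=\Phi_K^{-1}(\nu)/K_\nu$ endowed with the rescaled form $\ell k_0\Omega_\nu$ and with the prequantum line orbi-bundle $\Lcal(k_0\nu)^{\otimes\ell}$. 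Kawasaki--Riemann--Roch (together with the shift de-singularization of \cite{Meinrenken-Sjamaar} when $\nu$ is a singular value of $\Phi_K$) expresses $\Qcal\bigl((M,\ell k_0\Omega_M)_{\ell k_0\nu}\bigr)$ as a quasi-polynomial in $\ell$ whose leading term is a positive multiple of the symplectic volume of $M_\nu$, hence non-zero for $\ell$ large enough. This gives $\ell k_0\nu\in S_{\ell k_0}(M)$ and proves the density claim.

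Combined with the hypothesis $\qfor_K(M,k\Omega_M)=\qfor_K(N,k\Omega_N)$ for every $k\geq 1$, which forces $S_k(M)=S_k(N)$, the density statement applied to both manifolds gives
$$
\Delta_K(M)\;=\;\overline{\bigcup_{k\geq 1}\tfrac{1}{k}S_k(M)}\;=\;\overline{\bigcup_{k\geq 1}\tfrac{1}{k}S_k(N)}\;=\;\Delta_K(N).
$$
The main obstacle is the non-vanishing step: one must guarantee that the leading term of the Hilbert quasi-polynomial $\ell\mapsto RR\bigl(M_\nu,\Lcal(k_0\nu)^{\otimes\ell}\bigr)$ is strictly positive. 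On the smooth locus this leading term is just the symplectic volume of $M_\nu$; at singular reduction levels it requires invoking asymptotic $[Q,R]=0$ together with the positivity of the prequantum line orbi-bundle on each stratum produced by the shift de-singularization, which is where the real technical work lies.
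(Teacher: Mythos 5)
Your proposal is correct and takes essentially the same route as the paper: Paradan likewise identifies the multiplicity of $V^K_{k\mu}$ in $\qfor_K(M,kR\,\Omega_M)$ with the quantization of the reduction at $\mu/R$ carrying the rescaled form, uses the Riemann--Roch asymptotics $\sim \mathrm{cst}\cdot k^{r}\,\mathrm{vol}$ (with positive leading coefficient given by the symplectic volume) to detect exactly which rational points $\mu/R$ lie in $\Delta_K(M)$, and concludes by density of such points in the Weyl chamber. The only cosmetic difference is that you organize the rescaling around a fixed rational interior point $\nu$ and are more explicit about the singular-value/orbifold caveat, which the paper leaves implicit.
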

\begin{proof} We check that 
for any $\mu\in\what{K}$ the multiplicity of $V_{k\mu}^K$ in $\qfor_K(M,k\Omega_M)$ is equal to 
$\Qcal(M_\mu,k\Omega_\mu)$. The Atiyah-Singer Riemann-Roch formula gives us the following estimate
$$
\Qcal(M_\mu,k\Omega_\mu)\sim {\rm cst}\  k^r \ {\rm vol}(M_\mu)
$$
when $k$ goes to infinity. Here ${\rm cst}>0$, $r=\dim M_\mu/2$ and ${\rm vol}(M_\mu)$ 
is the symplectic volume of $M_\mu$. Hence, the hypothesis ``$\qfor_K(M,k\Omega_M)=\qfor_K(N,k\Omega_N)$ 
for any integer $k\geq 1$'' implies that $\Phi_K(M)\cap\what{K}=\Phi_K(N)\cap\what{K}$. 

Take an integer $R\geq 1$. By considering the multiplicities of $V_{k\mu}^K$ in  
$\qfor_K(M,k R\,\Omega_M)$, we prove in the same way that  
$\Phi_K(M)\cap\frac{1}{R}\what{K}=\Phi_K(N)\cap\frac{1}{R}\what{K}$. 
Finally we get that 
$$
\Phi_K(M)\cap\big\{\frac{\mu}{R}\ \mid \mu\in\what{K}, R\geq 1\big\}=
\Phi_K(N)\cap\big\{\frac{\mu}{R}\ \mid \mu\in\what{K}, R\geq 1\big\}.
$$
The proof follows since $\big\{\frac{\mu}{R}\ \mid \mu\in\what{K}, R\geq 1\big\}$ is a dense subset of 
the Weyl chamber $\tgot^*_+$.
\end{proof}

\medskip

Let $\varphi: H\to K$ be a morphism between compact connected Lie groups. It induces a pull-back 
morphism $\varphi^*: R(K)\to R(H)$. We want to extend $\varphi^*$ to some elements of 
$\Rfor(K)$. For $\mu\in \what{H}$ and $\lambda\in\what{K}$, let $N^\lambda_\mu$ be the 
multiplicity of $V_\mu^H$ in  $\varphi^*V_\lambda^K$. Formally, the pull-back of 
$E=\sum_{\lambda\in\what{K}}a_\lambda V_\lambda^K$ by $\varphi$ is 
\begin{equation}\label{eq:b-mu}
\varphi^*  E=\sum_{\mu\in\what{H}}b_\mu V_\mu^H\quad {\rm with}\quad 
b_\mu=\sum_{\lambda\in\what{K}}a_\lambda N^\lambda_\mu.
\end{equation} 

\begin{defi}\label{def:admis}
Let $\varphi: H\to K$ be a morphism between compact connected Lie groups.
The element $E=\sum_{\lambda\in\what{K}}a_\lambda V_\lambda^K$ is $H$-admissible if 
for every $\mu\in\what{H}$, the set $\{\lambda\in\what{K}\ \vert a_\lambda N^\lambda_\mu\neq 0\}$ 
is finite. Then the pull-back $\varphi^*E\in \Rfor(H)$ is defined by (\ref{eq:b-mu}).
\end{defi}

The element $\varphi^*E\in \Rfor(H)$ is called the ``restriction'' of $E$ to 
$H$, and will be sometimes  simply denoted by $E\vert_H$.
 
We prove in \cite{pep-formal} the following functorial properties of 
the formal quantization process.

\begin{theo}\label{theo:formal-pep}
\begin{itemize}
\item[\textbf{[P1]}]Let $M_1$ and $M_2$ be respectively
pre-quantized \emph{proper} Hamiltonian $K_1$ and $K_2$-manifolds :
the product $M_1\times M_2$ is then a pre-quantized \emph{proper}
Hamiltonian $K_1\times K_2$-manifold. We have
\begin{equation}\label{eq:functor-produit-formal}
\qfor_{K_1\times K_2}(M_1\times M_2)=\qfor_{K_1}(M_1)\what{\otimes}\,
\qfor_{K_2}(M_2)
\end{equation}
in $\Rfor(K_1\times K_2)\simeq \Rfor(K_1)\what{\otimes} \Rfor(K_2)$.

\item[\textbf{[P2]}] Let $M$ be a pre-quantized \emph{proper} Hamiltonian $K$-manifold. 
Let $\varphi: H\to K$ be a morphism between compact connected Lie groups. Suppose that 
$M$ is still \emph{proper} as a Hamiltonian $H$-manifold. Then $\qfor_K(M)$ is $H$-admissible 
and we have the following equality in $\Rfor(H)$ :
$$
\qfor_K(M)\vert_H=\qfor_H(M).
$$

\item[\textbf{[P3]}] Let $N$ and $M$ be two pre-quantized
Hamiltonian $K$-manifolds where $N$ is \emph{compact} and $M$ is
\emph{proper}. The product $M\times N$ is then proper and we 
have the following equality in $\Rfor(K)$ :
\begin{equation}\label{eq:functor-produit-K}
\qfor_K(M\times N)=\qfor_K(M)\cdot\Qcal_K(N)
\end{equation}
\end{itemize}
\end{theo}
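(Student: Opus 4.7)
The three claims form a package: [P1] is a reduction-of-a-product computation, [P3] follows by combining [P1] with [P2] along the diagonal embedding $K\hookrightarrow K\times K$, and [P2] (compatibility with restriction) carries the main technical content.

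For [P1], the diagonal moment map $\Phi_{K_1}\oplus \Phi_{K_2}$ on $M_1\times M_2$ is proper because its two components are. For integral regular values $(\mu_1,\mu_2)$, the symplectic quotient factors as $(M_1\times M_2)_{(\mu_1,\mu_2)}=(M_1)_{\mu_1}\times (M_2)_{\mu_2}$, the prequantum line bundle is the external tensor product, and the Dolbeault-Dirac index is multiplicative on products of almost complex manifolds. After handling singular reductions via the shift trick of Meinrenken-Sjamaar, summing over $(\mu_1,\mu_2)\in \what{K}_1\times \what{K}_2$ identifies the two formal characters.

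For [P2], two things must be checked: that $\qfor_K(M)$ is $H$-admissible, and that its restriction agrees with $\qfor_H(M)$. Admissibility is a finiteness statement: for fixed $\nu\in \what{H}$, only finitely many $\mu\in\what{K}$ should satisfy $\Qcal(M_\mu)\, N^\mu_\nu\neq 0$. One combines the convexity theorem (so $\Qcal(M_\mu)\neq 0$ forces $\mu\in \Delta_K(M)$) with the branching rule $N^\mu_\nu\neq 0\Rightarrow \nu\in \Delta_H(K\cdot\mu)$, and uses the properness of $\Phi_H$ to confine the relevant $\mu$ to a compact region of $\Delta_K(M)$. For the equality of formal characters, my preferred approach is symplectic cutting with respect to $\Phi_H$: for each $\nu\in\what{H}$, one produces a compact prequantized Hamiltonian $K$-manifold $M^{cut}_\nu$ whose $K$-quantization agrees with $\qfor_K(M)$ on the multiplicities that contribute to $\nu$, and whose $H$-quantization agrees with $\qfor_H(M)$ in the same range. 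Applying Theorem \ref{theo:Q-R} to $M^{cut}_\nu$ from both viewpoints and equating the $V_\nu^H$-multiplicities then yields the desired identity.

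For [P3], the product $M\times N$ is automatically a proper Hamiltonian $K\times K$-manifold since $N$ is compact, so [P1] gives $\qfor_{K\times K}(M\times N)=\qfor_K(M)\,\what{\otimes}\, \Qcal_K(N)$. Applying [P2] to the diagonal $\varphi:K\croc K\times K$ (for which the pull-back of an external tensor product is the tensor product of representations, i.e.\ the character-ring product) yields (\ref{eq:functor-produit-K}); the admissibility along $\varphi$ is automatic because $\Qcal_K(N)$ is a finite-dimensional virtual representation. The main obstacle is thus [P2], and specifically the passage from the non-compact $M$ to a compact model that still captures the correct $H$-multiplicities. The admissibility part is a bookkeeping exercise with the two moment polytopes, but matching the formal characters requires the cutting or localization argument to be compatible with both the $K$- and $H$-actions simultaneously, which is delicate when $H\subsetneq K$ and the two moment polytopes do not align cleanly; Lemma \ref{lem:Delta-K-M} already hints that a scaling trick $\Omega \mapsto k\Omega$ can reduce certain comparisons to an asymptotic one, and a refinement of that idea is likely needed.
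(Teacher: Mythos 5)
Your overall architecture differs from the paper's: here Theorem \ref{theo:formal-pep} is essentially imported from \cite{pep-formal}, where [P1], [P3] and the case of [P2] with $\varphi$ an inclusion are established, and the only part actually proved in the present paper is the extension of [P2] to an arbitrary morphism $\varphi:H\to K$, carried out in Appendix \ref{sec:appendix-C} by factoring $\varphi=i\circ j$ through the image $L=\varphi(H)$ and checking the identity $\Qcal(M_{\mu,L})=\Qcal(M_{j^*(\mu),H})$ of (\ref{eq:Qcal-H-L}) via the localization Theorem \ref{theo:loc-index}. Within your from-scratch attempt, the treatment of [P1] (reduced spaces of a product factor, the index is multiplicative) and the derivation of [P3] from [P1] and [P2] along the diagonal $K\croc K\times K$ are correct and standard, and your admissibility argument for [P2] is exactly the one the paper reproduces in Lemma \ref{lem:C2impliqueC1}.

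The genuine gap is in the character identity of [P2], which you yourself flag as the main obstacle but do not close. A symplectic cut of $M$ performed with respect to $\Phi_H$ at level $\nu$ is an $H$-space but carries no residual $K$-action in general, since the cut hypersurface $\{\|\Phi_H\|=c\}$ is only $H$-invariant; so the compact prequantized $K$-manifold $M^{cut}_\nu$ whose $K$- and $H$-quantizations simultaneously reproduce the relevant multiplicities of $\qfor_K(M)$ and of $\qfor_H(M)$ is not constructed, and constructing such a comparison is precisely the hard content of \cite{pep-formal}. The argument there must truncate $K$-equivariantly (equivalently, localize the Riemann--Roch character by the vector field of $\|\Phi_K\|^2$, as in Section \ref{sec:deformation}) and then control the contribution of the cut locus to each $H$-multiplicity; the scaling trick of Lemma \ref{lem:Delta-K-M} does not substitute for this step. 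Separately, your sketch implicitly treats $\varphi$ as an inclusion (you speak of $H\subsetneq K$ and of two moment polytopes inside $\kgot^*$); when $\varphi$ is not injective one must still identify the $H$-reduced spaces with the $L$-reduced spaces for $L=\varphi(H)$, which is what Appendix \ref{sec:appendix-C} supplies and what your proposal omits.
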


Property $\textbf{[P2]}$ is the hard point in this theorem. In \cite{pep-formal}, we have only 
consider  the case where $\varphi$ is the inclusion of a subgroup. In Appendix 
\ref{sec:appendix-C}, we check the general general case of a morphism $\varphi: H\to K$.

%%%%%%%%%%%%%%%%%%%%%%%%%%%%%%%%%%%%%%%%%%%%%%%%%%%%%%%%%%%%%%%%%%
\subsection{Outline of the proof of Theorem \ref{theo:intro}}
%%%%%%%%%%%%%%%%%%%%%%%%%%%%%%%%%%%%%%%%%%%%%%%%%%%%%%%%%%%%%%%%%%

We come back to the setting of the introduction. We consider the holomorphic discrete series representation 
$\Qcal_G(\Ocal_\Lambda)$ attached to the Blattner parameter $\Lambda\in \chol$. 
Recall that the coadjoint orbit $\Ocal_\Lambda\simeq G/K_\Lambda$, which is equipped with the 
Kirillov-Kostant-Souriau symplectic form $\Omega_\Lambda$, is pre-quantized by the  line 
bundle $\Lcal_\Lambda:=G\times_{K_\Lambda}\C_\Lambda$.

Consider first the Hamiltonian action of $K$ on $\Ocal_\Lambda$ (here $K$ is a maximal 
compact subgroup of $G$). 
One knows that the corresponding moment map  $\Phi_K:\Ocal_\Lambda\to \kgot^*$ is {\em proper} 
\cite{Duflo-Heckman-Vergne,pep3}. Hence the formal quantization $\qfor_K(\Ocal_\Lambda)$ of 
the $K$-action on $\Ocal_\Lambda$ is well-defined.

Theorem \ref{theo:H-C} tells us that the restriction of the representation $\Qcal_G(\Ocal_\Lambda)$ to $K$ is 
$$
\Qcal_K(\Ocal_\Lambda)=V_\Lambda^K\otimes S(\pgot^+).
$$

Theorem \ref{theo:intro}, restricted to the case where $H=K$, is then equivalent to  
the identity
\begin{equation}\label{eq:H=K}
\qfor_K(\Ocal_\Lambda)=V_\Lambda^K\otimes S(\pgot^+)\quad {\rm in} \quad \Rfor(K),
\end{equation}
that we prove in Section \ref{sec:calcul-Q-Lambda}.

Consider now the situation of a closed connected subgroup $H$ of $K$, such that the restriction 
$\Qcal_H(\Ocal_\Lambda)$ is admissible, e.g. the moment map $\Phi_H:\Ocal_\Lambda\to \hgot^*$ 
is proper (see Proposition \ref{prop:admissible}).  We can apply property \textbf{[P2]} of Theorem 
\ref{theo:formal-pep}. The formal quantization 
$\qfor_H(\Ocal_\Lambda)$ of the $H$-action on $\Ocal_\Lambda$ is equal to the restriction of 
the formal quantization $\qfor_K(\Ocal_\Lambda)$ of the $K$-action on $\Ocal_\Lambda$. Hence
(\ref{eq:H=K}) implies that 
$$
\qfor_H(\Ocal_\Lambda)=\Qcal_H(\Ocal_\Lambda).
$$

So Theorem \ref{theo:intro} is proved for all the admissible restrictions 
$\Qcal_H(\Ocal_\Lambda)$, when one proves it for the case $H=K$.

%%%%%%%%%%%%%%%%%%%%%%%%%%%%%%%%%%%%%%%%%%%%%%%%%%%
%%%%%%%%%%%%%%%%%%%%%%%%%%%%%%%%%%%%%%%%%%%%%%%%%%%
\section{Computation of $\qfor_K(\Ocal_\Lambda)$}\label{sec:calcul-Q-Lambda}
%%%%%%%%%%%%%%%%%%%%%%%%%%%%%%%%%%%%%%%%%%%%%%%%%%%
%%%%%%%%%%%%%%%%%%%%%%%%%%%%%%%%%%%%%%%%%%%%%%%%%%%

In this section we prove the following

\begin{theo}\label{theo:formal-lambda}
Let $\Ocal_\Lambda$ be the coadjoint orbit passing through $\Lambda\in\Chol$. We have 
$$
\qfor_K(\Ocal_\Lambda)=V_\Lambda^K\otimes S(\pgot^+).
$$
\end{theo}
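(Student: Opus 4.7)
The plan is to identify $\Ocal_\Lambda$, at the level of formal $K$-equivariant quantization, with the product $K\cdot\Lambda\times\pgot^+$, and then invoke the multiplicativity property \textbf{[P3]} of Theorem \ref{theo:formal-pep}. Once this reduction is achieved, the right hand side $V_\Lambda^K\otimes S(\pgot^+)$ splits as a Borel--Weil factor times a linear-symplectic factor, which can each be handled separately.

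First, using the Cartan decomposition $G=K\exp(\pgot)$ together with the $K$-action on $\pgot^+$ coming from $\mathrm{ad}(z_o)$, I would write down a $K$-equivariant diffeomorphism $\Ocal_\Lambda\simeq K/K_\Lambda\times\pgot^+= K\cdot\Lambda\times\pgot^+$ (the $K_\Lambda$-action on $\pgot^+$ is just the restriction of the $K$-action, so this twisted product is a genuine product). Pulling back the Kirillov--Kostant--Souriau form $\Omega_\Lambda$ gives a symplectic form $\Omega_0$ on $K\cdot\Lambda\times\pgot^+$ with proper moment map $\Phi_K$. I would then construct a family of $K$-invariant symplectic forms $\Omega_t$ on $K\cdot\Lambda\times\pgot^+$ interpolating between $\Omega_0$ and the product form $\Omega_1:=\Omega_{K\cdot\Lambda}\oplus\Omega_{\pgot^+}$, arranging that the corresponding moment maps $\Phi_K^t$ are proper for every $t\in[0,1]$ and that the line bundle $\Lcal_\Lambda$ extends as a pre-quantum line bundle throughout. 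The properness should hold because the $Z(K)$-component of every $\Phi_K^t$ is proper on the $\pgot^+$-factor (since $z_o$ acts by rotation), and the $\tgot^*$-image of $K\cdot\Lambda$ stays inside the chamber $\Chol$.

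The deformation invariance of $\qfor_K$ then yields $\qfor_K(\Ocal_\Lambda,\Omega_\Lambda)=\qfor_K(K\cdot\Lambda\times\pgot^+,\Omega_1)$. By \textbf{[P3]}, the latter equals $\Qcal_K(K\cdot\Lambda)\cdot\qfor_K(\pgot^+)$. Borel--Weil gives $\Qcal_K(K\cdot\Lambda)=V_\Lambda^K$, so it remains to show
\[
\qfor_K(\pgot^+)=S(\pgot^+)\quad\text{in}\quad\Rfor(K).
\]
This is the formal quantization of the linear $K$-Hamiltonian manifold $\pgot^+$ whose moment map $\Phi:\pgot^+\to\kgot^*$, $\Phi(v)(X)=\tfrac{1}{2}\langle\mathrm{ad}(X)v,v\rangle$, is proper (thanks again to the $Z(K)$-rotation). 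The reductions $(\pgot^+)_\mu$ are computed by standard GIT: $\Qcal((\pgot^+)_\mu)$ equals the multiplicity of $V_\mu^K$ in the algebra of polynomial functions on the affine variety $\pgot^+$, which by Schmid's theorem cited in the introduction is exactly the multiplicity of $V_\mu^K$ in $S(\pgot^+)$.

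The delicate step is the deformation invariance. A way to handle it without a direct Witten-deformation argument is to note that both $(\Ocal_\Lambda,\Omega_\Lambda)$ and $(K\cdot\Lambda\times\pgot^+,\Omega_1)$, together with their $k$-fold scalings, have the same Kirwan polyhedron (one computes $\Delta_K(\Ocal_\Lambda)=K\cdot\Lambda+\Delta_K(\pgot^+)$ and the same for the product), and then apply a refinement of Lemma \ref{lem:Delta-K-M}: one shows inductively on the walls of the chambers that the multiplicity functions $\mu\mapsto\Qcal((\Ocal_\Lambda)_\mu)$ and $\mu\mapsto\Qcal((K\cdot\Lambda\times\pgot^+)_\mu)$ are piecewise quasi-polynomial with the same leading asymptotics (via the Meinrenken--Sjamaar asymptotic Riemann--Roch formula) and hence agree. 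If this bypass fails, one has to justify deformation invariance directly by controlling the critical set of $\|\Phi_K^t\|^2$ and applying the excision principle underlying \cite{pep-formal}; this is the part of the proof I expect to be the most technical.
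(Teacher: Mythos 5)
Your overall architecture --- identify $\Ocal_\Lambda$ with $K\cdot\Lambda\times\pgot$ via the Cartan decomposition, split off the Borel--Weil factor, and reduce to $\qfor_K(\pgot)=S(\pgot^+)$ --- is the right geometric picture, and the last step ($\qfor_K(\pgot)=S(\pgot^+)$, Corollary \ref{coro:Q-formal-p-K}) is indeed available from \cite{pep-formal}. But the step you yourself flag as delicate is a genuine gap, and neither of your two ways of closing it works. First, ``deformation invariance of $\qfor_K$'' along a path of symplectic forms $\Omega_t$ is not established anywhere and is not a soft statement: changing $\Omega_t$ changes the reduced spaces $(M,\Omega_t)_\mu$ themselves, and there is no a priori reason the integers $\Qcal\left((M,\Omega_t)_\mu\right)$ are constant in $t$ (regular values become singular, walls are crossed). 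Indeed, the paper records only as a \emph{conjecture} (remark after Proposition \ref{prop:Delta-K-Ocal-Lambda}) that $\Ocal_\Lambda$ is $K$-equivariantly symplectomorphic to the product $K\cdot\Lambda\times\pgot$; your interpolation, if it existed with the required properties, would be doing comparable work and cannot be waved through. Second, the proposed bypass is logically invalid: equality of Kirwan polyhedra plus equality of \emph{leading} asymptotics of two piecewise quasi-polynomial multiplicity functions does not force the functions to agree (lower-order terms are unconstrained). Note also that Lemma \ref{lem:Delta-K-M} goes in the opposite direction --- from equality of formal quantizations to equality of Kirwan polytopes --- and the paper uses Theorem \ref{theo:formal-lambda} to \emph{deduce} $\Delta_K(\Ocal_\Lambda)=\Delta_K(K\cdot\Lambda\times\pgot)$, not the reverse.

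What the paper does instead, and what your argument is missing, is to keep the symplectic structure fixed and move the whole problem to the level of transversally elliptic symbols, where homotopy invariance of the index \emph{is} a theorem (Atiyah). Concretely: (i) the shifting trick together with Theorem \ref{theo:loc-index} identifies $\qfor_K(\Ocal_\Lambda)$ with $\indice^K(\sigma_\Lambda)$, where $\sigma_\Lambda$ is the Dolbeault symbol $\Thom(\Ocal_\Lambda,J_\Lambda)\otimes\Lcal_\Lambda$ pushed by the Hamiltonian vector field of $-\frac{1}{2}\|\Phi_K\|^2$ (Proposition \ref{prop:formal-lambda}); (ii) that index is computed by transporting $\sigma_\Lambda$ to $K\cdot\Lambda\times\pgot$, deforming the \emph{almost complex structure and the vector field} (not the symplectic form) to product data, and applying Atiyah's induction isomorphism $j_*$ to reduce to a $K_\Lambda$-index on $\pgot^-$, which gives $S(\pgot^+)\otimes V_\Lambda^K$ (Proposition \ref{prop-indice-sigma-Lambda}). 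If you want to salvage your route, the work to be done is precisely this index-theoretic substitute for your symplectic deformation; as written, the proof does not go through.
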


Similar computation was done in \cite{pep-ENS} in the setting of a 
geometric quantization of the  ``Spin'' type. 

Note that the formal quantization of  $\Ocal_\Lambda$ behave differently from the ``true'' one, defined 
in Definition \ref{def:quant-lambda}, when $\Lambda\in\Chol\setminus\chol$ : in this case 
$\Qcal_K(\Ocal_\Lambda)=\{0\}$ whereas $\qfor_K(\Ocal_\Lambda)\neq \{0\}$.

The proof of Theorem \ref{theo:formal-lambda} is conducted as follows. 
We introduce in Section \ref{sec:sigma-L} a $K$-transversaly elliptic 
symbol $\sigma_\Lambda$ on $\Ocal_\Lambda$. A direct computation, done in Section \ref{sec:Q-direct}, 
shows that the $K$-equivariant index of $\sigma_\Lambda$ is equal to $V_\Lambda^K\otimes S(\pgot^+)$. 
In Section \ref{sec:deformation}, we use a deformation argument based on the shifting trick to show 
that the index of $\sigma_\Lambda$ coincides with $\qfor_K(\Ocal_\Lambda)$. Putting these results together 
completes the proof of Theorem \ref{theo:formal-lambda}.

%%%%%%%%%%%%%%%%%%%%%%%%%%%%%%%%%%%%%%%%%%%%%%%%%%%%%%
\subsection{Transversaly elliptic symbols}
%%%%%%%%%%%%%%%%%%%%%%%%%%%%%%%%%%%%%%%%%%%%%%%%%%%%%%

Here we give the basic definitions from the theory of
transversaly elliptic symbols (or operators) 
defined by Atiyah and Singer in \cite{Atiyah74}. For an axiomatic treatment
of the index morphism see Berline-Vergne 
\cite{B-V.inventiones.96.1,B-V.inventiones.96.2} and for a short
introduction see \cite{pep-RR}.

Let $M$ be a {\em compact} $K$-manifold. Let $p:\T M\to M$ 
be the projection, and let $(-,-)_M$ be a $K$-invariant Riemannian metric.
If $E^{0},E^{1}$ are $K$-equivariant complex vector bundles over $M$, a 
$K$-equivariant morphism $\sigma \in \Gamma(\T M,\hom(p^{*}E^{0},p^{*}E^{1}))$  
is called a {\em symbol} on $M$. The subset of all $(m,v)\in \T M$ where 
$\sigma(m,v): E^{0}_{m}\to E^{1}_{m}$ is not invertible is called 
the {\em characteristic set} of $\sigma$, and is denoted by $\Char(\sigma)$. 

In the following, the product of a symbol  $\sigma$ 
by a complex vector bundle $F\to M$, is the symbol
$$
\sigma\otimes F
$$
defined by $\sigma\otimes F(m,v)=\sigma(m,v)\otimes {\rm Id}_{F_m}$ from 
$E^{0}_m\otimes F_m$ to $E^{1}_m\otimes F_m$. Note that $\Char(\sigma\otimes F)=\Char(\sigma)$.

Let $\T_{K}M$ be the following subset of $\T M$ :
$$
   \T_{K}M\ = \left\{(m,v)\in \T M,\ (v,VX(m))_{_{M}}=0 \quad {\rm for\ all}\ 
   X\in\kgot \right\} .
$$

A symbol $\sigma$ is {\em elliptic} if $\sigma$ is 
invertible outside a compact subset of $\T M$ ($\Char(\sigma)$ is
compact), and is {\em transversaly elliptic} if the restriction of $\sigma$ 
to $\T_{K}M$ is invertible outside a compact subset  of $\T_{K}M$ 
($\Char(\sigma)\cap \T_{K}M$ is compact). An elliptic symbol $\sigma$ defines 
an element in the equivariant $K$-theory of $\T M$ with compact 
support, which is denoted by $\K_{K}(\T M)$, and the 
index of $\sigma$ is a virtual finite dimensional representation of $K$, that we denote
$\indice_{M}^K(\sigma)\in R(K)$ 
\cite{Atiyah-Segal68,Atiyah-Singer-1,Atiyah-Singer-2,Atiyah-Singer-3}.

Let 
$$
\Rforc(K)\subset R^{-\infty}(K)
$$ 
be the $R(K)$-submodule formed by all the infinite sum $\sum_{\mu\in\what{K}}m_\mu V_\mu^K$ 
where the map $\mu\in\what{K}\mapsto m_\mu\in\Z$ has at most a {\em
polynomial} growth. The $R(K)$-module $\Rforc(K)$ is the Grothendieck group 
associated to the {\em trace class} virtual $K$-representations: we can associate 
to any $V\in\Rforc(K)$, its trace $k\to \Tr(k,V)$ which is a generalized function on $K$ 
invariant by conjugation. In Section \ref{sec:Q-direct}, we use the fact that the trace 
defines a morphism of $R(K)$-module
\begin{equation}\label{eq:trace}
\Rforc(K)\croc \fgene(K)^K.
\end{equation}

A {\em transversaly elliptic} symbol $\sigma$ defines an element of 
$\K_{K}(\T_{K}M)$, and the index of $\sigma$ is defined as a trace class virtual 
representation of $K$,  that we still denote
$\indice_{M}^K(\sigma)\in \Rforc(K)$. See \cite{Atiyah74} for the analytic index and 
\cite{B-V.inventiones.96.1,B-V.inventiones.96.2} for the cohomological one. 
Remark that any elliptic symbol of $\T M$ is transversaly elliptic, hence 
we have a restriction map $\K_{K}(\T M)\to \K_{K}(\T_{K}M)$, and 
a commutative diagram
\begin{equation}\label{indice.generalise}
\xymatrix{
\K_{K}(\T M)\ar[r]\ar[d]_{\indice_{M}^K} & 
\K_{K}(\T_{K}M)\ar[d]^{\indice_{M}^K}\\
R(K)\ar[r] &  \Rforc(K)\ .
   }
\end{equation} 

\medskip

\medskip

Using the {\em excision property}, one can easily show that the index map 
$\indice_{\Ucal}^{K}:\K_{K}(\T_{K}\Ucal)\to \Rforc(K)$
is still defined when $\Ucal$ is a $K$-invariant relatively compact 
open subset of a $K$-manifold (see \cite{pep-RR}[section 3.1]).

%%%%%%%%%%%%%%%%%%%%%%%%%%%%%%%%%%%%%%%%%%%%%%%%%%%%%%%%%%%%%%%%%%%%%%%%%%%%%%%%%%%%%%%%%%
\subsection{The transversaly elliptic symbol $\sigma_\Lambda$}\label{sec:sigma-L}
%%%%%%%%%%%%%%%%%%%%%%%%%%%%%%%%%%%%%%%%%%%%%%%%%%%%%%%%%%%%%%%%%%%%%%%%%%%%%%%%%%%%%%%%%%

Let $\Lambda\in\Chol$. Let us first describe the principal symbol  of the Dolbeault-Dirac operator 
$\overline{\partial}_{\Lcal_\Lambda} + \overline{\partial}^*_{\Lcal_\Lambda}$ defined on the coadjoint orbit 
$\Ocal_\Lambda$. The complex vector bundle $(\T^* \Ocal_\Lambda)^{0,1}$ is $G$-equivariantly identified 
with the tangent bundle $\T \Ocal_\Lambda$ equipped with the complex structure $J_\Lambda$.

Let $h$ be the  Hermitian structure on  $\T \Ocal_\Lambda$ defined by : 
$h(v,w)=\Omega_\Lambda(v,J_\Lambda w) -i \Omega_\Lambda(v,w)$ for $v,w\in \T M$. The symbol 
$$
\Thom(\Ocal_\Lambda,J_\Lambda)\in 
\Gamma\left(\Ocal_\Lambda,\hom(p^{*}(\wedge_{\C}^{even} \T \Ocal_\Lambda),\,p^{*}
(\wedge_{\C}^{odd} \T \Ocal_\Lambda))\right)
$$  
at $(m,v)\in \T \Ocal_\Lambda$ is equal to the Clifford map
\begin{equation}\label{eq.thom.complex}
 \Clif_{m}(v)\ :\ \wedge_{\C}^{even} \T_m \Ocal_\Lambda
\longrightarrow \wedge_{\C}^{odd} \T_m \Ocal_\Lambda,
\end{equation}
where $\Clif_{m}(v).w= v\wedge w - c_{h}(v)w$ for $w\in 
\wedge_{\C}^{\bullet} \T_{x}\Ocal_\Lambda$. Here $c_{h}(v):\wedge_{\C}^{\bullet} 
\T_{m}\Ocal_\Lambda\to\wedge^{\bullet -1} \T_{m}\Ocal_\Lambda$ denotes the 
contraction map relative to $h$. Since $\Clif_{m}(v)^2=-\vert v\vert^2 {\rm Id}$, the map  
$\Clif_{m}(v)$ is invertible for all $v\neq 0$. Hence the characteristic set 
of $\Thom(\Ocal_\Lambda,J_\Lambda)$ corresponds to the $0$-section of $\T\Ocal_\Lambda$.

It is a classical fact that the principal symbol  of the Dolbeault-Dirac operator 
$\overline{\partial}_{\Lcal_\Lambda} + \overline{\partial}^*_{\Lcal_\Lambda}$ is equal to\footnote{Here 
we use an identification $\T^*\Ocal_\Lambda\simeq \T\Ocal_\Lambda$ given by an invariant Riemannian metric.}  
\begin{equation}\label{eq:tau-Lambda}
\tau_\Lambda:=\Thom(\Ocal_\Lambda,J_\Lambda)\otimes \Lcal_\Lambda,
\end{equation}
see \cite{Duistermaat96}.  Here also we have $\Char(\tau_\Lambda)=0-{\rm section\ of}\ \T\Ocal_\Lambda$.  
So $\tau_\Lambda$ is not an elliptic symbol 
since the coadjoint orbit $\Ocal_\Lambda$ is non-compact. 

Following \cite{pep-RR,pep-ENS}, we deform $\tau_\Lambda$ in order to define a 
$K$-transversaly elliptic symbol on $\Ocal_\Lambda$. Consider the moment map 
$\Phi_K:\Ocal_\Lambda\to\kgot^*$. With the help of the $K$-invariant scalar product on 
$\kgot^*$ induced by the Killing form on $\ggot$, we define the $K$-invariant function 
$$
\parallel\Phi_K\parallel^2:\Ocal_\Lambda\to\R.
$$
 Let $\Hcal$ be the 
Hamiltonian vector field for $\frac{-1}{2}\parallel\Phi_K\parallel^2$, i.e. 
the contraction of the symplectic form by $\Hcal$ is equal to the $1$-form 
$\frac{-1}{2}d\parallel\Phi_K\parallel^2$.  The vector field $\Hcal$ 
has the following nice description. The scalar product on 
$\kgot^{*}$ gives an identification $\kgot^{*}\simeq\kgot$,  
hence $\Phi_K$ can be consider as a map from $\Ocal_\Lambda$ to $\kgot$. 
We have then
\begin{equation}\label{def:H}
\Hcal_{m}=(V\Phi_K(m))\vert_{m},\quad m\in \Ocal_\Lambda\ ,
\end{equation}
where $V\Phi_K(m)$ is the vector field on $\Ocal_\Lambda$ generated by 
$\Phi_K(m)\in\kgot$.

\begin{defi}\label{def:sigma-Lambda}
Let $\tau_\Lambda$ be the symbol on $\Ocal_\Lambda$ defined in (\ref{eq:tau-Lambda}). The 
{\em symbol $\tau_\Lambda$ pushed by the vector field $\Hcal$} is the 
symbol $\sigma_\Lambda$  defined by the relation
    $$
   \sigma_{\Lambda}(m,v):=\tau_\Lambda(m,v-\Hcal_{m})
   $$
   for any $(m,v)\in\T \Ocal_\Lambda$. 
\end{defi}

The characteristic set of $\sigma_{\Lambda}$ corresponds to
$\{(m,v)\in \T \Ocal_\Lambda,\ v=\Hcal_m\}$, the graph of the vector field
$\Hcal$. Since $\Hcal$ belongs to the set of tangent vectors to the 
$K$-orbits, we have
\begin{eqnarray*}
\Char\left(\sigma_{\Lambda}\right)\cap \T_{K}\Ocal_\Lambda&=&\{(m,0)\in \T \Ocal_\Lambda,\ 
\Hcal_m=0 \}\\
&\cong& \{m\in \Ocal_\Lambda,\ d\parallel\Phi_K\parallel^2_m=0 \} \ .
\end{eqnarray*}
Therefore the symbol $\sigma_{\Lambda}$ is $K$-transversaly elliptic 
if and only if the set  
$\Cr(\parallel\Phi_K\parallel^2)$ of critical points 
of the function $\parallel\Phi_K\parallel^2$ is {\em compact}. 

We have the following result.
\begin{lem}[\cite{Duflo-Heckman-Vergne,pep3}]\label{lem:critical}
The set $\Cr(\parallel\Phi_K\parallel^{2})\subset \Ocal_\Lambda$ is equal to 
the orbit $K\cdot\Lambda$.
\end{lem}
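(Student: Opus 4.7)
The plan is to characterize $\Cr(\|\Phi_K\|^{2})$ by a Lie-algebraic condition and then exploit the fact that every point of $\Ocal_\Lambda$ has compact stabilizer, since $\Lambda\in\Chol$ is strongly elliptic. As usual for the square of a moment map, a short computation from (\ref{eq:hamiltonian-action}) shows that $\xi\in\Cr(\|\Phi_K\|^{2})$ if and only if $V\Phi_K(\xi)|_\xi=0$, equivalently $\Phi_K(\xi)\in\ggot_\xi$; this is precisely the zero set of the vector field $\Hcal$ already described in (\ref{def:H}).

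To make this condition concrete, I identify $\ggot^{*}\simeq\ggot$ via the $G$-invariant Killing form $B$. Since $\ggot=\kgot\oplus\pgot$ is $B$-orthogonal, this restricts to $\kgot^{*}\simeq\kgot$ and $\pgot^{*}\simeq\pgot$. Writing $Y=Y_\kgot+Y_\pgot\in\kgot\oplus\pgot$ for the element corresponding to $\xi$, one finds that $\Phi_K(\xi)$ corresponds to $Y_\kgot$ and that, by $B$-invariance, the vanishing of $V\Phi_K(\xi)|_\xi$ translates to $[Y_\kgot,Y_\pgot]=0$. Now if $\xi=g\cdot\Lambda$, then $Y=\mathrm{Ad}(g)Y_\Lambda$ and $G_\xi=gK_\Lambda g^{-1}$ is a compact subgroup of $G$, thanks to the strong ellipticity of $\Lambda$. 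If $Y_\pgot\neq 0$, the identity $[Y_\pgot,Y]=[Y_\pgot,Y_\kgot]=0$ would force the one-parameter subgroup $\{\exp(tY_\pgot)\}_{t\in\R}$ to lie in $G_\xi$; but $\exp:\pgot\hookrightarrow G$ is a proper embedding via the Cartan decomposition $G\simeq K\times\exp(\pgot)$, so this subgroup is unbounded in $G$, contradicting the compactness of $G_\xi$. Hence $Y_\pgot=0$ and $\xi\in\kgot^{*}$.

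It remains to show $\kgot^{*}\cap G\cdot\Lambda=K\cdot\Lambda$. Writing $g=k\exp(P)$ with $k\in K$ and $P\in\pgot$, the condition $\mathrm{Ad}(g)Y_\Lambda\in\kgot$ becomes $\sinh(\ad P)Y_\Lambda=0$. Because $(\ad P)^{2}$ acts on $\kgot$ as a positive semidefinite operator for the Cartan-compact inner product, with quadratic form $X\mapsto B([P,X],[P,X])$, a spectral decomposition yields $\ker(\sinh(\ad P))|_\kgot=\ker(\ad P)|_\kgot$; thus $[P,Y_\Lambda]=0$, and since $\ggot_\Lambda=\kgot_\Lambda\subset\kgot$ one has $P\in\pgot\cap\kgot_\Lambda=\{0\}$. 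Hence $g\in K$ and $\xi\in K\cdot\Lambda$. The reverse inclusion is trivial: for $\xi=k\cdot\Lambda$ the corresponding element is $Y=\mathrm{Ad}(k)Y_\Lambda\in\kgot$, so $Y_\pgot=0$ and the criterion is satisfied. The main obstacle is really the unboundedness argument in the previous paragraph, which is the point where the specific structure of $G$ as a real semisimple group—through the global Cartan decomposition—enters essentially; given that input, the rest of the argument reduces to formal identities in $\ggot$.
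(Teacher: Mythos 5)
Your argument is correct. The paper itself gives no proof of this lemma: it is stated as a quoted result from \cite{Duflo-Heckman-Vergne,pep3}, so there is no internal argument to compare against. Your proof supplies a complete and self-contained justification, and its three ingredients are all sound: (i) the identification of $\Cr(\parallel\Phi_K\parallel^{2})$ with the zero set of $\Hcal$, i.e.\ with $\{\xi : [Y_\kgot,Y_\pgot]=0\}$ after transporting everything to $\ggot$ by the Killing form (the sign ambiguities between the identifications $\ggot^*\simeq\ggot$ and $\kgot^*\simeq\kgot$ are harmless here, since only the vanishing of a bracket is at stake); (ii) the observation that $[Y_\kgot,Y_\pgot]=0$ puts $Y_\pgot$ in the isotropy algebra $\ggot_\xi$, so that the unboundedness of $t\mapsto\exp(tY_\pgot)$ in the global Cartan decomposition contradicts the compactness of $G_\xi$ guaranteed by the strong ellipticity of $\Lambda\in\Chol$, forcing $Y_\pgot=0$; and (iii) the spectral argument showing $\ker\bigl(\sinh(\ad(P))\bigr)\vert_\kgot=\ker(\ad(P))\vert_\kgot$, which together with $\ggot_\Lambda=\kgot_\Lambda\subset\kgot$ gives $\kgot\cap G\cdot\Lambda=K\cdot\Lambda$. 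This is essentially the classical argument of Duflo--Heckman--Vergne for the properness and critical set of the projection of a strongly elliptic orbit, so you have in effect reconstructed the content of the cited reference; you correctly isolate the compact-stabilizer step as the only place where the real-group structure enters.
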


\begin{coro}
The symbol $\sigma_\Lambda$ is $K$-transversaly elliptic.
\end{coro}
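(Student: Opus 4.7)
The corollary is essentially a one-line deduction from what has already been assembled, so my plan is simply to trace through the chain of equivalences.

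First I would recall the characterization established immediately before the statement: a symbol is $K$-transversally elliptic exactly when $\Char(\sigma)\cap \T_K M$ is compact. From the computation carried out in Definition \ref{def:sigma-Lambda} and the lines following, we already have the identification
$$
\Char(\sigma_\Lambda)\cap \T_K\Ocal_\Lambda \ \cong\ \Cr(\|\Phi_K\|^2),
$$
which relied on two observations: that $\Char(\sigma_\Lambda)$ is the graph of $\Hcal$, and that $\Hcal_m$ is tangent to the $K$-orbit through $m$ (so $\Hcal_m\in \T_K\Ocal_\Lambda$ forces $\Hcal_m=0$, i.e.\ $m$ is a critical point of $\|\Phi_K\|^2$).

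Then the conclusion is immediate from Lemma \ref{lem:critical}: that lemma identifies the critical set with the single coadjoint $K$-orbit $K\cdot\Lambda\subset \Ocal_\Lambda$. Since $K$ is compact, $K\cdot\Lambda$ is compact, hence $\Char(\sigma_\Lambda)\cap \T_K\Ocal_\Lambda$ is compact, which is precisely the definition of $K$-transversal ellipticity. There is no genuine obstacle here; all the work sits in Lemma \ref{lem:critical}, whose statement is cited from \cite{Duflo-Heckman-Vergne,pep3} and whose geometric content is that $\|\Phi_K\|^2$ attains its minimum along $K\cdot\Lambda$ and has no other critical points on $\Ocal_\Lambda=G\cdot\Lambda$.
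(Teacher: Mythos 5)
Your argument is correct and is exactly the one the paper intends: the preceding discussion identifies $\Char(\sigma_\Lambda)\cap\T_K\Ocal_\Lambda$ with $\Cr(\|\Phi_K\|^2)$, and Lemma \ref{lem:critical} says this critical set is the compact orbit $K\cdot\Lambda$, so the symbol is $K$-transversally elliptic. The paper gives no separate proof of the corollary precisely because, as you note, all the content sits in the lemma.
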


%%%%%%%%%%%%%%%%%%%%%%%%%%%%%%%%%%%%%%%%%%%%%%%%%%%%%
\subsection{Computation of $\indice^K(\sigma_\Lambda)$: the direct approach}\label{sec:Q-direct}
%%%%%%%%%%%%%%%%%%%%%%%%%%%%%%%%%%%%%%%%%%%%%%%%%%%%%

The equivariant index of the symbol $\sigma_\Lambda$ can be defined by different manners.

On one hand, since $\Ocal_\Lambda$ can be imbedded $K$-equivariantly in a compact manifold, one can consider
$\indice^K_{\Ocal_\Lambda}(\sigma_{\Lambda})\in\Rforc(K)$.

On the other hand, for any $K$-invariant relatively compact open neighborhood $\Ucal\subset \Ocal_\Lambda$ 
of $\Cr(\parallel\Phi_K\parallel^2)$, the restriction of 
$\sigma_{\Lambda}$ to $\Ucal$ defines a class $\sigma_{\Lambda}\vert_{\Ucal}
 \in \K_{K}(\T_{K}\Ucal)$. Since the index map is well defined on 
$\Ucal$, we can take its index $\indice_{\Ucal}^K(\sigma_{\Lambda}\vert_{\Ucal})$. 
A direct application of the excision property shows  that  
$\indice^K_{\Ocal_\Lambda}(\sigma_{\Lambda})=\indice_{\Ucal}^K(\sigma_{\Lambda}\vert_{\Ucal})$. 
In order to simplify our notation, the index of $\sigma_\Lambda$ is denoted 
$$
\indice^K(\sigma_{\Lambda})\in\Rforc(K).
$$

The aim of this section is the following

\begin{prop}\label{prop-indice-sigma-Lambda} 
Let $\Lambda\in\Chol$. We have 
$$
\indice^K(\sigma_\Lambda)=S(\pgot^+)\otimes V^K_\Lambda \quad {\rm in}\quad \Rforc(K).
$$
\end{prop}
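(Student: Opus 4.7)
The plan is to compute $\indice^K(\sigma_\Lambda)$ by localizing $\sigma_\Lambda$ to a neighborhood of the compact critical orbit $K\cdot\Lambda$ and factoring the localized symbol as an external product over that base. Lemma~\ref{lem:critical} together with the excision property first shows that the computation is unchanged if one restricts $\sigma_\Lambda$ to a $K$-invariant relatively compact tubular neighborhood $\Ucal$ of $K\cdot\Lambda$. The equivariant slice theorem, combined with the $K_\Lambda$-equivariant splitting $\T_\Lambda\Ocal_\Lambda=\kgot/\kgot_\Lambda\oplus\pgot$, identifies $\Ucal$ with a neighborhood of the zero section in the normal bundle $\Ncal:=K\times_{K_\Lambda}\pgot\to K\cdot\Lambda$.

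I would then factor the ingredients of $\sigma_\Lambda$ in this normal form. The explicit description of $\T^{1,0}\Ocal_\Lambda$ at $\Lambda$ given in the introduction shows that $J_\Lambda|_\Ucal$ splits as the product of the standard $K$-invariant complex structure $J_K$ on $K\cdot\Lambda\simeq K/K_\Lambda$ (whose positive holomorphic compact roots are those $\alpha\in\Rgot_c^+$ with $\langle\alpha,\Lambda\rangle>0$) with the $K_\Lambda$-invariant complex structure $J_\pgot$ on the fiber $\pgot$, for which $\pgot^-$ is the holomorphic part; and $\Lcal_\Lambda|_\Ucal$ is the pullback of $K\times_{K_\Lambda}\C_\Lambda$ along $\Ncal\to K\cdot\Lambda$. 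Since $\Phi_K\equiv\Lambda$ on $K\cdot\Lambda$, a Taylor expansion of $\Phi_K$ in normal coordinates together with the identity $\Hcal_m=V\Phi_K(m)|_m$ show that $\Hcal$ vanishes on the zero section of $\Ncal$ and that its restriction to a fiber $\pgot$ coincides, up to a $K_\Lambda$-equivariant homotopy, with the linear vector field $v\mapsto V(-\Lambda)_v$ generated by $-\Lambda\in\kgot_\Lambda$. A standard deformation through $K$-transversally elliptic symbols (in the spirit of the local-model computations of \cite{pep-RR}) then replaces $\Hcal$ on $\Ucal$ by this fiberwise linear vector field without altering the index, so that $\sigma_\Lambda|_\Ucal$ becomes equivalent in $\K_{K}(\T_{K}\Ncal)$ to the external product
\begin{equation*}
\bigl(\Thom(K\cdot\Lambda,J_K)\otimes\Lcal_\Lambda|_{K\cdot\Lambda}\bigr)\;\boxtimes\;\sigma_\pgot,
\end{equation*}
where $\sigma_\pgot:=\Thom(\pgot,J_\pgot)$ pushed by the $(-\Lambda)$-vector field is a $K_\Lambda$-transversally elliptic symbol on the fiber. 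By multiplicativity and the induction property of the equivariant index this gives
\begin{equation*}
\indice^K(\sigma_\Lambda)=\indice^K\bigl(K\cdot\Lambda,\,\Thom(K\cdot\Lambda,J_K)\otimes\Lcal_\Lambda\bigr)\cdot\indL\bigl(\indice^{K_\Lambda}(\sigma_\pgot)\bigr).
\end{equation*}

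Each factor is then classical. The first is the holomorphic Euler characteristic of the compact complex flag $K/K_\Lambda$ with values in the line bundle of the dominant weight $\Lambda$, which by Borel-Weil-Bott equals $V_\Lambda^K$. For the second, $(\pgot,J_\pgot)\cong\pgot^-$ as a complex $K_\Lambda$-module, and the weights of $\ad(-\Lambda)$ on $\pgot^-$ all lie in $i\R_{>0}$ because $\langle\Lambda,\beta\rangle>0$ for every $\beta\in\Rgot_n^{+,z_o}$, which is the defining property of $\Chol$. Atiyah's computation of the equivariant index of a Dolbeault symbol deformed by such a positive linear vector field on a complex vector space (see \cite{Atiyah74}) therefore gives $\indice^{K_\Lambda}(\sigma_\pgot)=S((\pgot^-)^*)$; the Killing-form duality identifies $(\pgot^-)^*$ with $\pgot^+$ as $K$-modules, producing $S(\pgot^+)$. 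Combining the two factors yields the identity $\indice^K(\sigma_\Lambda)=V_\Lambda^K\otimes S(\pgot^+)$. The main technical obstacle is the product formula together with the homotopy replacing $\Hcal$ by the fiberwise linear vector field; once $\sigma_\Lambda$ has been put in this product form, the two individual indices are standard.
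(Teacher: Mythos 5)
Your proposal follows essentially the same route as the paper: localize near the critical orbit $K\cdot\Lambda$, put the symbol in the normal form (flag variety)$\,\times\,$(fiber $\pgot$), deform $\Hcal$ to a fiberwise linear vector field, and evaluate the two factors via Borel--Weil and Atiyah's computation of the pushed Thom symbol on a complex vector space; the paper does exactly this, using the global Cartan product $K\cdot\Lambda\times\pgot$ and the induction morphism $j_*$ with the factor $\indL(\wedge^\bullet_\C\kgot/\kgot_\Lambda\otimes\C_\Lambda)=V^K_\Lambda$ in place of your Borel--Weil statement. Two points you gloss over deserve care. First, $J_\Lambda$ does \emph{not} literally split as a product over the whole tubular neighborhood; it only agrees with the product structure $J_{K\cdot\Lambda}\times(-\ad(z_o))$ along the zero section, and one needs a separate homotopy of symbols (the paper uses the bundle maps $A_u={\rm Id}-uJ'_\Lambda J''_\Lambda$, invertible near the zero section) to pass from one complex structure to the other; likewise the replacement of $\Hcal$ by the linear field requires the quantitative estimates $\Hcal'=\Hcal''+o(\|X\|^2)$, $\|\Hcal''\|^2\geq c\|X\|^2$ to keep the characteristic set under control throughout the homotopy. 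Second, with the paper's conventions ($VX(\xi)=\frac{d}{dt}e^{-tX}\xi\vert_{t=0}$ and $\Hcal_m=(V\Phi_K(m))\vert_m$, with $\Phi_K\to\Lambda$ on the zero section) the limiting fiberwise vector field is the one generated by $+\Lambda$, not $-\Lambda$; the sign matters because Atiyah's index of the pushed Thom symbol on $\pgot^-$ gives $S(\pgot^+)$ precisely when every $T$-weight of $\pgot^-$ pairs \emph{negatively} with the generating element, which is the case for $+\Lambda$ since $\Lambda\in\Chol$. Your final answer is correct, but as written the positivity check is applied to the wrong generator.
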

  
The rest of this section is devoted to the computation of $\indice^K(\sigma_{\Lambda})$. 
A similar computation is done in Section 5.2 of \cite{pep-ENS} in the context of a 
``Spin'' quantization.

Let 
\begin{equation}
\Upsilon: \Ocal_\Lambda\longrightarrow \Ocal'_\Lambda:=K\cdot\Lambda\times \pgot
\end{equation}
be the $K$-equivariant  diffeomorphism defined by : $\Upsilon(g\cdot\Lambda)=(k\cdot\Lambda, X)$
where $g=e^X k$, with $k\in K$ and $X\in\pgot$, is the Cartan decomposition of $g\in G$.

The data $(\Omega_\Lambda,J_\Lambda,\Lcal_\Lambda,\Hcal,\sigma_\Lambda)$,  transported to the manifold 
$\Ocal'_\Lambda$ through $\Upsilon$, is denoted 
$(\Omega'_\Lambda,J'_\Lambda,\Lcal'_\Lambda,\Hcal',\sigma'_\Lambda)$. It is easy to check that 
the line bundle $\Lcal'_\Lambda$ is the pull-back of the line bundle 
$K\times_{K_\Lambda}\C_\Lambda\to K\cdot\Lambda$ to $\Ocal'_\Lambda$.

We consider on $\Ocal'_\Lambda$ the following $K$-equivariant data:
\begin{enumerate}
\item The complex structure $J''_\Lambda$ which is the product $J_{K\cdot\Lambda}\times -{\rm ad}(z_o)$. Here 
$J_{K\cdot\Lambda}$ is the restriction of $J_\Lambda$ to the K\"ahler submanifold 
$K\cdot\Lambda\subset G\cdot\Lambda$, and 
${\rm ad}(z_o)$ is the complex structure on $\pgot$ defined in the introduction.

\item The vector field $\Hcal''$ defined by: $\Hcal''_{\xi, X}=-(0,[\xi,X])$ for $\xi\in K\cdot\Lambda$ and 
$X\in\pgot$.
\end{enumerate}

\begin{defi}\label{defi-push}
We consider on $\Ocal'_\Lambda$ the symbols:
\begin{itemize} 
\item $\tau''_\Lambda:=\Thom(\Ocal'_\Lambda,J''_\Lambda)\otimes \Lcal'_\Lambda$,
\item $\sigma''_\Lambda$ which is the symbol $\tau''_\Lambda$ pushed by the vector field $\Hcal''$ 
(see Def. \ref{def:sigma-Lambda}). 
\end{itemize}
\end{defi}

\begin{prop}\label{prop:sigma-sigma}
$\bullet$ The symbol $\sigma''_\Lambda$ is a $K$-transversaly elliptic symbol on $\Ocal'_\Lambda$.

$\bullet$ If $\Ucal$ is a sufficiently small $K$-invariant neighborhood  of $K\cdot\Lambda\times \{0\}$ in 
$\Ocal'_\Lambda$, the restrictions $\sigma'_\Lambda\vert_\Ucal$ and $\sigma''_\Lambda\vert_\Ucal$ 
define the same class in $\K_K(\T_K\Ucal)$.
\end{prop}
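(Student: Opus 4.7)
For the first claim, I would observe that $\Hcal''_{(\xi, X)} = -(0, [\xi, X])$ coincides with the value at $(\xi, X)$ of the vector field $V\xi$ generated by $\xi \in \kgot$ (via $\kgot \simeq \kgot^*$) under the diagonal $K$-action on $K\cdot\Lambda \times \pgot$; in particular $\Hcal''$ is tangent to the $K$-orbits. The general analysis recalled in Section \ref{sec:sigma-L} then identifies $\Char(\sigma''_\Lambda) \cap \T_K\Ocal'_\Lambda$ with the zero set of $\Hcal''$, namely
\[
\{(\xi, X) \in K\cdot\Lambda \times \pgot \ :\ [\xi, X] = 0\}.
\]
Because $\Lambda \in \Chol$ is strongly elliptic, $\ggot_\xi = \kgot_\xi$ for every $\xi \in K\cdot\Lambda$, hence $\pgot \cap \ggot_\xi = \{0\}$, forcing $X = 0$. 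The critical locus reduces to the compact submanifold $K\cdot\Lambda \times \{0\}$, and $\sigma''_\Lambda$ is thus $K$-transversally elliptic.

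For the second claim, the strategy is to show that the two data $(J'_\Lambda, \Hcal')$ and $(J''_\Lambda, \Hcal'')$ agree along $K\cdot\Lambda \times \{0\}$, and then to deform one symbol into the other through $K$-transversally elliptic symbols on a small neighborhood. At $(\Lambda, 0)$, the tangent space decomposes as $\T_\Lambda(K\cdot\Lambda) \oplus \pgot$, and the description of $\T^{1,0}\Ocal_\Lambda\vert_\Lambda = \sum_{\alpha\in\Rgot_c^+,\, \langle\alpha,\Lambda\rangle\neq 0}\ggot_\alpha + \pgot^-$ from the introduction identifies $J_\Lambda\vert_\Lambda$ (pushed forward by $\Upsilon$) with $J_{K\cdot\Lambda}\vert_\Lambda \oplus (-\ad(z_o))$, which is exactly $J''_\Lambda\vert_{(\Lambda, 0)}$; $K$-equivariance then extends this identity to all of $K\cdot\Lambda \times \{0\}$. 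For the vector fields, a short computation gives $\Phi_K \circ \Upsilon^{-1}(\xi, X) = \xi + \tfrac{1}{2}[X, [X, \xi]] + O(X^3)$, from which $\Hcal'_{(\xi, X)} = (O(X^2), -[\xi, X] + O(X^3))$; thus $\Hcal'$ and $\Hcal''$ both vanish on $K\cdot\Lambda \times \{0\}$ and share the same normal derivative $X \mapsto (0, -[\xi, X])$, invertible on $\pgot$ by strong ellipticity.

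To conclude, I would fix a $K$-invariant relatively compact neighborhood $\Ucal$ of $K\cdot\Lambda \times \{0\}$, and interpolate by $\Hcal_t := (1-t)\Hcal'' + t\Hcal'$ (still tangent to $K$-orbits by linearity) together with a smooth family $J_t$ of $K$-equivariant almost complex structures joining $J''_\Lambda$ and $J'_\Lambda$ (existing on $\Ucal$ small enough, e.g.\ by polar-normalizing the linear interpolation, since $J'_\Lambda = J''_\Lambda$ on $K\cdot\Lambda \times \{0\}$). Setting
\[
\sigma_t := \Thom(\Ucal, J_t)(m, v - \Hcal_t) \otimes \Lcal'_\Lambda,
\]
the main technical obstacle is verifying that each $\sigma_t$ is $K$-transversally elliptic on $\Ucal$, uniformly in $t$. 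This reduces to showing that the zero set of $\Hcal_t$ is a compact subset of $\Ucal$. But $\Hcal_t$ vanishes on $K\cdot\Lambda \times \{0\}$ and has the common non-degenerate normal derivative $(0, -\ad(\xi))$, so the implicit function theorem forces, for $\Ucal$ sufficiently small, $\{\Hcal_t = 0\} = K\cdot\Lambda \times \{0\}$ for every $t \in [0,1]$. Homotopy invariance of $\K_K(\T_K\Ucal)$ then yields the equality $[\sigma'_\Lambda\vert_\Ucal] = [\sigma''_\Lambda\vert_\Ucal]$.
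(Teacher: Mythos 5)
Your proof is correct and follows essentially the same route as the paper: identify $\Char(\sigma''_\Lambda)\cap\T_K\Ocal'_\Lambda$ with the zero set of $\Hcal''$ via tangency to the $K$-orbits (with strong ellipticity of $\Lambda$ giving $\ker(\ad(\xi))\cap\pgot=\{0\}$), then exploit the agreement of $(J'_\Lambda,\Hcal')$ and $(J''_\Lambda,\Hcal'')$ to first order in the normal direction along $K\cdot\Lambda\times\{0\}$ to run a homotopy of transversally elliptic symbols on a small invariant neighborhood. The only cosmetic difference is that the paper performs the deformation in two separate steps --- first the vector field with $J'_\Lambda$ fixed, then the almost complex structure via the explicit bundle isomorphism $A_u={\rm Id}-uJ'_\Lambda J''_\Lambda$ --- whereas you deform both simultaneously; your expansion of $\Phi_K$ and the invertibility of $X\mapsto -[\xi,X]$ on $\pgot$ are exactly the content of the estimate (\ref{eq:H-prime}) that the paper uses to keep the characteristic sets in a fixed compact set.
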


\begin{proof} The first point is due to the fact that the vector field $\Hcal''$ is tangent to the $K$-orbits 
in $\Ocal'_\Lambda$. Hence 
\begin{eqnarray*}
\Char(\sigma''_\Lambda)\cap\T_K\Ocal'_\Lambda 
&\simeq &\{(\xi,X)\in \Ocal'_\Lambda\ \vert \ \Hcal''_{\xi, X}=0\}\\
&=& K\cdot\Lambda\times \{0\}.
\end{eqnarray*}
Here we have used that $[\xi,X]=0$ for $\xi\in K\cdot\Lambda$ and 
$X\in\pgot$ if and only if $X=0$.

We will prove the second point by using some homotopy arguments. First we consider 
the family of vector fields $\Hcal_t:= (1-t)\Hcal'+t\Hcal''$, $t\in[0,1]$. 
Let $\sigma_t$ be the symbol $\tau'_\Lambda$ pushed by $\Hcal_t$. On checks easily 
that there exists $c>0$ such that 
\begin{equation}\label{eq:H-prime}
\Hcal'_{\xi,X}= \Hcal''_{\xi,X} + o(\parallel X\parallel^2)
\quad {\rm and}\quad 
\parallel\Hcal''_{\xi,X}\parallel^2\geq c \parallel X\parallel^2
\end{equation} 
holds on $\Ocal'_\Lambda$. With the help of (\ref{eq:H-prime}) it is now easy to prove that 
there exists a $K$-invariant neighborhood $\Vcal$  of $K\cdot\Lambda\times \{0\}$ in 
$\Ocal'_\Lambda$ such that 
$$
\Char(\sigma_t\vert_\Vcal)\cap \T_K\Vcal = K\cdot\Lambda\times \{0\}.
$$
for any $t\in[0,1]$. Hence $\sigma_\Lambda'\vert_\Ucal=\sigma_0\vert_\Ucal$ defines the 
same class than $\sigma_1\vert_\Ucal$ in  $\K_K(\T_K\Ucal)$ for any $K$-invariant 
neighborhood $\Ucal$  of $K\cdot\Lambda\times \{0\}$ that is contained in $\Vcal$.

In order to compare the symbols $\sigma''_\Lambda\vert_\Ucal$ and $\sigma_1\vert_\Ucal$, 
we use a deformation argument similar to the one that we use in the proof of 
Lemma 2.2 in \cite{pep-RR}.

Note first that the complex structures $J'_\Lambda$ and $J''_\Lambda$ are {\em equal} on 
$K\cdot\Lambda\times\{0\}\subset \Ocal'_\Lambda$.  We consider the 
family of equivariant bundle maps  $A_u\in \Gamma(\Ocal'_\Lambda,\End(\T\Ocal'_\Lambda)),\ u\in [0,1]$, 
defined by
$$
A_u:= {\rm Id} - u J'_\Lambda J''_\Lambda.
$$
Since $A_u=(1+u){\rm Id}$ on $K\cdot\Lambda\times\{0\}$, there exists 
a $K$-invariant neighborhood $\Ucal$  of $K\cdot\Lambda\times \{0\}$ (contained in $\Vcal$), 
such that $A_u$ is invertible over $\Ucal$ for any $u\in [0,1]$.

Thus $A_u, u\in [0,1]$ defines over $\Ucal$ a family of bundle isomorphisms :  
$A_0={\rm Id}$ and the map $A_1$ is a bundle complex isomorphism 
$$
\underline{A_1}: (\T \Ucal,J''_\Lambda)\longrightarrow(\T \Ucal,J'_\Lambda).
$$
We extend $\underline{A_1}$ to a complex isomorphism 
$\underline{A_1^{\wedge}}: \wedge_{J''_\Lambda}\T \Ucal \longrightarrow\wedge_{J'_\Lambda}\T \Ucal$.
Then $\underline{A_1^{\wedge}}$ induces an isomorphism between the symbols
$\Thom(\Ucal,J''_\Lambda)$ and  $\underline{A_1}^{*}(\Thom(\Ucal,J'_\Lambda))
\ :\ (x,v)\mapsto \Thom(\Ucal,J'_\Lambda)(x,A_1(x)v)$. In the same way 
$\underline{A_1^{\wedge}}$ induces an isomorphism between the symbols
$\sigma''_\Lambda\vert_\Ucal$ and $\underline{A_1}^{*}(\sigma_1\vert_\Ucal)
\ :\ (x,v)\mapsto \tau'_\Lambda(x,A_1(x)(v-\Hcal''_x))$. One checks easily that 
$\underline{A_u}^{*}(\sigma_1\vert_\Ucal), u\in [0,1]$ is an homotopy of transversaly elliptic symbols.

Finally we have proved that $\sigma''_\Lambda\vert_\Ucal$, $\sigma_1\vert_\Ucal$
and  $\sigma'_\Lambda\vert_\Ucal$ define the same class in \break $\K_K(\T_K\Ucal)$.
\end{proof}

\medskip

Here also, the equivariant index of the transversaly elliptic symbol $\sigma_\Lambda''$ 
can be defined either as the $\indice^K_{\Ocal_\Lambda'}(\sigma_{\Lambda}'')$ taken on 
$\Ocal_\Lambda'$, or as the index $\indice_{\Ucal}^K(\sigma_{\Lambda}''\vert_{\Ucal})$ 
taken on any $K$-invariant relatively compact open neighborhood $\Ucal\subset \Ocal_\Lambda'$ 
of $K\cdot\Lambda\times\{0\}$. We denote simply 
$$
\indice^K(\sigma_{\Lambda}'')\in\Rforc(K).
$$
the equivariant index of $\sigma_\Lambda''$. The second point of Proposition \ref{prop:sigma-sigma} 
shows that 
$\indice_{\Ucal}^K(\sigma_{\Lambda}''\vert_{\Ucal})=\indice_{\Ucal}^K(\sigma_{\Lambda}'\vert_{\Ucal})$. 
Hence we know that 
$$
\indice^K(\sigma_{\Lambda})=\indice^K(\sigma''_\Lambda).
$$

In order to compute $\indice^K(\sigma''_\Lambda)$, we use the induction
morphism 
$$
j_*:\K_{K_\Lambda}(\T_{K_\Lambda}\pgot)\longrightarrow\K_{K}(\T_{K}(\Ocal'_\Lambda))
$$ 
defined by Atiyah in \cite{Atiyah74} (see also \cite{pep-RR}[Section 3]). 
The map $j_*$ enjoys two properties: first, $j_*$ is an isomorphism 
and the $K$-index of $\sigma\in\K_{K}(\T_{K}(\Ocal'_\Lambda))$ 
can be computed via the $K_\Lambda$-index of $j_*^{-1}(\sigma)$.

Let $\sigma : p^*(E^+)\to p^*(E^-)$ be a $K$-transversaly elliptic 
symbol on $\Ocal'_\Lambda$, where $p:\T\Ocal'_\Lambda\to 
\Ocal'_\Lambda$ is the projection, and $E^{+},E^{-}$ are equivariant 
vector bundles over $\Ocal'_\Lambda$. So for any $(\xi, X)\in 
K\cdot\Lambda\times\pgot$, we have a collection of linear maps 
$\sigma(\xi,X;v,Y):E^{+}_{(\xi, X)}\to E^{-}_{(\xi, X)}$ depending on the 
tangent vectors $(v,Y)\in \T_\xi (K\cdot\Lambda)\times\pgot$. The $K_\Lambda$-equivariant 
symbol $j_*^{-1}(\sigma)$ is defined 
by
\begin{equation}\label{eq.def.i.star}
j_*^{-1}(\sigma)(X,Y)=\sigma(\Lambda,X; 0,Y):E^{+}_{(\Lambda,X)}
\longrightarrow  E^{-}_{(\Lambda,X)}\quad {\rm for\ any}\quad (X,Y)\in\T\pgot.
\end{equation}

In the case of the symbol $\sigma''_\Lambda$, the super vector bundle
$E^{+}\oplus E^{-}$ over $\Ocal'_\Lambda$ is 
$\wedge^{\bullet}_{J''_\Lambda}\T\Ocal'_\Lambda \otimes \Lcal'_\Lambda$. 
For any $X\in\pgot$, the super vector space  $E^{+}_{(\Lambda,X)}\oplus E^{-}_{(\Lambda,X)}$ is 
equal to 
$$
\wedge^{\bullet}_\C\pgot^- \otimes\wedge^{\bullet}_\C\kgot/\kgot_\Lambda\otimes\C_\Lambda.
$$

Let $\Thom(\pgot^-)$ be the Thom symbol of the complex vector space $\pgot^-\simeq (\pgot,-\ad(z_o))$. 
Let $\tilde\Lambda$ be the vector field on $\pgot^-$ which is generated by $\Lambda\in\kgot^*\simeq \kgot$. 
Let 
$$
\Thom^{\Lambda}(\pgot^-)
$$ 
be the symbol $\Thom(\pgot^-)$ pushed by the 
vector field $\tilde\Lambda$ (see Definition \ref{defi-push}). 
Since the vector field $\tilde\Lambda$ vanishes only at $0\in \pgot^-$, the symbol 
$\Thom^{\Lambda}(\pgot^-)$ is $K_\Lambda$-transversaly elliptic. We have 
\begin{equation}\label{eq:j-sigma-Lambda}
(j_*)^{-1}(\sigma''_\Lambda)=\Thom^{\Lambda}(\pgot^-)\, 
\otimes\wedge^{\bullet}_\C\kgot/\kgot_\Lambda\otimes\C_\Lambda.
\end{equation}

In (\ref{eq:j-sigma-Lambda}), 
our notation uses the structure of $R(K_\Lambda)$-module for
$\K_{K_\Lambda}(\T_{K_\Lambda}\pgot)$, hence we can multiply 
$\Thom^{\Lambda}(\pgot^-)$ by 
$\wedge^{\bullet}_\C\kgot/\kgot_\Lambda\otimes\C_\Lambda$. 

Let $\fgene(K_\Lambda)^{K_\Lambda}$, $\fgene(K)^K$ be respectively the vector spaces  of generalized 
functions on $K_\Lambda$ and $K$ which are invariant relative to the conjugation action. Let  
\begin{equation}\label{eq.induction}
    \indL : \fgene(K_\Lambda)^{K_\Lambda}\longrightarrow \fgene(K)^{K}\ .   
\end{equation}
be the induction map that is defined as follows : for $\phi\in\fgene(K_\Lambda)^{K_\Lambda}$, 
we have 
$$
\int_{K}\indL(\phi)(k)f(k)dk
=\frac{\Vol(K,dk)}{\Vol(K_\Lambda,dk')}\int_{K_\Lambda}\phi(k')f |_{K_\Lambda}(k')dk',
$$ 
for every $f\in\f(K)^{K}$.

Theorem 4.1 of Atiyah in \cite{Atiyah74} tells us that 
\begin{equation}\label{diagram}
\xymatrix{
K_{K_\Lambda}(\T_{K_\Lambda}\pgot)\ar[r]^{j_{*}}\ar[d]_{\indice^{K_\Lambda}} & 
K_{K}(\T_{K}\Ocal'_\Lambda)\ar[d]^{\indice^K}\\
\fgene(K_\Lambda)^{K_\Lambda}\ar[r]_{\indL} &  \fgene(K)^{K}\ .
   }
\end{equation}
is a commutative diagram\footnote{Here we look at $\Rforc(K_\Lambda)$ and $\Rforc(K)$ as subspaces of 
$\fgene(K_\Lambda)^{K_\Lambda}$ and  $\fgene(K)^{K}$ by using the {\em trace map} (see \ref{eq:trace}).}. 
In other words, $\indice^K(\sigma)=\indL(\indice^{K_\Lambda}(j_*^{-1}(\sigma)))$.
With (\ref{eq:j-sigma-Lambda}) we get 
$$
\indice^K(\sigma''_\Lambda)=\indL\left(\indice^{K_\Lambda}(\Thom^{\Lambda}(\pgot^-)) 
\otimes\wedge^{\bullet}_\C\kgot/\kgot_\Lambda\otimes\C_\Lambda\right).
$$

We know from \cite{pep-RR}[Section 5.1] that the $K_\Lambda$-index of $\Thom^{\Lambda}(\pgot^-)$ is equal 
to the symmetric algebra $S(\pgot^+)$ viewed as a $K_\Lambda$-module.  Since $S(\pgot^+)$ is 
a $K$-module, we have  
\begin{eqnarray*}
\indice^K(\sigma''_\Lambda)&=&\indL\left(S(\pgot^+)\vert_{K_\Lambda}
\otimes\wedge^{\bullet}_\C\kgot/\kgot_\Lambda\otimes\C_\Lambda\right)\\
&=& S(\pgot^+)\otimes\indL\left(\wedge^{\bullet}_\C\kgot/\kgot_\Lambda\otimes\C_\Lambda\right)\\
&=& S(\pgot^+)\otimes V^K_\Lambda.
\end{eqnarray*}

The proof of Proposition \ref{prop-indice-sigma-Lambda} is then completed. See the Appendix in 
\cite{pep-RR} for the relation $\indL\left(\wedge^{\bullet}_\C\kgot/\kgot_\Lambda\otimes\C_\Lambda\right)
=V^K_\Lambda$.

%%%%%%%%%%%%%%%%%%%%%%%%%%%%%%%%%%%%%%%%%%%%%%%%%%%%%%%%%%%%%%%%%%%%%%%%%%%%%%%%%%%%%%%%%%%%%
\subsection{Computation of $\indice^K(\sigma_\Lambda)$: the shifting trick}\label{sec:deformation}
%%%%%%%%%%%%%%%%%%%%%%%%%%%%%%%%%%%%%%%%%%%%%%%%%%%%%%%%%%%%%%%%%%%%%%%%%%%%%%%%%%%%%%%%%%%%%

This section is devoted to the proof of the following 

\begin{prop}\label{prop:formal-lambda}
Let $\Ocal_\Lambda$ be the coadjoint orbit passing through $\Lambda\in\Chol$. 

For any $\mu\in\what{K}$, the multiplicity of 
$V_\mu^K$ in $\indice^K(\sigma_\Lambda)$ is equal to $\Qcal((\Ocal_\Lambda)_\mu)$. In other words 
we have 
$$
\indice^K(\sigma_\Lambda)=\qfor_K(\Ocal_\Lambda).
$$
\end{prop}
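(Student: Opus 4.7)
The plan is to use the shifting trick combined with the compact quantization-commutes-with-reduction theorem and a localization argument. To prove that the multiplicity of $V_\mu^K$ in $\indice^K(\sigma_\Lambda)$ equals $\Qcal((\Ocal_\Lambda)_\mu)$ for each $\mu \in \what{K}$, I would form the shift-product
$$
Z_\mu := \Ocal_\Lambda \times \overline{K\cdot\mu},
$$
equipped with symplectic form $\Omega_\Lambda \oplus (-\Omega_\mu)$, pre-quantum bundle $\Lcal_\Lambda \boxtimes \Lcal_\mu^*$, and moment map $\Phi_{Z_\mu}(m,\xi) := \Phi_K(m) - \xi$. The reduced space at $0$ is canonically identified with $(\Ocal_\Lambda)_\mu$ as a pre-quantized symplectic orbifold, so $\Qcal((Z_\mu)_0) = \Qcal((\Ocal_\Lambda)_\mu)$.

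On $Z_\mu$ I introduce the Thom symbol twisted by $\Lcal_\Lambda \boxtimes \Lcal_\mu^*$ and pushed by the Kirwan vector field of $-\tfrac{1}{2}\|\Phi_{Z_\mu}\|^2$, obtaining a symbol $\sigma_{Z_\mu}$ as in Definition \ref{def:sigma-Lambda}. The compactness of $\Cr(\|\Phi_{Z_\mu}\|^2)$ follows from Lemma \ref{lem:critical}, the compactness of $K\cdot\mu$, and the properness of $\Phi_K$, so $\sigma_{Z_\mu}$ is $K$-transversally elliptic. By a homotopy argument analogous to Proposition \ref{prop:sigma-sigma}, the Kirwan vector field on $Z_\mu$ can be deformed into the sum of the Kirwan vector field on $\Ocal_\Lambda$ and its analogue on the compact factor $\overline{K\cdot\mu}$ (which itself is homotopic to $0$ since that factor is compact); this yields the multiplicative identity
$$
\indice^K(\sigma_{Z_\mu}) = \indice^K(\sigma_\Lambda) \otimes \Qcal_K(\overline{K\cdot\mu}) = \indice^K(\sigma_\Lambda) \otimes (V_\mu^K)^*
$$
in $\Rforc(K)$. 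Taking $K$-invariants identifies $\bigl[\indice^K(\sigma_{Z_\mu})\bigr]^K$ with the multiplicity $\bigl[\indice^K(\sigma_\Lambda) : V_\mu^K\bigr]$, so the proposition reduces to the equality
$$
\bigl[\indice^K(\sigma_{Z_\mu})\bigr]^K = \Qcal((Z_\mu)_0).
$$

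The main obstacle lies in proving this last identity. My approach is to use excision to restrict $\sigma_{Z_\mu}$ to a small $K$-invariant relatively compact open neighborhood $\Ucal$ of $\Cr(\|\Phi_{Z_\mu}\|^2)$, then embed a neighborhood of $\Phi_{Z_\mu}^{-1}(0) \subset \Ucal$ $K$-equivariantly into a compact pre-quantized Hamiltonian $K$-manifold $\widetilde{Z}$ whose moment map and pre-quantum data agree with those of $Z_\mu$ near $\Phi_{Z_\mu}^{-1}(0)$, so that $\widetilde{Z}_0 \simeq (Z_\mu)_0$ as pre-quantized symplectic orbifolds. Theorem \ref{theo:Q-R} applied to $\widetilde{Z}$ then gives $\bigl[\Qcal_K(\widetilde{Z})\bigr]^K = \Qcal(\widetilde{Z}_0) = \Qcal((Z_\mu)_0)$. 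To conclude, I must show that the contributions to $\indice^K(\sigma_{Z_\mu})$ coming from critical components of $\|\Phi_{Z_\mu}\|^2$ lying \emph{outside} $\Phi_{Z_\mu}^{-1}(0)$ carry no trivial $K$-isotype, so that their $K$-invariant part vanishes. This is exactly what the Witten-type localization formula for transversally elliptic symbols developed in \cite{pep-RR} provides: each critical component along the set $\{\Phi_{Z_\mu} = K\cdot \beta\}$ with $\beta \neq 0$ contributes a virtual $K$-character whose weights are concentrated away from $0$, and in particular its trivial isotype is zero. Combining this vanishing with the compact case of quantization commutes with reduction yields the required equality and completes the proof.
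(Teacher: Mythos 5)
Your overall strategy is the one the paper follows: the shifting trick realizes the multiplicity of $V_\mu^K$ as the $K$-invariant part of the index of a pushed Thom symbol on $Z_\mu=\Ocal_\Lambda\times\overline{K\cdot\mu}$, a homotopy trades the vector field $(\Hcal,0)$ for the Kirwan vector field of the shifted moment map $\Phi_1(m,\xi)=\Phi_K(m)-\xi$, and the localization theorem of \cite{pep-RR,pep-formal} (Theorem \ref{theo:loc-index}) identifies the invariant part of the resulting index with $\Qcal((\Ocal_\Lambda)_\mu)$. Your sketch of that last theorem (excision to a neighborhood of $\Phi_1^{-1}(0)$, comparison with a compact model, vanishing of the trivial isotype of the contributions of the components $\{\Phi_1=K\cdot\beta\}$ with $\beta\neq 0$) is indeed what lies behind it, so invoking it as a citation, as the paper does, is legitimate.

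The one place where your argument has a genuine gap is the compactness claim. You assert that $\Cr(\parallel\Phi_{Z_\mu}\parallel^2)$ is compact ``by Lemma \ref{lem:critical}, the compactness of $K\cdot\mu$, and the properness of $\Phi_K$''. That does not follow formally: a critical point of $\parallel\Phi_1\parallel^2$ is a pair $(m,\xi)$ fixed by the element $\beta=\Phi_K(m)-\xi$ of $\kgot$, and properness of $\Phi_K$ alone does not rule out a sequence of such pairs with $\parallel\Phi_K(m_i)\parallel\to\infty$; Lemma \ref{lem:critical} only describes the \emph{unshifted} critical set $K\cdot\Lambda$. Worse, your homotopy needs a uniform statement: the critical sets of the whole family $\parallel\Phi_t\parallel^2$, with $\Phi_t(m,\xi)=\Phi_K(m)-t\xi$, must stay in a single compact subset for all $t\in[0,1]$, otherwise the family of pushed symbols is not a homotopy of transversally elliptic symbols and the multiplicative identity $\indice^K(\sigma_{Z_\mu})=\indice^K(\sigma_\Lambda)\otimes (V_\mu^K)^*$ you extract from it is unjustified. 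This uniform compactness is precisely the first point of Proposition \ref{prop:C-t-compact}, the delicate step of the proof, which the paper establishes in Appendix A by exploiting the structure of the orbit: there are finitely many centralizer subalgebras $\ggot_i$ of elements of $\tgot$, each intersection $\Ocal_\Lambda\cap\ggot_i$ is a finite union of $G_i$-orbits, and the projection onto the center of $\ggot_i$ is constant on each such orbit, which confines the relevant critical points to $\Ocal_\Lambda\cap\pi_\kgot^{-1}(C)$ for a compact $C\subset\tgot$ and then uses properness. You need to supply this (or the variant of \cite{pep-ENS}) before the deformation step can be carried out.
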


The proof, which follows the same line of Section 4.1 in \cite{pep-ENS}, starts 
with the classical ``shifting trick''. For any $V\in\Rfor(K)$, we denote $[V]^K\in\Z$ 
the multiplicity of the trivial representation in $V$.

By definition the multiplicity $\mm_\Lambda(\mu)$ of $V_\mu^K$ in $\indice^K(\sigma_\Lambda)$ 
is equal to \break 
$[\indice^K(\sigma_\Lambda)\otimes (V_{\mu}^K)^*]^K$,
where $(V_{\mu}^K)^*$ is the (complex) dual of $V_\mu^K$. 
The Borel-Weil Theorem tells us that the representation $V_{\mu}^K$ is equal to the 
$K$-equivariant Riemann-Roch character 
$$
RR^{^K}(K\cdot\mu,\C_{[\mu]}),
$$
where $\C_{[\mu]}\simeq K\times_{K_\mu} \C_\mu$ is the prequantum line bundle over the coadjoint orbit
$K\cdot\mu$. Note that $K\cdot\mu$ is equipped with the K\"ahler structure 
$(\Omega_{K\cdot\mu},J_{K\cdot\mu})$ where $\Omega_{K\cdot\mu}$ is the 
Kirillov-Kostant-Souriau symplectic form, and $J_{K\cdot\mu}$  is the $K$-invariant 
compatible (integrable) complex structure.

Hence the dual $(V_{\mu}^K)^*$ is equal to 
$RR^{^K}(\overline{K\cdot\mu},\C_{[-\mu]})$, 
where $\overline{K\cdot\mu}$ is the coadjoint orbit 
$K\cdot\mu$ equipped with the opposite K\"ahler structure 
$(-\Omega_{K\cdot\mu},-J_{K\cdot\mu})$. Let $\Thom(\overline{K\cdot\mu})$ 
be the equivariant Thom symbol on $(K\cdot\mu,-J_{K\cdot\mu})$. 
Then $(V_{\mu}^K)^*$ is equal to $\indice^K_{K\cdot\mu}(
\Thom(\overline{K\cdot\mu})\otimes \C_{[-\mu]})$. 

Let $\Thom(\Ocal_\Lambda)$ be the Thom symbol on $(\Ocal_\Lambda,J_\Lambda)$. Like in section 
\ref{sec:sigma-L}, let $\Hcal$ be the 
Hamiltonian vector field  of $\frac{-1}{2}\parallel\Phi_K\parallel^2: \Ocal_\Lambda\to\R$. 
We denote by  $\Thom^{\Hcal}(\Ocal_\Lambda)$ the symbol  $\Thom(\Ocal_\Lambda)$ pushed by the 
vector field $\Hcal$ :
$$
\Thom^{\Hcal}(\Ocal_\Lambda)(m,v):=\Thom(\Ocal_\Lambda)(m,v-\Hcal_{m}), \quad
(m,v)\in\T \Ocal_\Lambda.
$$ 

Since $\indice^K(\sigma_\Lambda)$ is equal to $\indice^K_{\Ocal_\Lambda}\left(
\Thom^{\Hcal}(\Ocal_\Lambda)\otimes \Lcal_\Lambda\right)$, the multiplicative property of the index 
\cite{Atiyah74}[Theorem 3.5]  gives 
\begin{equation}\label{eq:m-mu-1}
\mm_\Lambda(\mu)=\left[
\indice^K_{\Ocal_\Lambda\times K\cdot\mu}\left(
(\Thom^{\Hcal}(\Ocal_\Lambda)\otimes \Lcal_\Lambda)
\odot
(\Thom(\overline{K\cdot\mu})\otimes \C_{[-\mu]})
\right)\right]^K\ .
\end{equation}
See \cite{Atiyah74,pep-RR}, for the definition of the exterior product 
$$\odot:\K_K(\T_K \Ocal_\Lambda)\times\K_K(\T(K\cdot\mu))\to 
\K_K(\T_K (\Ocal_\Lambda\times K\cdot\mu)).
$$

It is easy to check that the product $\Thom(\Ocal_\Lambda)\odot
\Thom(\overline{K\cdot\mu})$ is equal to the Thom symbol 
$\Thom(\Ocal_\Lambda\times\overline{K\cdot\mu})$ on the manifold 
$(\Ocal_\Lambda\times\overline{K\cdot\mu}, J_\Lambda\times -J_{K\cdot \mu})$. 
Hence the product $\Thom^{\Hcal}(\Ocal_\Lambda)\odot
\Thom(\overline{K\cdot\mu})$ is equal to the Thom symbol \break 
$\Thom(\Ocal_\Lambda\times\overline{K\cdot\mu})$ pushed by the vector field $(\Hcal, 0)$ : let us denote 
it \break $\Thom^{(\Hcal,0)}(\Ocal_\Lambda\times\overline{K\cdot\mu})$.

The tensor product 
$$
\Lcal:=\Lcal_\Lambda\otimes \C_{[-\mu]}
$$
is a prequantum line bundle over the symplectic manifold $\Ocal_\Lambda\times\overline{K\cdot\mu}$.

Finally (\ref{eq:m-mu-1}) can be rewritten as 
\begin{equation}\label{eq:m-mu-2}
\mm_\Lambda(\mu)=\left[
\indice^K_{\Ocal_\Lambda\times K\cdot\mu}\left(
\Thom^{(\Hcal,0)}(\Ocal_\Lambda\times\overline{K\cdot\mu})\otimes \Lcal \right)\right]^K\ .
\end{equation}

The moment map relative to the Hamiltonian $K$-action on 
$\Ocal_\Lambda\times\overline{K\cdot\mu}$ is 
\begin{eqnarray}\label{Phi-mu}
  \Phi_{1}:\Ocal_\Lambda\times\overline{K\cdot\mu}&\longrightarrow&\kgot^*\nonumber\\
  (m,\xi)&\longmapsto &\Phi_K(m)-\xi 
\end{eqnarray}

For any $t\in\R$, we consider the map $\Phi_{t}:
\Ocal_\Lambda\times\overline{K\cdot\mu}\to \kgot^{*},\ 
 \Phi_{t}(m,\xi):=\Phi(m)-t\,\xi$. Let $\Hcal_t$ be the Hamiltonian vector field 
of $\frac{-1}{2}\parallel\Phi_t\parallel^2: \Ocal_\Lambda\times\overline{K\cdot\mu}\to\R$. 
We denoted $\Thom^{\Hcal_t}(\Ocal_\Lambda\times\overline{K\cdot\mu})$ the symbol 
$\Thom(\Ocal_\Lambda\times\overline{K\cdot\mu})$ pushed by the 
vector field $\Hcal_t$.

We have the fundamental 

\begin{prop}\label{prop:C-t-compact}
$\bullet$ There exists a compact subset $\Kcal$ of $\Ocal_\Lambda$ such that 
$$
\Cr(\parallel\Phi_t\parallel^2)\subset \Kcal\times K\cdot\mu
$$
for any $t\in [0,1]$.

$\bullet$ The symbols $\Thom^{(\Hcal,0)}(\Ocal_\Lambda\times\overline{K\cdot\mu})$ and 
$\Thom^{\Hcal_t}(\Ocal_\Lambda\times\overline{K\cdot\mu}), t\in [0,1]$ are $K$-transversaly elliptic.

$\bullet$ The symbols $\Thom^{(\Hcal,0)}(\Ocal_\Lambda\times\overline{K\cdot\mu})$ and 
$\Thom^{\Hcal_t}(\Ocal_\Lambda\times\overline{K\cdot\mu}), t\in [0,1]$ define the same class in 
$\K_K(\T_K(\Ocal_\Lambda\times K\cdot\mu))$.
\end{prop}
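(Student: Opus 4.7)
The plan is to dispatch the three parts in order, with (1) being the crux and (2)--(3) following formally. For point (1), I would start from the critical-point equations: if $(m,\xi) \in \Cr(\|\Phi_t\|^2)$ and $X_t := \Phi_K(m) - t\xi \in \kgot$ (via $\kgot^*\simeq\kgot$ from the Killing form), then $V X_t$ must vanish both at $m$ on $\Ocal_\Lambda$ and at $\xi$ on $K\cdot\mu$, forcing $X_t\in\kgot_m\cap\kgot_\xi$. The triangle inequality gives $\|\Phi_K(m)\| \leq \|X_t\| + t\|\mu\|$, so by the properness of $\Phi_K$ it suffices to bound $\|X_t\|$ uniformly for $t\in[0,1]$ on the critical set. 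I would obtain this via the Morse-theoretic stratification of $\Cr(\|\Phi_t\|^2)$ by conjugacy classes of stabilizer subalgebras of $\kgot$, adapting the argument of \cite{Duflo-Heckman-Vergne, pep3} that for $t=0$ yields $\Cr(\|\Phi_K\|^2) = K\cdot\Lambda$ (Lemma \ref{lem:critical}): the perturbation coming from the compact factor $\overline{K\cdot\mu}$ is bounded uniformly in $t$, so the strata of the unperturbed problem deform continuously and remain in a compact region $\Kcal\subset\Ocal_\Lambda$ independent of $t$.

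For (2), the key observation is that $\Hcal_t$ and $(\Hcal, 0)$ are Hamiltonian vector fields for $K$-invariant functions, hence tangent to every $K$-orbit in $\Ocal_\Lambda\times\overline{K\cdot\mu}$. Exactly as in Section \ref{sec:sigma-L},
$$
\Char\bigl(\Thom^{\Hcal_t}(\Ocal_\Lambda\times\overline{K\cdot\mu})\bigr)\cap \T_K(\Ocal_\Lambda\times K\cdot\mu)\ \cong\ \Cr(\|\Phi_t\|^2),
$$
which is compact by (1); the same computation gives $\Char(\Thom^{(\Hcal,0)})\cap\T_K \cong K\cdot\Lambda\times K\cdot\mu$, which is compact. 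Transversal ellipticity is then immediate.

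For (3), the equality $\Phi_0 = \Phi_K$ gives $\Hcal_0 = (\Hcal,0)$, so the family $\{\Thom^{\Hcal_t}\}_{t\in[0,1]}$ interpolates between $\Thom^{(\Hcal,0)}$ and $\Thom^{\Hcal_1}$. By (1), I can fix a $K$-invariant relatively compact open neighborhood $\Ucal\supset \Kcal\times K\cdot\mu$ containing the characteristic sets (intersected with $\T_K$) of every member of the family. Restricted to $\Ucal$, $\{\Thom^{\Hcal_t}|_\Ucal\}$ is a continuous homotopy of $K$-transversally elliptic symbols on the fixed open set $\Ucal$, hence defines a single class in $\K_K(\T_K\Ucal)$; excision then transports this to a common class in $\K_K(\T_K(\Ocal_\Lambda\times K\cdot\mu))$.

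The main obstacle is (1): while it is believable that properness of $\Phi_K$ together with the compactness of $\overline{K\cdot\mu}$ trap critical points of the perturbed functional in a bounded region, this is not an automatic consequence of the compactness of $\Cr(\|\Phi_K\|^2)$, since a priori the perturbation could create additional critical points far from $K\cdot\Lambda$. Ruling this out must combine properness with the structural convexity of the holomorphic orbit, and must do so uniformly in $t\in[0,1]$; once this uniform bound is in hand, (2) and (3) are formal consequences of the standard machinery for transversally elliptic symbols.
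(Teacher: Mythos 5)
There are two genuine problems. First, point (1) — which you yourself identify as the crux — is not actually proved. Your reduction via the critical-point equations ($X_t:=\Phi_K(m)-t\xi$ must lie in $\kgot_m\cap\kgot_\xi$, and it suffices to bound $\|X_t\|$, hence $\|\Phi_K(m)\|$, uniformly) is correct as far as it goes, but the assertion that ``the strata of the unperturbed problem deform continuously and remain in a compact region independent of $t$'' is precisely the statement to be established, and no mechanism is given; your closing paragraph concedes this. The paper's argument (Appendix A, due to Vergne) supplies the missing mechanism, and it is purely structural rather than a deformation of the $t=0$ picture: after conjugating so that $\xi=\mu$, the condition $[\pi_\kgot(m),\mu]=0$ puts $\pi_\kgot(m)$ in $\tgot$ (up to $K_\mu$); the condition $m\in\ggot_{\pi_\kgot(m)-t\mu}$, together with the finiteness of the set of subalgebras $\{\ggot_X,\ X\in\tgot\}$, forces $\pi_\kgot(m)-t\mu$ into the center $z(\ggot_i)$ of one of finitely many $\ggot_i$; since $\Ocal_\Lambda\cap\ggot_i$ is a finite union of $G_i$-orbits on each of which the central projection $\pi_i$ is constant, one gets $\pi_\kgot(m)=\pi_i(\alpha)+t(\mu-\pi_i(\mu))$, which ranges over a compact set as $t$ runs over $[0,1]$; properness of $\pi_\kgot$ restricted to $\Ocal_\Lambda$ then concludes. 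No ``structural convexity of the holomorphic orbit'' is invoked, and uniformity in $t$ comes for free.

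Second, point (3) contains an actual error: $\Hcal_0\neq(\Hcal,0)$. Although $\Phi_0(m,\xi)=\Phi_K(m)$, the Hamiltonian vector field of $-\frac{1}{2}\|\Phi_0\|^2$ at $(m,\xi)$ is the vector field on the \emph{product} generated by the Lie algebra element $\Phi_K(m)$, namely $\bigl(V\Phi_K(m)\vert_m,\,V\Phi_K(m)\vert_\xi\bigr)$, and the second component does not vanish in general. So your homotopy $\{\Thom^{\Hcal_t}\}_{t\in[0,1]}$ only connects $\Thom^{\Hcal_0}$ to $\Thom^{\Hcal_1}$, and the identification of $\Thom^{\Hcal_0}$ with $\Thom^{(\Hcal,0)}$ is asserted on false grounds. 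The paper repairs this with a second homotopy $(\Hcal,sV)$, $s\in[0,1]$, where $V(m,\xi)=V\Phi_K(m)\vert_\xi\in\T_\xi(K\cdot\mu)$; its characteristic set stays in a compact subset because the deformation only affects the component over the compact factor $K\cdot\mu$. Your point (2) is fine, modulo the same caveat that the compactness of $\Char(\Thom^{\Hcal_t})\cap\T_K$ rests entirely on the unproven point (1).
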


\begin{proof} The proof of the first point is given in \cite{pep-ENS}[Section 5.3.]  
when $\Lambda$ is regular. In the Appendix, we propose another (simpler) proof that 
we learn from Mich\`ele Vergne. 
For the second point we check that 
$$
\Char\left(\Thom^{(\Hcal,0)}(\Ocal_\Lambda\times\overline{K\cdot\mu})\right)\cap 
\T_K\left(\Ocal_\Lambda\times\overline{K\cdot\mu})\right)\simeq
\Cr(\parallel\Phi_K\parallel^2)\times K\cdot\mu\\
$$
and 
$$
\Char\left(\Thom^{\Hcal_t}(\Ocal_\Lambda\times\overline{K\cdot\mu})\right)\cap 
\T_K\left(\Ocal_\Lambda\times\overline{K\cdot\mu})\right)\simeq
\Cr(\parallel\Phi_t\parallel^2)\\
$$
are compact subsets of the $0$-section of $\T(\Ocal_\Lambda\times\overline{K\cdot\mu})$.

Since $\Cr(\parallel\Phi_t\parallel^2)$ stay in the compact set $\Kcal\times K\cdot\mu$ 
for any $t\in[0,1]$, the family $\Thom^{\Hcal_t}(\Ocal_\Lambda\times\overline{K\cdot\mu})$ 
is an homotopy of transversaly elliptic symbol : hence 
they define the same class in 
$\K_K(\T_K(\Ocal_\Lambda\times K\cdot\mu))$.

The vector field $\Hcal_0$ on $\Ocal_\Lambda\times K\cdot\mu$ is equal to $(\Hcal, V)$ where 
$$
V(m,\xi)\in \T_\xi(K\cdot\mu)
$$
for any $ (m,\xi)\in\Ocal_\Lambda\times K\cdot\mu$. We use the deformation 
$(\Hcal, sV), s\in [0,1]$: let $\Thom^{(\Hcal, sV)}(\Ocal_\Lambda\times\overline{K\cdot\mu})$ 
be the Thom symbol pushed by the vector field $(\Hcal, sV)$. It is easy to check that 
there exists a compact subset of $\Kcal'\subset \T(\Ocal_\Lambda\times K\cdot\mu)$ such that 
$$
\Char\left(\Thom^{(\Hcal, sV)}(\Ocal_\Lambda\times\overline{K\cdot\mu})\right)\cap 
\T_K\left(\Ocal_\Lambda\times\overline{K\cdot\mu})\right)\subset \Kcal'
$$
for any $s\in [0,1]$. The family 
$\Thom^{(\Hcal, sV)}(\Ocal_\Lambda\times\overline{K\cdot\mu}), s\in[0,1]$ is then 
an homotopy of transversaly elliptic symbols : hence 
$\Thom^{(\Hcal, 0)}(\Ocal_\Lambda\times\overline{K\cdot\mu})$ and \break 
$\Thom^{\Hcal_0}(\Ocal_\Lambda\times\overline{K\cdot\mu})$ define the same class in 
$\K_K(\T_K(\Ocal_\Lambda\times K\cdot\mu))$. 
\end{proof}

Following the former proposition and (\ref{eq:m-mu-2}), we have 
\begin{equation}\label{eq:m-mu-3}
\mm_\Lambda(\mu)=\left[
\indice^K_{\Ocal_\Lambda\times K\cdot\mu}\left(
\Thom^{\Hcal_1}(\Ocal_\Lambda\times\overline{K\cdot\mu})\otimes \Lcal \right)\right]^K\ .
\end{equation}

We are  now in the following setting : 

$\bullet$ $\Xcal:=\Ocal_\Lambda\times\overline{K\cdot\mu}$ is a Hamiltonian 
$K$-manifold with a {\em proper} moment map $\Phi_1:\Xcal\to\kgot^*$,

$\bullet$ $\Lcal$ is a prequantum line bundle over $\Xcal$,

$\bullet$ the Hamiltonian vector field 
$\Hcal_1$ of the function $\frac{-1}{2}\parallel\Phi_1\parallel^2$ vanishes on a compact subset. 

Hence the ``pushed'' Thom symbol $\Thom^{\Hcal_1}(\Xcal)$ 
is $K$-transversaly elliptic on $\Xcal$. In this context we can consider 
the equivariant index $\indice^K(\Thom^{\Hcal_1}(\Xcal)\otimes\Lcal)$, and 
we have the following theorem 

\begin{theo}[\cite{pep-RR, pep-formal}]\label{theo:loc-index}
The multiplicity of the trivial representation in \break $\indice^K(\Thom^{\Hcal_1}(\Xcal)\otimes\Lcal)$ 
is equal to $\Qcal(\Xcal_0)$, where $\Xcal_0$ is the (compact) symplectic reduction of $\Xcal$ at $0$.
\end{theo}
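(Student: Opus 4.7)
The plan is to localize the equivariant index on the compact critical set $Z:=\Cr(\|\Phi_1\|^2)$ where $\Hcal_1$ vanishes, and then analyze the trivial-representation multiplicity contributed by each component of $Z$. By a classical Kirwan-type description, $Z$ decomposes as a finite disjoint union
$$
Z = \bigsqcup_{\beta\in\Bcal} C_\beta,
$$
where $\Bcal\subset \tgot^*_+$ is the finite set of $\beta$ such that the $K$-orbit $K\cdot\beta$ meets $\Phi_1(\Xcal)$, and where $C_\beta = K\cdot(\Xcal^{V\beta}\cap \Phi_1^{-1}(\beta))$. In particular $0\in\Bcal$ with $C_0 = \Phi_1^{-1}(0)$, which is compact since $\Phi_1$ is proper.

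Using the excision property for transversally elliptic symbols, I would choose mutually disjoint, relatively compact $K$-invariant open neighborhoods $\Ucal_\beta$ of each $C_\beta$, and write
$$
\indice^K\bigl(\Thom^{\Hcal_1}(\Xcal)\otimes\Lcal\bigr) \;=\; \sum_{\beta\in\Bcal}\indice^K_{\Ucal_\beta}\bigl(\Thom^{\Hcal_1}(\Xcal)\vert_{\Ucal_\beta}\otimes\Lcal\vert_{\Ucal_\beta}\bigr)
$$
in $\Rforc(K)$. For the component at $\beta=0$, near $\Phi_1^{-1}(0)$ the deformation vector field $\Hcal_1$ is the classical Kirwan vector field for a compact Hamiltonian manifold (after restricting to a suitable neighborhood), and the local contribution is precisely the index computing the compact quantization commutes with reduction formula of Theorem \ref{theo:Q-R}. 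Thus the $K$-invariant part of the $\beta=0$ piece equals $\Qcal(\Xcal_0)$.

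The main obstacle is to show that for each $\beta\neq 0$, the local contribution has vanishing $K$-invariant part. The standard argument (as in \cite{pep-RR}) proceeds by using the induction morphism $\indL[K_\beta]$ to express $\indice^K_{\Ucal_\beta}(\cdots)$ in terms of a $K_\beta$-transversally elliptic index on a slice of $\Ucal_\beta$ transverse to $K\cdot C_\beta$. On this slice, the symbol factors through a ``polarized'' Thom symbol along the normal bundle of $\Xcal^{V\beta}$, whose $T$-weights all pair non-negatively (in fact strictly positively) with $\beta$. Combined with the shift by the prequantum action (which contributes $\langle\beta,\beta\rangle>0$ via the Kostant formula $L(\beta)-\nabla_{V\beta}=i\langle\Phi_1,\beta\rangle$ applied at points of $C_\beta$), one concludes that every $K$-weight occurring in the local contribution pairs strictly positively with $\beta$. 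Such a virtual representation has no invariant vector, completing the argument. Assembling the pieces gives the equality $[\indice^K(\Thom^{\Hcal_1}(\Xcal)\otimes\Lcal)]^K = \Qcal(\Xcal_0)$.
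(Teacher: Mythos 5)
Your proposal is correct and follows essentially the same route as the proof this paper relies on: Theorem \ref{theo:loc-index} is not proved here but quoted from \cite{pep-RR,pep-formal}, where the argument is exactly the localization of $\indice^K(\Thom^{\Hcal_1}(\Xcal)\otimes\Lcal)$ on the compact critical set of $\|\Phi_1\|^2$, the identification of the $\beta=0$ contribution with $\Qcal(\Xcal_0)$, and the vanishing of the invariant part of the $\beta\neq 0$ contributions via the strict positivity $\langle\Phi_1,\beta\rangle=\|\beta\|^2>0$ coming from the Kostant relation on the prequantum line bundle together with the non-negative pairing of the polarized normal weights with $\beta$. Nothing essential is missing from your sketch.
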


If we apply Theorem \ref{theo:loc-index} to $\Xcal=\Ocal_\Lambda\times\overline{K\cdot\mu}$ we have 
$\Xcal_0\simeq (\Ocal_\Lambda)_\mu$, and then we can conclude that
$$
\mm_\Lambda(\mu)=\Qcal\left((\Ocal_\Lambda)_\mu\right).
$$
The proof of Proposition \ref{prop:formal-lambda} is then completed.

%%%%%%%%%%%%%%%%%%%%%%%%%%%%%%%%%%%%%%%%%%%%%%%%%%%%%%%%%%%%%
%%%%%%%%%%%%%%%%%%%%%%%%%%%%%%%%%%%%%%%%%%%%%%%%%%%%%%%%%%%%%
\section{Properness and admissibility}\label{sec:proper}
%%%%%%%%%%%%%%%%%%%%%%%%%%%%%%%%%%%%%%%%%%%%%%%%%%%%%%%%%%%%%
%%%%%%%%%%%%%%%%%%%%%%%%%%%%%%%%%%%%%%%%%%%%%%%%%%%%%%%%%%%%%

In this section, we consider an element $\Lambda\in\Chol$, and a closed connected subgroup $H$ of 
$K$. We consider the representation $V_\Lambda^K\otimes S(\pgot^+)$ of $K$: note that 
it is an admissible $K$-representation since the circle group $Z(K)$ acts on $\pgot^+$ 
by multiplication. We are interested in the condition 
$$ 
{\bf C1}\qquad \mathrm{The\ representation\ } V_\Lambda^K\otimes S(\pgot^+)\vert_H\ 
\mathrm {is\ an\ admissible}\   
H-\mathrm{representation}. 
$$

Let $\Phi_H:\Ocal_\Lambda\to\hgot^*$ be the moment map relative to the Hamiltonian action of $H$ 
on the coadjoint orbit $\Ocal_\Lambda:=G\cdot\Lambda$:  the map $\Phi_H$ is simply the composition of the 
moment map $\Phi_K:\Ocal_\Lambda\to\kgot^*$ with the canonical projection $\pi:\kgot^*\to\hgot^*$.  

Let us consider the condition
$$
{\bf C2}\qquad \mathrm{The\ map\ } \Phi_H:\Ocal_\Lambda\to\hgot^*\ \mathrm {is\ proper}. 
$$
The aim of this section is to prove that ${\bf C1}\Longleftrightarrow {\bf C2}$. During the proof,  
we will obtain other equivalent conditions. 

We start with the 

\begin{lem}\label{lem:C2impliqueC1}
We have ${\bf C2}\Longrightarrow{\bf C1}$.
\end{lem}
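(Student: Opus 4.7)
The plan is to deduce $\mathbf{C1}$ from $\mathbf{C2}$ as a direct consequence of the functorial property \textbf{[P2]} of formal geometric quantization, combined with the explicit computation of $\qfor_K(\Ocal_\Lambda)$ carried out in the previous section. The three ingredients already at our disposal are: (i) the moment map $\Phi_K:\Ocal_\Lambda\to\kgot^*$ is proper (this is Lemma \ref{lem:critical} together with the Duflo--Heckman--Vergne properness result invoked in Section \ref{sec:sigma-L}), so $\Ocal_\Lambda$ is a pre-quantized proper Hamiltonian $K$-manifold; (ii) Theorem \ref{theo:formal-lambda} identifies
$$
\qfor_K(\Ocal_\Lambda) = V_\Lambda^K \otimes S(\pgot^+) \quad \text{in } \Rfor(K);
$$
(iii) property \textbf{[P2]} asserts that if $\Ocal_\Lambda$ is still proper as a Hamiltonian $H$-manifold via $\varphi:H\hookrightarrow K$, then $\qfor_K(\Ocal_\Lambda)$ is $H$-admissible in the sense of Definition \ref{def:admis}, and
$$
\qfor_K(\Ocal_\Lambda)\vert_H = \qfor_H(\Ocal_\Lambda) \quad \text{in } \Rfor(H).
$$

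Assuming $\mathbf{C2}$, the map $\Phi_H$ is proper, so $\Ocal_\Lambda$ is indeed a pre-quantized proper Hamiltonian $H$-manifold. Hence \textbf{[P2]} applies and yields the $H$-admissibility of $\qfor_K(\Ocal_\Lambda)$. In view of the identification (ii), this is exactly the $H$-admissibility of $V_\Lambda^K \otimes S(\pgot^+)$, which is condition $\mathbf{C1}$. The only point worth checking explicitly is that $H$-admissibility in the sense of Definition \ref{def:admis} coincides, for a $K$-admissible representation with non-negative multiplicities $a_\lambda$, with finiteness of the $H$-multiplicities $b_\mu = \sum_\lambda a_\lambda N^\lambda_\mu$; since all coefficients are non-negative integers, convergence of the sum for each $\mu$ is equivalent to the defining condition that $\{\lambda \mid a_\lambda N^\lambda_\mu \neq 0\}$ be finite.

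There is no real obstacle in this direction: the heavy lifting has already been done, both in Theorem \ref{theo:formal-lambda} (computing $\qfor_K(\Ocal_\Lambda)$) and in Theorem \ref{theo:formal-pep} (establishing the functorial behavior of formal quantization under restriction to a subgroup). The lemma is essentially a packaging of these two statements. The converse implication $\mathbf{C1}\Longrightarrow\mathbf{C2}$, which together with this lemma completes the equivalence announced at the start of Section \ref{sec:proper}, will require an actual argument using the Kirwan polyhedron $\Delta_K(\pgot)$ and the condition $\Delta_K(\pgot)\cap K\cdot\hgot^\perp=\{0\}$; but that is the task of the remainder of the section, not of this lemma.
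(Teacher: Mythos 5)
Your proof is correct and follows essentially the same route as the paper: the paper's own argument opens by citing precisely the identification $\qfor_K(\Ocal_\Lambda)=V_\Lambda^K\otimes S(\pgot^+)$ from Theorem \ref{theo:formal-lambda} together with property \textbf{[P2]} of Theorem \ref{theo:formal-pep}. The only difference is that the paper then also spells out a short self-contained verification (restricting the sum $\sum_\mu N^\mu_\nu\,\Qcal(M_\mu)$ to the finite set $\what{K}\cap\Phi_K\bigl(K\cdot\Phi_H^{-1}(\nu)\bigr)$ via quantization-commutes-with-reduction), which you invoke implicitly through \textbf{[P2]}.
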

\begin{proof}
We have proved in Section \ref{sec:calcul-Q-Lambda} that $V_\Lambda^K\otimes S(\pgot^+)$ is equal to 
$\qfor_K(\Ocal_\Lambda)$. Then, Property {\bf P2} of Theorem \ref{theo:formal-pep} tells us that 
the properness of $\Phi_H$ implies the $H$-admissibility of $V_\Lambda^K\otimes S(\pgot^+)\vert_H$. 
Since this fact is easy to prove, let's recall it.

For $\mu\in \what{K}$ and $\nu\in \what{H}$ we denote
$N^\mu_{\nu}=\dim (\hom_H(V_\nu^H,V_\mu^K|_H))$ the multiplicity of
$V_\nu^H$ in the restriction $V_\mu^K|_H$. Since 
$V_\Lambda^K\otimes S(\pgot^+)=\sum_{\mu\in \what{K}}\Qcal(M_{\mu})\, V_{\mu}^{K}$ 
we know that the multiplicity (possibly infinite) of $V_\nu^H$ in $V_\Lambda^K\otimes S(\pgot^+)$ is 
\begin{equation}\label{eq:somme-mu}
\sum_{\mu\in \what{K}}N^\mu_{\nu}\Qcal\left(M_{\mu}\right).
\end{equation}
Let us see that the former sum is always {\em finite} when {\bf C2} holds. 
Since $V_\mu^K$ is equal to the $K$-quantization of the coadjoint orbit $K\!\cdot\!\mu$, the 
restriction  $V_\mu^K\vert_H$ is equal to the quantization of $K\!\cdot\!\mu$, viewed as a 
Hamiltonian $H$-manifold: the corresponding 
moment map $K\!\cdot\!\mu\to\hgot^*$ is the restriction of the projection $\pi$ to $K\!\cdot\!\mu$.
The ``quantization commutes with reduction'' theorem tells us that the multiplicity $N^\mu_{\nu}$ 
is equal to the quantization of the symplectic reduction of the Hamiltonian $H$-manifold $
K\!\cdot\!\mu$ at $\nu$. In particular
$N^\mu_{\nu}\neq 0$ implies that $ \nu\in \pi(K\cdot\mu)$. Finally
$$
N^\mu_{\nu} \Qcal\left(M_{\mu}\right)\neq 0 \ \Longrightarrow\
\mu\in K\!\cdot \pi^{-1}(\nu)\quad \mathrm{and} \quad
\Phi^{-1}_K(\mu)\neq\emptyset.
$$
These two conditions imply that
we can restrict the sum of (\ref{eq:somme-mu}) to
$$
    \mu\in \what{K}\cap \Phi_K\left( K\cdot\Phi^{-1}_H(\nu)\right)
$$
which is \emph{finite} since $\Phi_H$ is proper.
\end{proof}

\medskip

The rest of this section is dedicated to the proof of ${\bf C1}\Longrightarrow {\bf C2}$. 
Since $V^K_\mu$ is finite dimensional, one notices that ${\bf C1}$ is equivalent to :
$$ 
{\bf C1'}\qquad \mathrm{The\ representation\ } S(\pgot^+)\vert_H\ \mathrm {is\ an\ admissible}\   
H-\mathrm{representation}. 
$$

%%%%%%%%%%%%%%%%%%%%%%%%%%%%%%%%%%%%%%%%%%%%%%%
\subsection{Formal quantization of $\pgot$}
%%%%%%%%%%%%%%%%%%%%%%%%%%%%%%%%%%%%%%%%%%%%%%%

Let us denoted $\pgot^-$, the real vector space $\pgot$ equipped with the  complex 
structure $-\ad(z_o)$ (see the introduction). Let $\Omega_{\pgot}$ be the (constant) symplectic 
structure on $\pgot$ defined by 
\begin{equation}\label{eq:omega-p}
\Omega_{\pgot}(X,Y)= B_\ggot( X,\ad(z_o)Y)
\end{equation}
where $B_\ggot$ is the Killing form on $\ggot$. 

Let $\h$ be the Hermitian structure on $\pgot^-$ defined by $\h(X,Y)=B(X,Y)-i\Omega_{\pgot}(X,Y)$. 
Let ${\rm U}:={\rm U}(\pgot^-)$ be the unitary group with Lie algebra $\ugot$.
We use the isomorphism $\epsilon:\ugot\to\ugot^*$ defined by
$\langle\epsilon(A),B\rangle=-\Tr_\C(AB)\in \R$. For $X,Y\in \pgot$, let
$X\otimes Y^*: \pgot^-\to \pgot^-$ be the $\C$-linear map $Z\mapsto \h(Z,X)Y$.

The action of ${\rm U}$ on $(\pgot,\Omega_{\pgot})$ is Hamiltonian with moment map 
$\Phi_{{\rm U}}: \pgot\to\ugot^*$ defined by
$$
\langle\Phi_{{\rm U}}(X),A\rangle=\Omega_{\pgot}(A(X),X),\quad X\in\pgot,\ A\in\ugot.
$$ 
Via $\epsilon$, the moment map $\Phi_\U$ is defined by
\begin{equation}\label{eq:Phi-p-U}
    \Phi_{{\rm U}}(X)= \frac{1}{i}X\otimes X^*,\quad X\in\pgot.
\end{equation}

The Hamiltonian space $(\pgot,\Omega_{\pgot},\Phi_{{\rm U}})$ is prequantized 
by the trivial line bundle, equipped with the Hermitian structure $\langle s,s'\rangle\vert_X=
e^{\frac{-\|X\|^2}{2}}s\overline{s'}$ and the Hermitian connexion
$\nabla=d- i\theta$ where $\theta$ is the $1$-form on $\pgot$ defined by
$\theta= \Omega_\pgot(X,dX)$.

One sees that $\Phi_{{\rm U}}$ is a proper map. Hence we can consider the formal quantization
$\qfor_\U(\pgot,\Omega_\pgot)\in\Rfor(\U)$ of the $\U$-action on the symplectic manifold 
$(\pgot,\Omega_\pgot)$. We are also interested in $\qfor_\U(\pgot,k\Omega_\pgot)\in\Rfor(\U)$, for 
any integer $k\geq 1$.

\medskip

\begin{lem}[\cite{pep-formal}]\label{lem:Q-formal-p-U}
 The symmetric space $S(\pgot^+)$ is an admissible $U$-representation.  The following equality 
\begin{equation}\label{eq:Q-formal-p-U}
    \qfor_\U(\pgot,k\Omega_\pgot)= S(\pgot^+)
\end{equation}
holds in $\Rfor(U)$, for any $k\geq 1$.
\end{lem}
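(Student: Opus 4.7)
The plan is to split the proof into two parts: first I would establish admissibility of $S(\pgot^+)$, which is elementary, and then identify $\qfor_\U(\pgot, k\Omega_\pgot)$ with $S(\pgot^+)$ by a direct computation of the symplectic reductions of $\pgot$ at every dominant weight of $\U$.

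For admissibility, the decomposition $S(\pgot^+) = \bigoplus_{n \geq 0} S^n(\pgot^+)$ has as graded components the finite-dimensional irreducible $\U$-representations of highest weight $n\varpi_1$ (the standard fact that each symmetric power of the defining representation of $\U$ is irreducible), and the summands are pairwise non-isomorphic since their highest weights are distinct. Hence every $V_\nu^\U$ occurs in $S(\pgot^+)$ with multiplicity at most $1$, which proves admissibility.

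For the formal quantization, I would work directly from Definition \ref{def:formal-quant}:
$$
\qfor_\U(\pgot, k\Omega_\pgot) = \sum_{\nu \in \widehat{\U}} \mathcal{Q}\big((\pgot)_{\nu,k}\big)\, V^\U_\nu,
$$
where $(\pgot)_{\nu,k}$ denotes the symplectic reduction of $(\pgot, k\Omega_\pgot)$ at $\nu$. By formula (\ref{eq:Phi-p-U}), $k\Phi_\U(X) = \frac{k}{i}\, X\otimes X^*$ is a positive-semidefinite Hermitian operator of rank at most one, so its image intersected with the positive Weyl chamber (via the identification $\epsilon$) is the single ray $\Delta_\U(\pgot) = \R^{\geq 0}\varpi_1$. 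Consequently only the dominant weights of the form $\nu = m\varpi_1$, $m \in \N$, can contribute. For each such $\nu$, the fiber $(k\Phi_\U)^{-1}(\nu)$ consists of vectors proportional to a fixed eigenvector of $\nu$ with norm $\sqrt{m/k}$, a single $\U$-orbit whose quotient by the stabilizer $\U_\nu$ collapses to one point. Since the Meinrenken--Sjamaar quantization of a point equals $1$, the multiplicity of $V_{m\varpi_1}^\U$ is $1$ for every $m \geq 0$ and all other multiplicities vanish.

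Putting this together, $\qfor_\U(\pgot, k\Omega_\pgot) = \bigoplus_{m \geq 0} V^\U_{m\varpi_1}$. Identifying $\pgot^+$ with the irreducible $\U$-representation of highest weight $\varpi_1$ via the Hermitian duality $\pgot^+ \simeq (\pgot^-)^*$ induced by $\h$, one has $V^\U_{m\varpi_1} \simeq S^m(\pgot^+)$, and therefore $\qfor_\U(\pgot, k\Omega_\pgot) = S(\pgot^+)$ for every $k \geq 1$. The main delicate step is that the values $m\varpi_1$ with $m > 0$ are not regular values of $\Phi_\U$ (the stabilizer jumps up), so one must invoke the shift-desingularization procedure of Meinrenken--Sjamaar to conclude that the singular reduction, though topologically a point, does quantize to $1$; once this is granted, the independence on $k$ is automatic since rescaling $\Omega_\pgot$ does not alter the topology of the reductions.
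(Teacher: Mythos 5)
Your proof is essentially correct, but it takes a genuinely different route from the paper's. The paper disposes of the lemma by citation: the case $k=1$ is taken from \cite{pep-formal}, where the identity is obtained by identifying $\qfor_\U(\pgot,\Omega_\pgot)$ with the index of a deformed (transversally elliptic) Thom symbol on $\pgot^-$ and computing that index directly --- the same mechanism used in Sections \ref{sec:Q-direct} and \ref{sec:deformation} --- and the general $k$ is then reduced to $k=1$ by the equivariant symplectomorphism $X\mapsto\sqrt{k}\,X$ between $(\pgot,k\Omega_\pgot)$ and $(\pgot,\Omega_\pgot)$. You instead compute every reduced space from Definition \ref{def:formal-quant}: the moment image consists of rank-one semidefinite operators, so the Kirwan set is a single ray, each nonempty fiber $\Phi_\U^{-1}(\U\cdot\nu)$ is a single sphere-orbit whose quotient is a point, and the answer matches the multiplicity-free decomposition of $S(\pgot^+)$. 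This is more elementary and self-contained, and it treats all $k$ uniformly without the rescaling trick. What it costs you is that the one genuinely delicate step --- that the Meinrenken--Sjamaar quantization of these \emph{singular} point-reductions equals $1$ rather than $0$ --- is asserted rather than verified: for $\dim_\C\pgot^-\geq 2$ no nonzero point of the ray is a regular value of $\Phi_\U$, and the value $1$ requires checking that the isotropy group $\U_{X_0}\simeq\U(d-1)$ of a point $X_0\in\Phi_\U^{-1}(\nu)$ acts trivially on the fiber $\Lcal_{X_0}\otimes\C_{-\nu}$ (it does, since $\Lcal$ is the trivial bundle with trivial fiber action and the character $\C_{-\nu}$ of $\U_\nu$ restricts trivially to the factor fixing $X_0$); you should make this verification explicit. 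Finally, adjust the labelling of weights: the central element $i\,{\rm Id}\in\ugot$ pairs negatively with $\Phi_\U(X)$, consistent with the fact that it acts on $\pgot^+$ with weight $-1$, so the ray is generated by the highest weight of $\pgot^+\simeq(\pgot^-)^*$ rather than by the fundamental weight of the defining representation $\pgot^-$; with the relabelling you implicitly adopt at the end, the identification $V^\U_{m\varpi_1}\simeq S^m(\pgot^+)$ is consistent.
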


\begin{proof}
In \cite{pep-formal}, we consider the case $k=1$. The other cases follow since 
the symplectic vector space $(\pgot,k\Omega_\pgot)$ is equivariantly symplectomorphic to 
$(\pgot,\Omega_\pgot)$.
\end{proof}

%%%%%%%%%%%%%%%%%%%%%%%%%%%%%%%%%%%%%%%%%%%%%%%%%%%%%%%%%%%%%%%%%%%%%%%%%%%%%%%%%%
\subsection{Formal quantization of $\pgot$ relative to the $K$-action}
%%%%%%%%%%%%%%%%%%%%%%%%%%%%%%%%%%%%%%%%%%%%%%%%%%%%%%%%%%%%%%%%%%%%%%%%%%%%%%%%

The adjoint action of $K$ on $\pgot$ defines a morphism $\varphi: K\to \U$. Let us denoted 
by $\varphi:\kgot\to\ugot$ the corresponding morphism of Lie algebra, and by 
$\varphi^*:\ugot^*\to\kgot^*$ the dual linear map. The moment map $\Phi_K:\pgot\to\kgot^*$ 
is equal to the composition of 
$\Phi_\U:\pgot\to\ugot^*$ with $\varphi^*$. Via the identification $\kgot^*\simeq\kgot$ 
given\footnote{The map $\xi\in\kgot^*\mapsto\tilde{\xi}\in\kgot$ is defined by 
the relation $\langle\xi,X\rangle:=-B_\ggot(\tilde{\xi},X)$, $\forall X\in\kgot$.}
 by the Killing form $B_\ggot$, the moment map $\Phi_K$ is defined by
\begin{equation}\label{eq:Phi-p-K}
    \Phi_K(X)=- [X,[z_o,X]]\ \in\ \kgot,\quad X\in\pgot.
\end{equation}
We note that $\langle\Phi_K(X),z_o\rangle= \parallel [z_o,X] \parallel^2>0$ if $X\neq 0$. Hence the moment map
$\Phi_K:\pgot\to\kgot^*$ is {\em proper}.  We use property\textbf{[P2]}
of Theorem \ref{theo:formal-pep} (see also Appendix C) to get from Lemma \ref{lem:Q-formal-p-U} the 

\begin{coro}\label{coro:Q-formal-p-K}
 The symmetric space $S(\pgot^+)$ is an admissible $K$-representation. The following equality 
\begin{equation}\label{eq:Q-formal-p-K}
    \qfor_K(\pgot,k\Omega_\pgot)= S(\pgot^+)
\end{equation}
holds in $\Rfor(K)$, for any $k\geq 1$.
\end{coro}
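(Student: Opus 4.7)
The plan is to deduce this corollary from Lemma \ref{lem:Q-formal-p-U} by transporting the $\U$-statement along the Lie group morphism $\varphi: K \to \U$ induced by the adjoint $K$-action on $\pgot$, using the functoriality property \textbf{[P2]} of Theorem \ref{theo:formal-pep} in its general form for morphisms (Appendix \ref{sec:appendix-C}).

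First I would verify the hypotheses needed to invoke \textbf{[P2]}. The symplectic vector space $(\pgot, k\Omega_\pgot)$ is prequantized by the trivial line bundle described just before Lemma \ref{lem:Q-formal-p-U}; since the $K$-action on $\pgot$ factors through $\U$, this prequantum data is automatically $K$-equivariant, and the $K$-moment map is the composition $\Phi_K = \varphi^* \circ \Phi_\U$. The properness of $\Phi_\U$ is clear from $\Phi_\U(X) = \tfrac{1}{i} X \otimes X^*$. For the $K$-moment map, the key observation, already noted in the excerpt, is that
$$\langle \Phi_K(X), z_o \rangle = \|[z_o, X]\|^2,$$
which forces $\Phi_K^{-1}(\text{compact})$ to be bounded in $\pgot$; hence $\Phi_K$ is \emph{proper} and $(\pgot, k\Omega_\pgot)$ is a proper Hamiltonian $K$-manifold.

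With both properness statements established, property \textbf{[P2]} applied to $\varphi: K \to \U$ tells us at once that the $\U$-representation $\qfor_\U(\pgot, k\Omega_\pgot)$ is $K$-admissible and that
$$\qfor_\U(\pgot, k\Omega_\pgot)\vert_K = \qfor_K(\pgot, k\Omega_\pgot) \quad \text{in } \Rfor(K).$$
Combining this with Lemma \ref{lem:Q-formal-p-U}, which identifies $\qfor_\U(\pgot, k\Omega_\pgot)$ with $S(\pgot^+)$, yields simultaneously the $K$-admissibility of $S(\pgot^+)$ and the desired equality $\qfor_K(\pgot, k\Omega_\pgot) = S(\pgot^+)$ in $\Rfor(K)$.

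The only subtlety, and the reason this is not a direct quotation of \textbf{[P2]}, is that $\varphi: K \to \U$ is a genuine morphism rather than a subgroup inclusion (its kernel contains $Z(G) \cap K$), so the original version of \textbf{[P2]} in \cite{pep-formal} does not apply verbatim; one must use the morphism version from Appendix \ref{sec:appendix-C}. This is the one step where anything nontrivial happens, and in the present setting it is handled entirely by appealing to that appendix.
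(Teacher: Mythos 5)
Your proposal is correct and follows the paper's own argument essentially verbatim: the paper likewise notes $\langle\Phi_K(X),z_o\rangle=\|[z_o,X]\|^2$ to get properness of $\Phi_K$, then applies property \textbf{[P2]} in its morphism form (Appendix C) to transport Lemma \ref{lem:Q-formal-p-U} from $\U$ to $K$. Nothing further is needed.
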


\medskip

We look now at the Hamiltonian action of a closed connected subgroup $H\subset K$ on $(\pgot,\Omega_\pgot)$. 
The moment map $\Phi_H:\pgot\to\hgot^*$ is  the composition of the 
map $\Phi_K:\pgot\to\kgot^*$ with the canonical projection $\pi:\kgot^*\to\hgot^*$.  In 
this setting, we know from property \textbf{[P2]} that the properness of $\Phi_H$ implies 
that $S(\pgot^+)\vert_H$ is an admissible representation 
of $H$. In \cite{pep-formal}[Section 5], we have proved the converse. Let $\Delta_K(\pgot)$ 
be the Kirwan polyhedral convex set associated to the Hamiltonian action of $K$ on 
$(\pgot,\Omega_\pgot)$.  Let $\hgot^\perp:=\ker(\pi)\subset \kgot^*$. We have the

\begin{lem}[\cite{pep-formal}]\label{lem:closed-orbit-p}
    The following conditions are equivalent :
    \begin{enumerate}
      \item $\Delta_K(\pgot)\cap K\cdot\hgot^\perp=\{0\}$,
      \item the map $\Phi_H:\pgot\to\hgot^*$ is proper,
      \item  The subalgebra $S(\pgot^+)^H$ formed by the $H$-invariant elements is reduced to the constants.
      \item ${\bf C1'} \ :\ \  S(\pgot^+)\vert_H$ is an admissible representation of $H$.
      
    \end{enumerate}
\end{lem}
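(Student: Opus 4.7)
The plan is to establish the equivalences via the cycle $(2)\Rightarrow(4)\Rightarrow(3)\Rightarrow(1)\Rightarrow(2)$.

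For $(2)\Rightarrow(4)$, I would invoke Corollary~\ref{coro:Q-formal-p-K} together with property \textbf{[P2]} of Theorem~\ref{theo:formal-pep}. The corollary identifies $S(\pgot^+)$ with the formal quantization $\qfor_K(\pgot,\Omega_\pgot)$, and \textbf{[P2]}, applied to the inclusion $H\hookrightarrow K$, asserts that when $\Phi_H=\pi\circ\Phi_K$ is still proper on $\pgot$, the restriction $\qfor_K(\pgot)|_H$ equals $\qfor_H(\pgot)$ and is therefore $H$-admissible.

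For $(4)\Rightarrow(3)$, I would argue algebraically using the grading $S(\pgot^+)=\bigoplus_{n\ge 0}S^n(\pgot^+)$. The subspace $S(\pgot^+)^H$ is a graded subalgebra, and by $H$-admissibility its dimension (the multiplicity of the trivial $H$-type) is finite. Were there any nonzero homogeneous invariant $p$ of positive degree, the powers $p,p^2,p^3,\ldots$ would lie in distinct graded components and thus be linearly independent, contradicting finite-dimensionality. Hence $S(\pgot^+)^H=\C$.

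For $(1)\Rightarrow(2)$, I would use that $\Phi_K:\pgot\to\kgot^*$ is itself proper (which is essentially the lower bound $\langle\Phi_K(X),z_o\rangle=\|[z_o,X]\|^2$, growing like $\|X\|^2$) and homogeneous of degree $2$, so its image is a closed $K$-invariant cone. If $\Phi_H$ were \emph{not} proper, pick $X_n\in\pgot$ with $\|X_n\|\to\infty$ yet $\Phi_H(X_n)$ bounded. Setting $\xi_n:=\Phi_K(X_n)/\|\Phi_K(X_n)\|$ and passing to a subsequence gives a unit vector $\xi\in\Phi_K(\pgot)$ with $\pi(\xi)=0$, i.e.\ $\xi\in\Phi_K(\pgot)\cap\hgot^\perp$. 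The unique point of $K\cdot\xi\cap\tgot^*_+$ then sits in $\Delta_K(\pgot)\cap K\cdot\hgot^\perp\setminus\{0\}$, contradicting (1).

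The main obstacle is $(3)\Rightarrow(1)$. I would argue contrapositively, using the Mumford--Ness theorem already recalled in the introduction: for the linear $K$-action on $\pgot^+$, the Kirwan polyhedron $\Delta_K(\pgot)$ coincides with the asymptotic $K$-support $\AS_K(S(\pgot^+))$. If $\Delta_K(\pgot)\cap K\cdot\hgot^\perp$ contains a nonzero element $\xi$, one can choose a rational ray inside it and, up to scaling and $K$-translation, find an integral dominant weight $\mu\in\what{K}$ with $\mu\in\hgot^\perp$ such that $V_\mu^K$ occurs in $S(\pgot^+)$. Then $\mu$ restricted to $H$ vanishes, so the highest-weight line of $V_\mu^K$ consists of $H$-invariants producing a nonzero element of $S(\pgot^+)^H$ of positive degree, contradicting (3). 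The delicate point here is guaranteeing that a nonzero $\xi$ of the intersection can be moved, via the $K$-action, to an integral \emph{dominant} weight still annihilating $\hgot$; this uses that $\Delta_K(\pgot)$ is a rational polyhedral cone and that $K\cdot\hgot^\perp$ is a union of closed orbits, so the intersection is itself a nonzero rational $K$-invariant cone and contains integral elements lying in $\hgot^\perp\cap\tgot^*_+$.
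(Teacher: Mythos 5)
Your steps $(2)\Rightarrow(4)$ (via Corollary \ref{coro:Q-formal-p-K} and \textbf{[P2]}, exactly the paper's route), $(4)\Rightarrow(3)$ (powers of a positive-degree invariant), and $(1)\Rightarrow(2)$ (a limiting argument; the paper gets the same implication more directly by observing that $\Phi_H$ is homogeneous quadratic, hence proper iff $\Phi_H^{-1}(0)=\{0\}$, together with $\Phi_K(\Phi_H^{-1}(0))=K\cdot\Delta_K(\pgot)\cap\hgot^\perp$) are all sound. The gap is in $(3)\Rightarrow(1)$, and it is genuine, in two places. First, a nonzero $\xi\in\Delta_K(\pgot)\cap K\cdot\hgot^\perp$ is a dominant element whose $K$-\emph{orbit} meets $\hgot^\perp$; it need not itself lie in $\hgot^\perp$, and $\hgot^\perp\cap\tgot^*_+$ may well be $\{0\}$ while $\Delta_K(\pgot)\cap K\cdot\hgot^\perp$ is not (already $K\cdot\hgot^\perp\cap\tgot^*_+$ can be all of $\tgot^*_+$ when $\hgot^\perp\cap\tgot^*=\{0\}$, e.g.\ for $H$ a maximal torus of $K$). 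So your claim that the intersection ``contains integral elements lying in $\hgot^\perp\cap\tgot^*_+$'' is false in general; $K\cdot\hgot^\perp$ is $K$-invariant but $\Delta_K(\pgot)$ is not, and you cannot move $\xi$ into $\hgot^\perp$ while keeping it dominant. Second, even granting a dominant integral $\mu\in\hgot^\perp$ with $V_\mu^K\subset S(\pgot^+)$, the highest-weight line of $V_\mu^K$ need not consist of $H$-invariants: $\langle\mu,Y\rangle=0$ for $Y\in\hgot$ only kills the $\tgot$-component of the action of $Y$ on $v_\mu$, not the contribution of the negative root vectors of $\hgot_\C$. For $K={\rm SU}(2)$ and $H={\rm SO}(2)$ (a torus not equal to the diagonal one), every dominant weight lies in $\hgot^\perp$, yet the highest-weight vector of the standard representation is not $H$-invariant (and $V_1^K$ has no $H$-invariant vector at all). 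What is true is only the asymptotic statement that $\mu\in K\cdot\hgot^\perp$ forces $(V_{N\mu}^K)^H\neq 0$ for some $N$, which is itself a GIT/Kempf--Ness fact and cannot be obtained by inspecting the highest-weight line.

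The route the paper (via Lemma 5.2 of \cite{pep-formal}) actually takes for the equivalence of $(1)$, $(3)$ and $(4)$ is through $\Phi_H^{-1}(0)$ and Kempf--Ness theory: condition $(1)$ fails iff $\Phi_H^{-1}(0)\neq\{0\}$; a nonzero $X\in\Phi_H^{-1}(0)$ lies on a closed $H_\C$-orbit in $\pgot^+$ whose closure avoids $0$, hence is separated from $0$ by a nonconstant $H$-invariant polynomial, negating $(3)$; and conversely a nonconstant homogeneous invariant is nonzero on some closed orbit distinct from $\{0\}$, which by Kempf--Ness meets $\Phi_H^{-1}(0)\setminus\{0\}$. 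The implication $(3)\Rightarrow(4)$ then comes from Hilbert's theorem: each multiplicity space $\bigl(S(\pgot^+)\otimes (V_\nu^H)^*\bigr)^H$ is a finitely generated module over $S(\pgot^+)^H$, hence finite-dimensional when the invariant ring is $\C$. To repair your argument you would need to replace the highest-weight-line step by this Kempf--Ness argument (or by the asymptotic multiplicity estimate for $(V_{N\mu}^K)^H$ coming from quantization of the reduced space $(K\cdot\mu)_0$).
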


\begin{proof}
Since the map $\Phi_H:\pgot\to\hgot^*$ is quadratic, the map $\Phi_H$ is proper 
if and only if $\Phi_H^{-1}(0)=\{0\}$. Now it is easy to check that 
$\Phi_K\left(\Phi_H^{-1}(0)\right)=K\cdot\Delta_K(\pgot)\cap \hgot^\perp$. 
Hence $(1)\Longleftrightarrow\Phi_H^{-1}(0)=\{0\}\Longleftrightarrow (2)$.

The equivalence of the last three points uses property \textbf{[P2]} and some basic results 
of Geometric Invariant Theory
(see Lemma 5.2 in \cite{pep-formal}).
\end{proof}

%%%%%%%%%%%%%%%%%%%%%%%%%%%%%%%%%%%%%%%%%%%%%%%%%%%%%%%
\subsection{Proof of ${\bf C1}\Longrightarrow {\bf C2}$}
%%%%%%%%%%%%%%%%%%%%%%%%%%%%%%%%%%%%%%%%%%%%%%%%%%%%%%%

Let $\Delta_K(\Ocal_\Lambda)$ be the Kirwan polyhedral convex set 
associated to the Hamiltonian action of $K$ on $(\Ocal_\Lambda,\Omega_\Lambda)$.  

To any non-empty subset $C$ of a real vector space $E$, we associate its asymptotic cone 
$\As(C)\subset E$ formed by all the limits $y=\lim_{k\to\infty} t_k y_k$ where $(t_k)$ is a sequence of 
non-negative reals converging to $0$ and $y_k\in C$. Recall the following basic facts:
\begin{enumerate}
\item $\As(C)$ is a closed cone,
\item $\As(C)=\{0\}$ if $C$ is bounded,
\item $\As(C)= C$ if $C$ is  a closed cone,
\item If a compact Lie group $K$ acts linearly on $E$, then $\As(K\cdot C)=K\cdot\As(C)$.
\end{enumerate}

\begin{prop}\label{prop:Delta-K-Ocal-Lambda}
Let $\Lambda\in\Chol$. We have 
$$
\Delta_K(\Ocal_\Lambda)=\Delta_K(K\cdot\Lambda\times \pgot)
\quad
{\rm and }
\quad
\As\left(\Delta_K(\Ocal_\Lambda)\right)=\Delta_K(\pgot).
$$
\end{prop}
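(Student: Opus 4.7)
The plan is to deduce both equalities from the formal quantization machinery, using Lemma \ref{lem:Delta-K-M} for the first and a short asymptotic-cone argument for the second.

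For the first equality, I verify the hypothesis of Lemma \ref{lem:Delta-K-M} with $M=\Ocal_\Lambda$ and $N=K\cdot\Lambda\times\pgot$: I compute $\qfor_K$ of each at every integer scaling $k\geq 1$ of the symplectic form and check they agree. Rescaling the Kirillov--Kostant--Souriau data by $k$ identifies $(\Ocal_\Lambda, k\Omega_\Lambda, \Lcal_\Lambda^{\otimes k})$ with $(\Ocal_{k\Lambda},\Omega_{k\Lambda},\Lcal_{k\Lambda})$ as prequantized Hamiltonian $G$-manifolds, since $K_\Lambda$ and $K_{k\Lambda}$ coincide (both points lie in the same face of the Weyl chamber); Theorem \ref{theo:formal-lambda} applied to $k\Lambda\in\Chol$ then yields
$$
\qfor_K(\Ocal_\Lambda,k\Omega_\Lambda)=V_{k\Lambda}^K\otimes S(\pgot^+).
$$
On the product side, the moment map $(\xi,X)\mapsto\xi+\Phi_K(X)$ is proper (boundedness of $K\cdot\Lambda$ plus properness of $\Phi_K:\pgot\to\kgot^*$), so property \textbf{[P3]} of Theorem \ref{theo:formal-pep} gives
$$
\qfor_K(K\cdot\Lambda\times\pgot, k\Omega)=\Qcal_K(K\cdot\Lambda,k\Omega)\cdot\qfor_K(\pgot,k\Omega_\pgot);
$$
the first factor equals $V_{k\Lambda}^K$ by Borel--Weil and the second equals $S(\pgot^+)$ by Corollary \ref{coro:Q-formal-p-K}. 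Both formal quantizations agree for every $k\geq 1$, so Lemma \ref{lem:Delta-K-M} yields $\Delta_K(\Ocal_\Lambda)=\Delta_K(K\cdot\Lambda\times\pgot)$. (The case of non-integral $\Lambda\in\Chol$ is reduced to the integral case by noting that both Kirwan sets scale linearly in $\Lambda$ and that the rational rays of $\wedge^*$ are dense in $\Chol$.)

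For the second equality, I use the first to reduce to computing $\As(\Delta_K(K\cdot\Lambda\times\pgot))$. Setting $I:=K\cdot\Lambda+\Phi_K(\pgot)\subset\kgot^*$, this is the $K$-invariant image of the product moment map. Since $\Phi_K(X)=-[X,[z_o,X]]$ is homogeneous of degree two in $X$, the set $\Phi_K(\pgot)$ is already a closed $K$-invariant cone; combined with the boundedness of $K\cdot\Lambda$, the standard properties of asymptotic cones yield $\As(I)=\Phi_K(\pgot)$. A brief lemma shows that $\As(C\cap\tgot^*_+)=\As(C)\cap\tgot^*_+$ for any $K$-invariant $C\subset\kgot^*$: given $y\in\As(C)\cap\tgot^*_+$, write $y=\lim t_n y_n$ with $y_n\in C$, replace $y_n$ by its Weyl-chamber representative $z_n=k_n y_n\in C\cap\tgot^*_+$, extract $k_n\to k$, and use that $y$ and $k^{-1}y$ both lie in $\tgot^*_+\cap K\cdot y$ to conclude $k^{-1}y=y$, hence $y\in\As(C\cap\tgot^*_+)$. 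Applying this to $I$ gives $\As(\Delta_K(\Ocal_\Lambda))=\As(I)\cap\tgot^*_+=\Phi_K(\pgot)\cap\tgot^*_+=\Delta_K(\pgot)$.

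The main technical obstacle is the bookkeeping inside the first step: the explicit prequantum identification between $(\Ocal_\Lambda,k\Omega_\Lambda)$ and $(\Ocal_{k\Lambda},\Omega_{k\Lambda})$ for all $k\geq 1$, verifying that property \textbf{[P3]} applies with the correctly scaled symplectic structure on $K\cdot\Lambda\times\pgot$, and reducing the non-integral $\Lambda$ case to the integral one. Once these verifications are in place, the rest of the argument is a direct assembly of the previously established results (Theorem \ref{theo:formal-lambda}, Corollary \ref{coro:Q-formal-p-K}, Borel--Weil, and the elementary asymptotic-cone lemma).
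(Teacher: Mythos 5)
Your proof is correct and follows essentially the same route as the paper: rescale to identify $(\Ocal_\Lambda,k\Omega_\Lambda)$ with $\Ocal_{k\Lambda}$, compute both formal quantizations as $V_{k\Lambda}^K\otimes S(\pgot^+)$ via Theorem \ref{theo:formal-lambda}, property \textbf{[P3]} and Corollary \ref{coro:Q-formal-p-K}, and invoke Lemma \ref{lem:Delta-K-M}; the asymptotic-cone step likewise matches the paper's, merely repackaged through the identity $\As(C\cap\tgot^*_+)=\As(C)\cap\tgot^*_+$ for $K$-invariant $C$ instead of the paper's direct decomposition $y_k=y_k'+y_k''$ with $y_k'$ bounded. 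Your parenthetical on non-integral $\Lambda$ flags a point the paper silently glosses over, though the density argument you sketch would need a continuity statement for $\Lambda\mapsto\Delta_K(\Ocal_\Lambda)$ to be complete.
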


\begin{proof} For any integer $k\geq 1$, the coadjoint orbit $\Ocal_\Lambda$, equipped with the 
symplectic form $k\Omega_{\Lambda}$, is symplectomorphic to $(\Ocal_{k\Lambda},\Omega_{k\Lambda})$. 
Theorem \ref{theo:formal-lambda} shows then that 
$$
\qfor_K(\Ocal_\Lambda, k\Omega_\Lambda)=V_{k\Lambda}^K\otimes S(\pgot^+).
$$

Consider now the product $\Ocal_\Lambda':= K\cdot\Lambda\times\pgot$ equipped with the symplectic 
structure $\Omega_\Lambda':=\Omega_{K\cdot\Lambda}\times\Omega_\pgot$: here 
$\Omega_{K\cdot\Lambda}$ is 
the Kirillov-Kostant-Souriau 
symplectic form and $\Omega_\pgot$ is defined in (\ref{eq:omega-p}). For any integer $k\geq 1$, 
the symplectic manifold is $(\Ocal_\Lambda', k\Omega_\Lambda')$ is pre-quantized by 
$(\Lcal'_\Lambda)^{\otimes k}$, where $\Lcal'_\Lambda$  is the pull-back of the line bundle 
$K\times_{K_\Lambda}\C_\Lambda\to K\cdot\Lambda$ to $\Ocal'_\Lambda$. 

Since $(\pgot,\Omega_\pgot)$ has a {\em proper} $K$-moment map, we can use property \textbf{[P3]}
of Theorem \ref{theo:formal-pep}. We have 
\begin{eqnarray*}
\qfor_K(\Ocal_\Lambda', k\Omega_\Lambda')&=& \Qcal_K(K\cdot\Lambda,k \Omega_{K\cdot\mu})\otimes
\qfor_K(\pgot, k\Omega_\pgot)\\
&=&
V_{k\Lambda}^K\otimes S(\pgot^+).
\end{eqnarray*}

We are now in the setting of Lemma \ref{lem:Delta-K-M} : $(\Ocal_\Lambda, \Omega_\Lambda)$ and 
$(\Ocal_\Lambda', \Omega_\Lambda')$ are two prequantized {\em proper} Hamiltonian $K$-manifold such 
that $\qfor_K(\Ocal_\Lambda, k\Omega_\Lambda)=$ \break 
$\qfor_K(\Ocal_\Lambda', k\Omega_\Lambda')$ for any integer 
$k\geq 1$. This implies that
$\Delta_K(\Ocal_\Lambda)=\Delta_K(\Ocal_\Lambda')$. Hence the first point is proved.

For the other point, we first observe that $\Lambda+\Delta_K(\pgot)\subset \Delta_K(\Ocal_\Lambda')$, so 
$$
\Delta_K(\pgot)=\As\left(\Lambda+\Delta_K(\pgot)\right)\subset \As\left(\Delta_K(\Ocal_\Lambda')\right).
$$

Let $y\in \As\left(\Delta_K(\Ocal_\Lambda')\right)$. We have $y=\lim_{k\to\infty} t_k y_k$ with 
$y_k=y_k'+y_k''$, where $y_k'\in K\cdot\Lambda$, $y_k''\in\Phi_K(\pgot)$, $y_k'+y_k''\in\tgot^*_+$ 
and $t_k$ is a sequence of positive number converging to $0$. Since $y_k'$ is bounded, we have
$$
y=\lim_{k\to\infty} t_k y_k''\in \Phi_K(\pgot)\cap\tgot^*_+.
$$
So we have proved that $y\in \Delta_K(\pgot)$. With the first point we can conclude that
$$
\Delta_K(\pgot)=\As\left(\Delta_K(\Ocal_\Lambda')\right)=\As\left(\Delta_K(\Ocal_\Lambda)\right).
$$ 

\end{proof}

\begin{rem}When $\Lambda\in \Chol$ is $K$-invariant, the K\"ahler manifold $\Ocal_\Lambda$ is 
exactly the Hermitian symmetric space $G/K$. In this situation, McDuff \cite{McDuff} has shown that $G/K$ is 
{\em symplectomorphic} to the symplectic vector space $(\pgot, \Omega_\pgot)$. 

In the light of Proposition \ref{prop:Delta-K-Ocal-Lambda}, we conjecture that 
for any $\Lambda\in \Chol$, the coadjoint orbit $\Ocal_\Lambda$ is $K$-equivariantly 
{\em symplectomorphic} to the product 
$K\cdot\Lambda\times\pgot$ equipped with the symplectic 
structure $\Omega_{K\cdot\Lambda}\times\Omega_\pgot$.
\end{rem}

\medskip

We need the following basic 
\begin{lem}\label{lem:propre}
Let $(M,\Omega)$ be a Hamiltonian $K$-manifold with a {\em proper} moment map $\Phi_K:M\to \kgot^*$. 
Let $H\subset K$ be a closed connected subgroup. Let  $\Phi_H:M\to \kgot^*$ be the moment map 
relative to the action of $H$ on $M$. Suppose that we have 
$$
\As\left(\Delta_K(M)\right)\cap K\cdot \hgot^\perp=\{0\}.
$$

Then there exists $c>0$ such that $\| \Phi_H(m)\|\geq c \|\Phi_K(m)\|$ 
holds outside a compact subset of $M$. In particular $\Phi_H$ is a {\em proper} map.
\end{lem}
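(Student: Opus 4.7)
The plan is to argue by contradiction using a standard asymptotic analysis on coadjoint orbits. First I would fix a $K$-invariant Euclidean norm on $\kgot^{*}$ (coming from the Killing form on $\ggot$) so that the canonical projection $\pi:\kgot^{*}\to\hgot^{*}$ is the orthogonal projection with kernel $\hgot^{\perp}$. With this convention, $\|\pi(\mu)\|/\|\mu\|$ measures the sine of the angle between $\mu$ and $\hgot^{\perp}$, and properness of $\Phi_K$ says that a sequence $(m_n)$ leaves every compact subset of $M$ iff $\|\Phi_K(m_n)\|\to\infty$.

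Suppose the main inequality fails for every $c>0$. Then one extracts a sequence $(m_n)\subset M$ with $\|\Phi_K(m_n)\|\to\infty$ and $\|\Phi_H(m_n)\|/\|\Phi_K(m_n)\|\to 0$. Using that every $K$-orbit in $\kgot^{*}$ meets $\tgot^{*}_+$ in a single point, write $\Phi_K(m_n)=k_n\cdot \overline{\mu}_n$ with $k_n\in K$ and $\overline{\mu}_n\in\tgot^{*}_+\cap \Phi_K(M)\subset \Delta_K(M)$. By $K$-invariance of the norm, $\|\overline{\mu}_n\|=\|\Phi_K(m_n)\|\to \infty$. Extracting subsequences, I would arrange that $k_n\to k_\infty\in K$ and $\overline{\mu}_n/\|\overline{\mu}_n\|\to\overline{\nu}_\infty$ with $\|\overline{\nu}_\infty\|=1$. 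By the very definition of the asymptotic cone, $\overline{\nu}_\infty\in \As(\Delta_K(M))$.

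Now $\Phi_K(m_n)/\|\Phi_K(m_n)\|=k_n\cdot(\overline{\mu}_n/\|\overline{\mu}_n\|)$ converges to $\nu_\infty:=k_\infty\cdot\overline{\nu}_\infty$, and applying the continuous map $\pi$ together with the hypothesis $\|\pi(\Phi_K(m_n))\|/\|\Phi_K(m_n)\|\to 0$ forces $\pi(\nu_\infty)=0$, i.e.\ $\nu_\infty\in \hgot^{\perp}$. Hence $\overline{\nu}_\infty=k_\infty^{-1}\cdot\nu_\infty\in K\cdot\hgot^{\perp}$ is a nonzero element of $\As(\Delta_K(M))\cap K\cdot\hgot^{\perp}$, contradicting the standing assumption. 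The ``in particular $\Phi_H$ is proper'' statement follows immediately: if $\Phi_H(m_n)$ stays bounded while $m_n$ leaves every compact subset, then $\|\Phi_K(m_n)\|\to\infty$ by properness of $\Phi_K$, which combined with the main inequality forces $\|\Phi_H(m_n)\|\to\infty$, a contradiction.

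I do not anticipate a genuine obstacle: the only point that must be handled with some care is the legitimacy of the decomposition $\Phi_K(m_n)=k_n\cdot \overline{\mu}_n$ with $\overline{\mu}_n\in\Delta_K(M)$, which is just the standard fact that each coadjoint $K$-orbit meets the positive Weyl chamber and that $\Delta_K(M)=\Phi_K(M)\cap \tgot^{*}_+$ by definition (\ref{eq:kirwan-polytope}). Once this is set up, everything reduces to compactness of the unit sphere in $\kgot^{*}$ and of $K$, plus continuity of $\pi$.
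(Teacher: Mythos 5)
Your argument is correct and is essentially identical to the paper's proof: both argue by contradiction from a sequence with $\|\Phi_K(m_n)\|\to\infty$ and $\|\Phi_H(m_n)\|/\|\Phi_K(m_n)\|\to 0$, write $\Phi_K(m_n)=k_n\cdot y_n$ with $y_n\in\Delta_K(M)$, pass to convergent subsequences of $k_n$ and $y_n/\|y_n\|$, and produce a nonzero element of $\As(\Delta_K(M))\cap K\cdot\hgot^\perp$. The only cosmetic difference is that you spell out the normalization of $\pi$ and the extraction of the sequence, which the paper leaves implicit.
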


\begin{proof}
Suppose that there exists a sequence $m_i\in M$ such that 
$$
\lim_{i\to\infty}\|\Phi_K(m_i)\|=\infty\quad {\rm and} \quad 
\lim_{i\to\infty}\frac{\|\Phi_H(m_i)\|}{\|\Phi_K(m_i)\|}=0.
$$

We put $\Phi_K(m_i)=k_i\cdot y_i$ with $k_i\in K$ and $y_i\in \Delta_K(M)$. 
We have then
$$
\lim_{i\to\infty}\pi\left(k_i\cdot\frac{y_i}{\|y_i\|}\right)=0
$$
where $\pi:\kgot^*\to\hgot^*$ is the projection. 
Here we can assume that the sequence $k_i$ converge to $k\in K$, and that the sequence 
$\frac{y_i}{\|y_i\|}$ converge to $y\in \As(\Delta_K(M))$, $\|y\|=1$. We get then that
$\pi(k\cdot y)=0$. In other words, $y$ is a non-zero element in 
$\As\left(\Delta_K(M)\right)\cap K\cdot\ker(\pi).$
\end{proof}

We can now finish the proof of ${\bf C1}\Longrightarrow {\bf C2}$. We have already check in 
Lemma \ref{lem:closed-orbit-p} that 
$$
{\bf C1}\Longleftrightarrow {\bf C1'}\Longleftrightarrow \Delta_K(\pgot)\cap K\cdot\hgot^\perp=\{0\}.
$$

We have proved in Proposition \ref{prop:Delta-K-Ocal-Lambda} that 
$\Delta_K(\pgot)= \As\left(\Delta_K(\Ocal_\Lambda)\right)$, so 
condition {\bf C1} is equivalent to 
\begin{equation}\label{eq:Delta-M-ker-pi-bis}
\As\left(\Delta_K(\Ocal_\Lambda)\right)\cap K\cdot\hgot^\perp=\{0\}.
\end{equation}

Finally, we know after Lemma \ref{lem:propre} that  (\ref{eq:Delta-M-ker-pi-bis}) implies the 
properness of the moment map $\Phi_H:\Ocal_\Lambda\to\hgot^*$.

%%%%%%%%%%%%%%%%%%%%%%%%%%%%%%%%%%%%%%%%%%%%%%%%%%%%%%%%%%%%%%%%%
%%%%%%%%%%%%%%%%%%%%%%%%%%%%%%%%%%%%%%%%%%%%%%%%%%%%%%%%%%%%%%%%%
\section{Description of $\Delta_K(\pgot)$}\label{sec:delta-K-p}
%%%%%%%%%%%%%%%%%%%%%%%%%%%%%%%%%%%%%%%%%%%%%%%%%%%%%%%%%%%%%%%%%
%%%%%%%%%%%%%%%%%%%%%%%%%%%%%%%%%%%%%%%%%%%%%%%%%%%%%%%%%%%%%%%%%

The purpose of this section is the description of the Kirwan polyhedral cone 
$\Delta_K(\pgot)$ which is attached to the Hamiltonian action of $K$ on 
$(\pgot,\Omega_\pgot)$.

For any root $\alpha\in\Rgot=\Rgot(\ggot_\C,\tgot_\C)$ the corresponding root space $\ggot_\alpha\subset\ggot_\C$
is defined as $\{X\in \ggot_\C\ \vert\ [H,X]=i\langle \alpha, H\rangle X,\ \forall \ H\in \tgot\}$.

For the rest of this section, we work with the system of positive roots
$\Rgot^+_{\rm hol}=\Rgot^+_c\cup\Rgot^{+,z_o}_n$   defined in the introduction.  
For any positive non-compact root $\beta\in \Rgot_n^{+,z_o}$, there are $H_\beta\in\tgot, E_\beta\in\ggot_\beta,
E_{-\beta}\in\ggot_{-\beta}$ such that
\begin{eqnarray}\label{eq:sl2}
[E_{\beta},E_{-\beta}] &=&      i H_\beta\nonumber\\
\overline{E_{\beta}}     &=&      E_{-\beta}\\
B_\ggot(E_{\beta},E_{-\beta})  &=&  \frac{2}{\|\beta\|^2}\nonumber.
\end{eqnarray}
Here $X\mapsto \overline{X}$ is the conjugation on $\ggot_\C$ relative to the real form $\ggot$, and 
the norm $\|-\|^2$ on $\tgot^*$ is induced by the Killing form $B_\ggot$.

Note that conditions (\ref{eq:sl2}) implies that $[i H_\beta,E_{\beta}]= 2 E_{\beta}$,  
$[i H_\beta,E_{-\beta}]= -2 E_{-\beta}$  and 
\begin{equation}\label{eq:H-gamma}
H_\beta\simeq -2\frac{\beta}{\|\beta\|^2}
\end{equation} 
through the isomorphism $\tgot\simeq\tgot^*$.  In particular $iH_\beta,E_{\beta}$ and $E_{-\beta}$ span 
a subalgebra of $\ggot_\C$ isomorphic to $\slgot(2,\C)$.

For $\beta\in \Rgot^{+,z_o}_n$, let $X_\beta=\frac{1}{2}( E_{\beta}+E_{-\beta})$ and
$Y_\beta =\frac{1}{2i}( E_{\beta}-E_{-\beta}) $. 
Thus the set $\{X_\beta,Y_\beta\}_{\beta\in \Rgot^{+,z_o}_n}$ is a real basis of $\pgot$. 
Since $\langle\beta, z_o\rangle=1$ for any $\beta\in \Rgot^{+,z_o}_n$, we have $\ad(z_o)X_\beta  = -Y_\beta$ 
and $\ad(z_o)Y_\beta  = X_\beta$.

We will now describe the restricted root system of $G/K$. Two roots $\alpha,\beta\in\Rgot$ are {\em 
strongly orthogonal}, written $\alpha\spp\beta$, if neither of $\alpha\pm\beta$ is a root. 
One can easily check that strong orthogonality implies orthogonality with respect to the 
scalar product on $\tgot^*$.  

Consider the ``cascade construction''
\begin{eqnarray}\label{eq:cascade}
\Psi&=&\{\gamma_1,\ldots,\gamma_r\},\  {\rm maximal\ set\ constructed\ by\ }:\nonumber\\
&& \gamma_1\ {\rm is \ the\ maximal \ root\  in\ } \Rgot^{+,z_o}_n\nonumber\\
&& \gamma_{i+1}\ {\rm is \ the\ maximal \ root\  in\ } 
\{\beta\in \Rgot^{+,z_o}_n\ \vert\ \beta\spp\gamma_k\ {\rm for}\ k=1,\ldots,i\}.\nonumber
\end{eqnarray}

For the roots $\gamma_k$, we denote simply $X_k,Y_k, H_k$ the elements $X_{\gamma_k}, Y_{\gamma_k}, 
H_{\gamma_k}$. We have the classical  result (see \cite{Helgason}[Prop. 7.4])

\begin{lem}\label{lem:a}
The subspace
$$
\agot:=\sum_{k=1}^r \R X_k
$$
is maximal abelian in $\pgot$.
\end{lem}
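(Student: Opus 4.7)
The plan divides into two parts: showing that $\agot$ is abelian, and showing that it is maximal among abelian subspaces of $\pgot$.

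For the abelian property, I would compute directly from the definition $X_k=\tfrac12(E_{\gamma_k}+E_{-\gamma_k})$. For $i\neq j$, strong orthogonality $\gamma_i\spp\gamma_j$ means that none of $\pm\gamma_i\pm\gamma_j$ lies in $\Rgot$, so every bracket $[E_{\pm\gamma_i},E_{\pm\gamma_j}]\in\ggot_{\pm\gamma_i\pm\gamma_j}$ vanishes. Expanding
\[
[X_i,X_j]=\tfrac{1}{4}\bigl[E_{\gamma_i}+E_{-\gamma_i},\,E_{\gamma_j}+E_{-\gamma_j}\bigr]
\]
then gives zero term by term.

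For maximality, I would take $W\in\pgot$ with $[W,X_k]=0$ for every $k$, and aim to conclude $W\in\agot$. Working in $\pgot_\C=\pgot^+\oplus\pgot^-$ I would write
\[
W=\sum_{\beta\in\Rgot_n^{+,z_o}}\bigl(c_\beta E_\beta+\overline{c_\beta}\,E_{-\beta}\bigr),\qquad c_\beta\in\C.
\]
The first key observation is that $\gamma_k+\beta$ is never a root for $\beta\in\Rgot_n^{+,z_o}$: its $\ad(z_o)$-eigenvalue would be $2i$, but the decomposition $\ggot_\C=\kgot_\C\oplus\pgot^+\oplus\pgot^-$ only admits eigenvalues $0,\pm i$. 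Hence $[E_{\gamma_k},E_\beta]=0$ and $[E_{-\gamma_k},E_{-\beta}]=0$, so every $[X_k,W]$ in fact lies in $\kgot_\C=\tgot_\C\oplus\bigoplus_{\alpha\in\Rgot_c}\ggot_\alpha$, coming entirely from the ``cross'' brackets $[E_{\gamma_k},E_{-\beta}]$ and $[E_{-\gamma_k},E_\beta]$.

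Projecting $[W,X_k]=0$ onto $\tgot_\C$ picks up only the $\beta=\gamma_k$ contribution and forces $c_{\gamma_k}\in\R$, so the $E_{\pm\gamma_k}$-parts of $W$ are already a real multiple of $X_k$. Projecting onto each compact-root space $\ggot_\alpha$ (necessarily of the form $\alpha=\gamma_k-\beta$ with $\beta\in\Rgot_n^{+,z_o}\setminus\{\gamma_k\}$) yields a linear system among the remaining coefficients $c_\beta$. Here the maximality of the Harish-Chandra cascade enters: by construction no $\beta\in\Rgot_n^{+,z_o}\setminus\{\gamma_1,\ldots,\gamma_r\}$ can be strongly orthogonal to all of $\gamma_1,\ldots,\gamma_r$, so for each such $\beta$ there exists $k$ with $\gamma_k\pm\beta\in\Rgot$. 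Running the constraints over all $k$ couples the remaining $c_\beta$ nondegenerately and forces them to vanish, yielding $W\in\agot$.

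The main obstacle is the last step: checking that the linear system coming from the compact-root projections is genuinely nondegenerate requires some combinatorial bookkeeping on how the non-compact roots $\beta\notin\{\gamma_k\}$ pair up under the cascade. A shorter alternative, which is in fact the route taken by Helgason, is to bypass this direct analysis and invoke the Harish-Chandra--Moore classification of restricted roots for Hermitian simple Lie algebras, which gives that the real rank of $\ggot$ equals the cascade length $r$; the abelian subspace $\agot$ of dimension $r$ must then already be maximal.
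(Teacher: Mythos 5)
The paper offers no argument here at all: it states the lemma as ``the classical result'' and cites Helgason [Prop.~7.4], so there is no in-paper proof to match yours against. Your first half is complete and correct: strong orthogonality kills every bracket $[E_{\pm\gamma_i},E_{\pm\gamma_j}]$ for $i\neq j$, and your observation that $\gamma_k+\beta$ is never a root because its $\ad(z_o)$-eigenvalue would be $2i$ (while $\ggot_\C=\kgot_\C\oplus\pgot^+\oplus\pgot^-$ only allows $0,\pm i$) is exactly the right mechanism; it also correctly places $[X_k,W]$ in $\kgot_\C$ and forces $c_{\gamma_k}\in\R$.

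The maximality half, however, has a genuine gap at precisely the point you flag, and that gap is the entire content of the lemma. Fix $k$ and a compact root $\alpha$ such that \emph{both} $\gamma_k-\alpha$ and $\gamma_k+\alpha$ lie in $\Rgot_n^{+,z_o}$: the $\ggot_{\pm\alpha}$-components of $[X_k,W]=0$ then yield a single complex linear relation between $\overline{c_{\gamma_k-\alpha}}$ and $c_{\gamma_k+\alpha}$ (with nonzero structure-constant coefficients), so no single $k$ kills either coefficient, and you nowhere show that the relations aggregated over all $k$ form a nondegenerate system. The classical proofs close exactly this step by a careful induction along the cascade ordering; asserting nondegeneracy is not a proof of it. Your proposed shortcut does not repair this: the Harish-Chandra--Moore description of the restricted root system (which the paper itself recalls in Appendix B via the Cayley transform) is formulated \emph{relative to} $\agot$ and is normally derived \emph{from} the maximality of $\agot$, so invoking ``real rank $=r$'' is circular unless you supply an independent computation of the real rank; it is also not the route Helgason takes --- he proves maximality directly. (The surrounding logic of the shortcut is otherwise fine: an abelian subspace of $\pgot$ whose dimension equals the real rank is automatically maximal, since all maximal abelian subspaces of $\pgot$ are $K$-conjugate.) The safe fix, and the one the paper adopts, is simply to cite Helgason's Proposition 7.4.
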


Since $\pgot=K\cdot\agot$, it is sufficient to understand the image of 
$\agot$ by $\Phi_K$ to compute $\Delta_K(\pgot)$ :  in fact this Kirwan cone will be computed by 
describing  the image by $\Phi_K$ of a closed cone $\agot_+\subset \agot$, which is a fundamental 
domain for the $K$-action on $\pgot$.

For $\lambda\in\agot^*$, we write 
$$
\ggot^\lambda:=\{X\in \ggot\ \vert\ [H,X]=\langle \lambda, H\rangle X\ {\rm for\ all}\ H\in\agot\}.
$$
If $\ggot^\lambda\neq 0$ and $\lambda\neq 0$, we call $\lambda$ a {\em restricted root} of $\ggot$. The set 
of restrited roots is denoted $\Sigma$. Let $W_\Sigma$ be the group generated 
by the orthogonal symmetries along the hyperplane $\ker(\lambda)$, $\lambda\in \Sigma$. A proof of 
the following classic result can be found in \cite{Knapp-beyond}[Sec. VI.5].
\begin{prop}\label{prop:restricted-root}
$\bullet$ $\Sigma$ is an abstract root system on $\agot^*$.

$\bullet$ The group $W_\Sigma$ is finite and is canonically identify with the quotient $N_K(\agot)/Z_K(\agot)$, 
where $N_K(\agot)$ is the normalizer subgroup of $\agot$ in $K$ and 
$Z_K(\agot)$ is the centralizer subgroup of $\agot$ in $K$.
\end{prop}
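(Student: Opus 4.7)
The plan is to deduce both points of the proposition from the standard restricted root space decomposition and the construction of $\slgot(2,\R)$-triples attached to each restricted root.

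First I would set up the decomposition of $\ggot$ into restricted root spaces. Since $\agot\subset\pgot$, every $\ad(H)$ with $H\in\agot$ is self-adjoint with respect to the positive definite form $-B_\ggot(\cdot,\theta\cdot)$ (where $\theta$ is the Cartan involution with $+1$ eigenspace $\kgot$ and $-1$ eigenspace $\pgot$), so the operators $\{\ad(H)\}_{H\in\agot}$ are simultaneously diagonalizable with real eigenvalues. This yields the decomposition $\ggot=\ggot^0\oplus\bigoplus_{\lambda\in\Sigma}\ggot^\lambda$ with $\ggot^0=Z_\ggot(\agot)$, and the finiteness of $\Sigma$ is immediate. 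Note that $\theta$ swaps $\ggot^\lambda$ and $\ggot^{-\lambda}$, so $-\lambda\in\Sigma$ whenever $\lambda\in\Sigma$, and $\Sigma$ spans $\agot^*$ because $Z_\ggot(\agot)\cap\pgot=\agot$ by maximality of $\agot$ in $\pgot$.

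Next, for each $\lambda\in\Sigma$ I would produce the reflection $s_\lambda$ as an element of $N_K(\agot)/Z_K(\agot)$. Picking $0\neq X\in \ggot^\lambda$ and setting $Y=-\theta X\in\ggot^{-\lambda}$, the bracket $[X,Y]$ lies in $\agot$ and a suitable rescaling yields an $\slgot(2,\R)$-triple $(H_\lambda,X_\lambda,Y_\lambda)$ with $H_\lambda\in\agot$ proportional to the coroot of $\lambda$ under the identification $\agot\simeq\agot^*$ given by $B_\ggot$. The element $k_\lambda:=\exp\!\bigl(\tfrac{\pi}{2}(X_\lambda+\theta X_\lambda)\bigr)\in K$ then acts on $\agot$ exactly as the orthogonal reflection $s_\lambda$ through $\ker\lambda$ (a computation inside the three-dimensional subgroup associated to the triple). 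This shows $k_\lambda\in N_K(\agot)$ and produces all the reflections $s_\lambda$. Combined with the standard $\slgot(2)$-representation argument applied to $\ad(H_\lambda)$ acting on $\ggot^\mu$, we obtain $\tfrac{2(\mu,\lambda)}{(\lambda,\lambda)}\in\Z$ for all $\mu\in\Sigma$, which together with the reflection invariance finishes the proof that $\Sigma$ is an abstract (possibly non-reduced) root system. In particular $W_\Sigma$ is finite.

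For the identification $W_\Sigma\simeq N_K(\agot)/Z_K(\agot)$, I would define the map $\Psi:N_K(\agot)/Z_K(\agot)\to W_\Sigma$ by sending $[k]$ to the linear isomorphism of $\agot^*$ induced by $\Ad(k^{-1})^*$. This is well-defined since $Z_K(\agot)$ acts trivially on $\agot$, and it lands in $W_\Sigma$ because $\Ad(k)$ permutes the restricted root spaces. Surjectivity is exactly the content of the construction of $k_\lambda$ above. For injectivity, one observes that any $k\in N_K(\agot)$ acting trivially on $\agot$ belongs to $Z_K(\agot)$ by definition.

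The main technical obstacle is the explicit construction in the second paragraph: verifying that the exponential $k_\lambda$ of $\tfrac{\pi}{2}(X_\lambda+\theta X_\lambda)$ actually acts on $\agot$ as the reflection $s_\lambda$ (and not merely as an element fixing $\ker\lambda$ pointwise) requires the careful $\slgot(2,\R)$-computation; everything else is either a standard diagonalization argument or a bookkeeping consequence of it.
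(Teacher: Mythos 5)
The paper does not prove this proposition at all: it simply cites Knapp, \emph{Lie groups Beyond an Introduction}, Sec.~VI.5, and your argument is essentially the standard proof given there (restricted root space decomposition via simultaneous diagonalization of the self-adjoint commuting operators $\ad(H)$, $H\in\agot$; the $\slgot(2,\R)$-triples $(H_\lambda,X_\lambda,-\theta X_\lambda)$; the reflections realized by $k_\lambda=\exp\bigl(\tfrac{\pi}{2}(X_\lambda+\theta X_\lambda)\bigr)$; integrality from $\slgot(2)$-representation theory applied to $\bigoplus_n\ggot^{\mu+n\lambda}$). Up to one point discussed below, your outline is correct, and the technical obstacle you single out (checking that ${\rm Ad}(k_\lambda)$ restricted to $\agot$ is the reflection $s_\lambda$ and not merely an isometry fixing $\ker\lambda$) is indeed the computation one must do inside the three-dimensional subgroup.

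There is, however, one genuine gap: the claim that your map $\Psi:N_K(\agot)/Z_K(\agot)\to W_\Sigma$ ``lands in $W_\Sigma$ because ${\rm Ad}(k)$ permutes the restricted root spaces.'' Permuting $\Sigma$ only places the induced isometry in the automorphism group of the root system, which in general strictly contains the Weyl group (already for type $A_2$ the automorphism group has order $12$ while $W$ has order $6$). The inclusion of the image of $N_K(\agot)$ into $W_\Sigma$ is in fact the nontrivial half of the second bullet and is exactly what Knapp's Theorem~6.57 proves. The standard argument is: given $k\in N_K(\agot)$, use the simply transitive action of $W_\Sigma$ on the Weyl chambers (and the surjectivity you have already established via the elements $k_\lambda$) to reduce to the case where ${\rm Ad}(k)$ preserves the closed positive chamber; then average a regular element $H_0$ of the open chamber over the finite cyclic group generated by ${\rm Ad}(k)\vert_\agot$ to produce a regular $H'\in\agot$ fixed by ${\rm Ad}(k)$; finally show that $Z_K(H')=Z_K(\agot)$ for $H'$ regular, using that $Z_\ggot(H')\cap\pgot=\agot$ and the Cartan decomposition of the reductive group $Z_G(H')$. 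Without this step your $\Psi$ is not known to be well defined as a map into $W_\Sigma$, so the identification $W_\Sigma\simeq N_K(\agot)/Z_K(\agot)$ is not yet proved. (A very minor further remark: the spanning of $\agot^*$ by $\Sigma$ follows from semisimplicity of $\ggot$ --- an $H\in\agot$ killed by all restricted roots is central --- rather than from $Z_\ggot(\agot)\cap\pgot=\agot$; the latter fact is what you need instead to see that $[X,\theta X]$ lies in $\agot$.)
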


With the help of a system  of positive roots $\Sigma^+$, we define the closed chamber 
$$
\agot_+:=\left\{H\in\agot\ \vert \ \langle \lambda, H\rangle\geq 0 \ 
{\rm for\ all}\ \lambda\in\Sigma^+\right\}.
$$
Proposition \ref{prop:restricted-root} tell us then that any $K$-orbit in $\pgot$ intersects $\agot_+$ in a unique 
point.

We have the fundamental 

\begin{prop}\label{prop:a-plus}
For a particular system of positive roots $\Sigma^+$, we have  
$$
\agot_+=\sum_{k=1}^r \R^{\geq 0}(X_1+\cdots+ X_k).
$$
\end{prop}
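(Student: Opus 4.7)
The plan is to identify the restricted root system $\Sigma\subset\agot^*$ explicitly, pick a suitable set of positive roots, and then read off the chamber $\agot_+$ as a cone in the coordinates $(t_1,\dots,t_r)$ with respect to the basis $X_1,\dots,X_r$.

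First I introduce coordinates. Let $\{e_1,\ldots,e_r\}\subset \agot^*$ be the basis dual (up to a common positive scalar) to $\{X_1,\ldots,X_r\}$, so that $\langle e_i,X_j\rangle=c\,\delta_{ij}$ with $c>0$. The classical theory of Hermitian symmetric pairs (Moore's theorem; see e.g.\ Helgason, \emph{Differential Geometry, Lie Groups and Symmetric Spaces}, Ch.~X) applied to the cascade $\Psi=\{\gamma_1,\ldots,\gamma_r\}$ shows that
$$
\Sigma \;\subset\; \{\pm e_i\pm e_j\ (i<j)\}\cup \{\pm 2e_i\}\cup \{\pm e_i\},
$$
with $\pm e_i\pm e_j$ and $\pm 2e_i$ always occurring, and $\pm e_i$ occurring exactly in the non-tube (BC) case. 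Two structural inputs drive this: (a) the strong orthogonality of the $\gamma_k$ together with the $\slgot(2)$-triples $(iH_k,E_{\gamma_k},E_{-\gamma_k})$ forces the ``long'' roots $\pm 2e_k$ to appear (they come from the eigenvectors of $\ad(X_k)$ inside $\slgot(2,\C)$); (b) the restriction to $\agot$ of roots $\alpha\in\Rgot$ with $\alpha\pm\tfrac12(\gamma_i\pm\gamma_j)$ behavior contributes the remaining types. In particular $\Sigma$ is an irreducible root system of type $C_r$ or $BC_r$.

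Second, I choose the positive system $\Sigma^+$ determined by the simple roots
$$
\tfrac{e_1-e_2}{2},\ \tfrac{e_2-e_3}{2},\ \ldots,\ \tfrac{e_{r-1}-e_r}{2},\ \text{and}\ e_r\ (\text{resp.\ } 2e_r),
$$
so that $\Sigma^+=\{e_i\pm e_j:i<j\}\cup\{2e_i\}$ (with $\{e_i\}$ adjoined in the BC case). For $H=\sum_k t_k X_k\in\agot$, the inequalities $\langle\lambda,H\rangle\ge 0$ for $\lambda$ a simple root translate, up to the positive scalar $c$, into
$$
t_1\ge t_2\ge\cdots\ge t_r\quad\text{and}\quad t_r\ge 0,
$$
and these conditions are clearly equivalent to $\langle\lambda,H\rangle\ge 0$ for all $\lambda\in\Sigma^+$. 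Hence
$$
\agot_+\;=\;\Big\{\sum_{k=1}^r t_k X_k\ :\ t_1\ge t_2\ge\cdots\ge t_r\ge 0\Big\}.
$$

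Third, I perform the linear change of variables $s_k:=t_k-t_{k+1}$ (with the convention $t_{r+1}:=0$). The condition $s_k\ge 0$ for all $k$ is equivalent to $t_1\ge\cdots\ge t_r\ge 0$, and
$$
\sum_{k=1}^r t_k X_k \;=\; \sum_{k=1}^r s_k\,(X_1+\cdots+X_k),
$$
which yields the stated identity $\agot_+=\sum_{k=1}^r\R^{\geq 0}(X_1+\cdots+X_k)$.

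The main obstacle is step one: producing the restricted root system in the explicit form above, and in particular verifying that the ``end'' simple root $2e_r$ (or $e_r$) is present, since this is what forces $t_r\ge 0$ rather than only the chain of inequalities $t_1\ge\cdots\ge t_r$. This rests on the fact that each $\gamma_k\in\Rgot_n^{+,z_o}$ itself restricts (up to the identification $\tgot^*\supset \gamma_k\leadsto e_k\in\agot^*$ via the Cayley transform associated to the $\slgot(2)$-triple $(iH_k,E_{\gamma_k},E_{-\gamma_k})$) to $\pm 2e_k$, while strong orthogonality of the $\gamma_k$ guarantees the independence of these directions. Once this structural input is in hand, the final two steps are elementary.
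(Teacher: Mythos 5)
Your proof is correct and follows essentially the same route as the paper's Appendix B: both identify the restricted root system in the coordinates attached to $X_1,\ldots,X_r$ as type $C_r$ or $BC_r$ via the Harish-Chandra--Moore theorem (the paper makes this identification precise through the projection onto ${\rm Vect}(H_1,\ldots,H_r)$ and the Cayley transform $\cay=\exp(-\frac{i\pi}{2}\ad(\sum_k Y_k))$, which sends $iX_k$ to $\frac12 H_k$), then read off the chamber as $t_1\geq\cdots\geq t_r\geq 0$ and telescope. One cosmetic point: the simple roots you list, $\frac{e_i-e_{i+1}}{2}$, are off by a factor of $2$ from the root system $\{\pm e_i\pm e_j,\pm 2e_i\}$ you wrote down, but since positive rescaling of each root leaves the chamber unchanged this does not affect the argument.
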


\begin{proof}The proof is done in Appendix B.
\end{proof}

An element $X$ of the chamber $\sum_{k=1}^r \R^{\geq 0}(X_1+\cdots+ X_k)$ is of the form 
$X=\sum_{k=1}^r t_k X_k$ with $t_1\geq \cdots\geq t_r\geq 0$. Then 
$\Phi_K(X)$, view as an element of $\kgot$, is equal to
\begin{eqnarray*}
\Phi_K(X)&=& -[X,[z_o,X]]\\
&=& \sum_{k,l} t_kt_l [X_k,Y_l]\\
&=& -\frac{1}{2}\sum_{k=1}^r (t_k)^2 H_k.
\end{eqnarray*}
Here we have used the fact $[X_k,Y_l]=0$ for $k\neq l$ since 
$[\ggot_{\gamma_k},\ggot_{\pm\gamma_l}]=0$. When $k=l$, 
one sees that $[X_k,Y_k]= \frac{i}{2}[E_{\gamma_k},E_{-\gamma_k}]= -\frac{1}{2}H_k$.

Since the vector $-\frac{1}{2}H_k\in\tgot$ corresponds to $\frac{\gamma_k}{\|\gamma_k\|^2}$ 
through the identification $\tgot\simeq\tgot^*$. We conclude that 
$$
\Phi_K(X)= \sum_{k=1}^r (t_k)^2 \frac{\gamma_k}{\|\gamma_k\|^2}\in\tgot^*
$$
for $X=\sum_{k=1}^r t_k X_k$.

Let $\tgot^*_+\subset\tgot^*$ be the Weyl chamber defined the by the system of positive compact roots $\Rgot^+_c$. 
Let $\overline{\Chol}\subset\tgot^*_+$ be the Weyl chamber defined by the system of positive roots $\Rgot^+_{\rm hol}$. 
The following proposition will be proved in Appendix B.

\begin{prop}\label{prop:appendixB-2}
All the roots $\gamma_k$ have the same lenghts, and we have 
$$
\overline{\Chol}\cap {\rm Vect}(\gamma_1,\ldots,\gamma_r)=\sum_{k=1}^r \R^{\geq 0}(\gamma_1+\cdots+\gamma_k).
$$
In particular, the weight $\gamma_1+\cdots+\gamma_k$ is dominant for any $k=1,\ldots, r$.
\end{prop}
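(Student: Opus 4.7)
The plan has two parts, matching the two assertions of the proposition.

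\textbf{Equal lengths.} I would deduce this from the Weyl group $W_\Sigma = N_K(\agot)/Z_K(\agot)$ of Proposition \ref{prop:restricted-root}, which acts on $\agot$ by orthogonal transformations for the Killing form. In the Hermitian symmetric setting, the restricted root system $\Sigma$ is of type $C_r$ (tube case) or $BC_r$ (non-tube case); the proof of Proposition \ref{prop:a-plus}, in identifying $\agot_+$, pins this type down. In either case $W_\Sigma$ realises every signed permutation of the basis $X_1, \ldots, X_r$ of $\agot$, so these vectors share a common Killing-norm. Combined with the direct computation $B_\ggot(X_k,X_k) = \tfrac12 B_\ggot(E_{\gamma_k}, E_{-\gamma_k}) = 1/\|\gamma_k\|^2$ from (\ref{eq:sl2}), this forces $\|\gamma_1\| = \cdots = \|\gamma_r\|$.

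\textbf{Equality of cones.} Write $\|\gamma\|$ for the common length, set $\sigma_k := \gamma_1 + \cdots + \gamma_k$ and $V := \mathrm{Vect}(\gamma_1, \ldots, \gamma_r)$, and let $C_1 := \sum_k \R^{\geq 0}\sigma_k$ and $C_2 := \overline{\Chol} \cap V$. Strong orthogonality makes the $\gamma_j$'s pairwise orthogonal, so expanding $\xi = \sum a_k \gamma_k \in V$ yields
\[
\xi \in C_1 \iff a_1 \geq a_2 \geq \cdots \geq a_r \geq 0.
\]
Hence the dual cone $C_1^* \subset V$ (for the induced inner product) is generated by $\gamma_r$ together with $\gamma_k - \gamma_{k+1}$ for $k=1,\ldots,r-1$. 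On the other hand, $C_2$ is cut out by the inequalities $(\xi,\alpha)\geq 0$ as $\alpha$ ranges over the simple roots of $\Rgot^+_{\rm hol}$ --- namely the simple compact roots together with $\beta_\m$ --- so its dual $C_2^*$ is generated by the $V$-projections $\pi_V(\alpha)$ of those simple roots. I reduce the proposition to $C_1^* = C_2^*$.

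To match generators I would establish: (i) $\pi_V(\beta_\m)$ is a positive multiple of $\gamma_r$, and (ii) for each $k = 1, \ldots, r-1$ there is a compact simple root $\alpha_k$ with $\pi_V(\alpha_k)$ a positive multiple of $\gamma_k - \gamma_{k+1}$, while the remaining compact simple roots project into the nonnegative span of the previous generators. Both identities flow from the $C_r$/$BC_r$ structure of $\Sigma$: concretely, $\alpha_k$ is the compact simple root whose reflection implements the transposition $X_k \leftrightarrow X_{k+1}$ in $W_\Sigma$, while $\pi_V(\beta_\m) \propto \gamma_r$ reflects the fact that $\beta_\m$ (the lowest weight of $\pgot^+$) and $\gamma_r$ both sit at the bottom of the cascade. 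Once $C_1^* = C_2^*$ is secured we get $C_1 = C_2$, and the ``in particular'' statement is the immediate consequence that each $\sigma_k$ lies in $\overline{\Chol}$.

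\textbf{Main obstacle.} The most delicate step is the explicit computation of the projections $\pi_V(\alpha)$ for the simple roots of $\Rgot^+_{\rm hol}$, requiring careful control of how compact roots pair with each $\gamma_k$. A uniform argument (avoiding case-by-case classification) proceeds through the Cayley transform $\cay = \cay_{\gamma_r} \cdots \cay_{\gamma_1}$, which identifies $\agot$ with $\bigoplus_k \R H_{\gamma_k} \subset \tgot$ and transports the $\Sigma$-structure back to $\tgot^*$, reducing the required pairings to standard $C_r$/$BC_r$ data.
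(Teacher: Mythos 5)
Your strategy is sound and, once unwound, rests on exactly the same input as the paper's proof: the Harish-Chandra--Moore description of the projection of the root system onto $\mathrm{Vect}(\gamma_1,\ldots,\gamma_r)$. The paper's Appendix B introduces the orthogonal projection $\pi':\tgot^*\to\tgot_2^*$ with $\tgot_2^*=\mathrm{Vect}(\gamma_1,\ldots,\gamma_r)$, observes that $\overline{\Chol}\cap\tgot_2^*$ is cut out by the inequalities $(\xi,\beta)\geq 0$ for $\beta\in(\Sigma')^+:=\pi'(\Rgot^+_{\rm hol})\setminus\{0\}$, and then quotes the theorem of Harish-Chandra and Moore giving $(\Sigma')^+$ explicitly: it contains $\frac12(\gamma_i-\gamma_j)$ for all $i<j$ (coming from compact roots) together with the $\gamma_i$, and is contained in the $C_r$ or $BC_r$ pattern $\{\frac12(\gamma_i\pm\gamma_j)\}\cup\{\gamma_i\}\cup\{\frac12\gamma_i\}$; the equal-length statement is part of that same citation. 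From there the cone $\{a_1\geq\cdots\geq a_r\geq 0\}$ drops out in one line, with no dualization. Your version differs in two cosmetic ways: you re-derive the equal lengths from the signed-permutation action of $W_\Sigma$ on the $X_k$ (legitimate, and the computation $B_\ggot(X_k,X_k)=\|\gamma_k\|^{-2}$ is correct, but knowing that $W_\Sigma$ contains all signed permutations is again the $C_r$/$BC_r$ classification, so you gain no independence from the citation); and you pass to dual cones and match generators coming from the simple roots of $\Rgot^+_{\rm hol}$ rather than imposing the inequalities from all projected positive roots. The one place where your write-up is genuinely incomplete is the step you flag yourself: the identities $\pi_V(\beta_\m)\propto\gamma_r$ and $\pi_V(\alpha_k)\propto\gamma_k-\gamma_{k+1}$ are not a routine computation to be done later --- they \emph{are} the Harish-Chandra--Moore theorem (supplemented by the observation that compact roots, being orthogonal to $z_o$, can only project to $0$ or $\pm\frac12(\gamma_i-\gamma_j)$, while noncompact positive roots project to $\frac12(\gamma_i+\gamma_j)$, $\gamma_i$ or $\frac12\gamma_i$). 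Once you name and invoke that result, your dual-cone bookkeeping closes the argument; as written, the proposal defers its only nontrivial content.
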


\medskip

We know then that $\Phi_K(X)=\frac{1}{\|\gamma_1\|^2}\sum_{k=1}^r (t_k)^2 \gamma_k$ 
belongs to the Weyl chamber $\tgot^*_+$ if $X=\sum_{k=1}^r t_k X_k$ belongs to the 
chamber $\agot_+$. Hence, the moment map $\Phi_K:\pgot\to \kgot^*$ defines a one to one map between $\agot_+$ 
and the the cone $\sum_{k=1}^r \R^{\geq 0}(\gamma_1+\cdots+ \gamma_k)\subset\tgot^*_+$. Using now the 
fact that $\agot_+$ and $\tgot^*_+$ are respectively fundamental domains for the $K$-action on 
$\pgot$ and $\kgot^*$, we get the following

\begin{prop}
$\bullet$ The Kirwan polyhedral cone $\Delta_K(\pgot)$ is equal to 
\begin{equation}\label{eq:delta-K-p}
\sum_{k=1}^r \R^{\geq 0}(\gamma_1+\cdots+ \gamma_k).
\end{equation}

$\bullet$ The $K$-Hamiltonian space $(\pgot,\Omega_\pgot)$ is without multiplicities : for any 
$\xi\in\kgot^*$, the fiber $\Phi^{-1}_K(K\cdot\xi)\subset \pgot$ is a $K$-orbit.

\end{prop}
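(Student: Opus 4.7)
The plan is to exploit the decomposition $\pgot = K \cdot \agot_+$ together with the explicit formula for $\Phi_K|_{\agot}$ derived above, and the fact that $\tgot^*_+$ is a fundamental domain for the coadjoint $K$-action on $\kgot^*$. The main task reduces to understanding $\Phi_K(\agot_+)$ as a subset of $\tgot^*_+$.

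First, I would check that $\Phi_K(\agot_+) \subset \tgot^*_+$ and identify the image. For $X = \sum_{k=1}^r t_k X_k \in \agot_+$, so $t_1 \geq t_2 \geq \cdots \geq t_r \geq 0$, the computed formula reads $\Phi_K(X) = \frac{1}{\|\gamma_1\|^2} \sum_{k=1}^r t_k^2\, \gamma_k$. Setting $s_k := t_k^2 - t_{k+1}^2 \geq 0$ with the convention $t_{r+1} = 0$, an Abel summation rewrites this as $\frac{1}{\|\gamma_1\|^2}\sum_{k=1}^r s_k(\gamma_1 + \cdots + \gamma_k)$. By Proposition \ref{prop:appendixB-2}, this expression lies in $\overline{\Chol} \subset \tgot^*_+$. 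Moreover, strong orthogonality makes $\gamma_1,\ldots,\gamma_r$ linearly independent, so each point of the cone $C := \sum_{k=1}^r \R^{\geq 0}(\gamma_1 + \cdots + \gamma_k)$ is uniquely of this form with $s_k \geq 0$; combined with the fact that $(t_k) \mapsto (t_k^2)$ is a bijection of $\{t_1 \geq \cdots \geq t_r \geq 0\}$ onto itself, this shows $\Phi_K|_{\agot_+}$ is a bijection onto $C$.

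From here the first assertion follows immediately: $\Delta_K(\pgot) = \Phi_K(\pgot) \cap \tgot^*_+ = K \cdot \Phi_K(\agot_+) \cap \tgot^*_+ = \Phi_K(\agot_+) = C$, where the last equality uses $\Phi_K(\agot_+) \subset \tgot^*_+$ together with the fact that each coadjoint $K$-orbit meets $\tgot^*_+$ in a single point. For the multiplicity-free statement, take $\xi \in \kgot^*$ and, by $K$-equivariance, assume $\xi \in \tgot^*_+$. Any $X \in \agot_+$ with $\Phi_K(X) \in K \cdot \xi$ must then satisfy $\Phi_K(X) = \xi$ (both points lie in the fundamental chamber), and the injectivity of $\Phi_K|_{\agot_+}$ established in the previous step forces $X$ to be unique. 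Since $\agot_+$ parametrizes the $K$-orbits in $\pgot$, the fiber $\Phi_K^{-1}(K \cdot \xi)$ is then a single $K$-orbit whenever it is non-empty. I do not anticipate a substantive obstacle; the only mildly delicate point is the injectivity of $\Phi_K|_{\agot_+}$, which rests entirely on the linear independence of $\gamma_1,\ldots,\gamma_r$ (a direct consequence of strong orthogonality).
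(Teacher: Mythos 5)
Your argument is correct and follows essentially the same route as the paper: compute $\Phi_K$ on the chamber $\agot_+$, identify its image with the cone $\sum_{k=1}^r\R^{\geq 0}(\gamma_1+\cdots+\gamma_k)$ using Proposition \ref{prop:appendixB-2}, and conclude via the fact that $\agot_+$ and $\tgot^*_+$ are fundamental domains for the $K$-actions on $\pgot$ and $\kgot^*$. Your Abel-summation and linear-independence justification of the bijectivity of $\Phi_K\vert_{\agot_+}$ simply makes explicit a step the paper leaves implicit.
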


We can summarize the results of Sections \ref{sec:proper} and \ref{sec:delta-K-p} in the following

\begin{theo}\label{theo:equivalence}

Let $V_{\Lambda}^K\otimes S(\pgot^+)$ be the admissible $K$-representation attached to 
$\Lambda\in \Ccal_{\rm hol}$. Let $H$ be a closed connected Lie subgroup of $K$. Let 
$\Phi_H:\Ocal_\Lambda\to\hgot^*$ be the moment map relative to the action of $H$ on the coadjoint 
orbit $\Ocal_\Lambda$. The following statement are equivalent:
\begin{enumerate}
\item The map $\Phi_H:\Ocal_\Lambda\to\hgot^*$ is {\em proper}.
\item The $H$-multiplicities in $V_{\Lambda}^K\otimes S(\pgot^+)$ are {\em finite}.
\item The subalgebra $S(\pgot^+)^H$ formed by the $H$-invariant elements is reduced to the constants.
\item We have 
\begin{equation}\label{condition-admissible}
\left(\sum_{k=1}^r \R^{\geq 0}(\gamma_1+\cdots+ \gamma_k)\right)\bigcap K\cdot\hgot^\perp = \{0\}.
\end{equation}
\end{enumerate}
\end{theo}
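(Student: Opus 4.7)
The strategy is to run a cycle $(1)\Rightarrow(2)\Rightarrow(3)\Leftrightarrow(4)\Rightarrow(1)$, using as black boxes the results already established in Sections~\ref{sec:proper} and~\ref{sec:delta-K-p}. The two crucial inputs are the identification
\begin{equation*}
\Delta_K(\pgot)=\sum_{k=1}^r \R^{\geq 0}(\gamma_1+\cdots+\gamma_k),
\end{equation*}
proved along (\ref{eq:delta-K-p}), and the asymptotic identity
$\As(\Delta_K(\Ocal_\Lambda))=\Delta_K(\pgot)$ of Proposition~\ref{prop:Delta-K-Ocal-Lambda}.

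The implication $(1)\Rightarrow(2)$ is exactly Lemma~\ref{lem:C2impliqueC1}. For $(2)\Rightarrow(3)$, I would observe that if $V_\Lambda^K\otimes S(\pgot^+)\vert_H$ is $H$-admissible, then so is its tensor product with the finite-dimensional $K$-representation $(V_\Lambda^K)^*$, because tensoring an $H$-admissible representation by a finite-dimensional one preserves $H$-admissibility. Since $(V_\Lambda^K)^*\otimes V_\Lambda^K$ contains the trivial $K$-representation as a direct summand, $S(\pgot^+)\vert_H$ is an $H$-direct summand of this tensor product; it is therefore itself $H$-admissible, which by Lemma~\ref{lem:closed-orbit-p} is equivalent to $S(\pgot^+)^H=\C$, giving~(3).

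The equivalence $(3)\Leftrightarrow(4)$ then follows at once: by Lemma~\ref{lem:closed-orbit-p}, condition~(3) is equivalent to $\Delta_K(\pgot)\cap K\cdot\hgot^\perp=\{0\}$, and substituting the explicit description of $\Delta_K(\pgot)$ from (\ref{eq:delta-K-p}) gives exactly~(4). Finally, for $(4)\Rightarrow(1)$, I combine (\ref{eq:delta-K-p}) with Proposition~\ref{prop:Delta-K-Ocal-Lambda} to rewrite~(4) as
\begin{equation*}
\As\bigl(\Delta_K(\Ocal_\Lambda)\bigr)\cap K\cdot\hgot^\perp=\{0\},
\end{equation*}
and then invoke Lemma~\ref{lem:propre}, applied to the proper Hamiltonian $K$-manifold $\Ocal_\Lambda$, to conclude that $\Phi_H:\Ocal_\Lambda\to\hgot^*$ is proper. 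Every nontrivial analytic ingredient has been localized in the earlier Propositions and Lemmas, so the theorem itself requires only the bookkeeping described above; the only step that is not completely formal is $(2)\Rightarrow(3)$, where one must be careful to pass from admissibility of the twisted module back to admissibility of $S(\pgot^+)\vert_H$, but the direct-summand argument above handles this cleanly.
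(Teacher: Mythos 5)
Your proposal is correct and follows essentially the same route as the paper: $(1)\Rightarrow(2)$ via Lemma \ref{lem:C2impliqueC1}, reduction of $(2)$ to admissibility of $S(\pgot^+)\vert_H$ and hence to $(3)$ and to $\Delta_K(\pgot)\cap K\cdot\hgot^\perp=\{0\}$ via Lemma \ref{lem:closed-orbit-p}, the identification $\Delta_K(\pgot)=\sum_k\R^{\geq 0}(\gamma_1+\cdots+\gamma_k)$ for $(4)$, and Proposition \ref{prop:Delta-K-Ocal-Lambda} plus Lemma \ref{lem:propre} for $(4)\Rightarrow(1)$. The only place you add detail is the tensor-with-$(V_\Lambda^K)^*$ direct-summand argument for passing from admissibility of $V_\Lambda^K\otimes S(\pgot^+)\vert_H$ to that of $S(\pgot^+)\vert_H$, which the paper states without proof and which your argument justifies correctly.
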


\begin{rem}
Note that the condition (\ref{condition-admissible}) holds trivially when $H=K$ since then 
$K\cdot\hgot^\perp=\{0\}$. When $H$ is equal to the center $Z(K)\subset K$, the set 
$K\cdot\hgot^\perp= {\rm Lie}(Z(K))^\perp$ intersects 
$\sum_{k=1}^r \R^{\geq 0}(\gamma_1+\cdots+ \gamma_k)$ only at $0$ since 
$\langle\gamma_k,z_o\rangle=1$ for all $k=1,\ldots,r$.
\end{rem}

\bigskip

We finish this section by considering the example of $\SU(p,q)$, with $1\leq p\leq q$. 
A maximal compact subgroup of $\SU(p,q)$ is $K=\S(\U(p)\times\U(q))$. 
The maximal torus $T\subset K$ is composed by the diagonal matrices. The dual of 
its Lie algebra is 
$$
\tgot^*:=\{(x_1,\ldots, x_{p+q})\in \R^{p+q}\ \vert \ \sum_j x_j=0\}.
$$
The vector space $\pgot^+$ is the complex vector space ${\rm M}_{p,q}(\C)$ of 
complex $p\times q$ matrices. The action of $K=\S(\U(p)\times\U(q))$ on $\pgot^+={\rm M}_{p,q}(\C)$
is defined by $(g,h)\cdot M=gMh^{-1}$.

The Weyl chamber relative to a system of positive compact roots $\Rgot_c^+$  is 
$$
\tgot^*_+:=\left\{(x_1,\ldots, x_{p+q})\in \tgot^*\ \vert \ x_1\geq \cdots \geq x_p\ {\rm and}\ 
x_{p+1}\geq \cdots \geq x_{p+q}\right\}.
$$
The Weyl chamber relative to a system of positive roots $\Rgot^+_{\rm hol}$ is 
$$
\overline{\Chol}:=\left\{(x_1,\ldots, x_{p+q})\in \tgot^*\ \vert \ x_1\geq \cdots \geq x_p\geq 
x_{p+1}\geq \cdots \geq x_{p+q}\right\}.
$$
A family of strongly orthogonal roots is $\Psi=\{\gamma_1,\ldots,\gamma_p\}$ where\footnote{Here 
$\{e_1,\ldots,e_{p+q}\}$ is the canonical basis of $\R^{p+q}$.} 
$$
\gamma_j=e_j-e_{p+q-j+1}.
$$ 
Hence the cone $\sum_{k=1}^p \R^{\geq 0}(\gamma_1+\cdots+ \gamma_k)$ is equal to 
$$
\Dcal:=\Big\{(x_1,\ldots, x_p,\underbrace{0,\ldots,0}_{q-p\ {\rm times}},-x_p,\ldots,-x_1)\ \vert\ 
x_1\geq \cdots \geq x_p\geq 0\Big\}.
$$

Let us consider the normal subgroups $\SU(p)$ and $\SU(q)$ of $K$. If $H=\SU(p)$, it is not hard to see
 that 
$$
\hgot^\perp\cap\tgot^*=\Big\{(\underbrace{x,\ldots,x}_{p\ {\rm times}},y_1,\ldots, y_{q})\ 
\vert \ px +\sum_j y_j=0\Big\}.
$$
so $\hgot^\perp\cap \Dcal$ contains the non-zero element 
$(\underbrace{1,\ldots,1}_{p\ {\rm times}},\underbrace{0,\ldots,0}_{q-p\ {\rm times}},-1,\ldots,-1)$.
Thus we know from Theorem \ref{theo:equivalence} that 
\begin{enumerate}
\item the holomorphic discrete series representations 
of $\SU(p,q)$ does not have a admissible restriction to $\SU(p)$,
\item the algebra $S({\rm M}_{p\times q}(\C))$ has an homogeneous $\SU(p)$-invariant element 
with strictly positive degree.
\end{enumerate}

Consider now the case where $H=\SU(q)$ with $p<q$. We see
 that 
$$
\hgot^\perp\cap\tgot^*=\Big\{(x_1,\ldots,x_p,\underbrace{y,\ldots,y}_{q\ {\rm times}})\ 
\vert \ \sum_k x_k +qy=0\Big\}.
$$
and that $\hgot^\perp\cap \Dcal=\{0\}$. From Theorem \ref{theo:equivalence} we have then that, if $p<q$,
\begin{enumerate}
\item the holomorphic discrete series representations 
of $\SU(p,q)$ have an admissible restriction to $\SU(q)$,
\item the algebra $S({\rm M}_{p\times q}(\C))$ does not have an homogeneous $\SU(q)$-invariant element 
with strictly positive degree.
\end{enumerate}

%%%%%%%%%%%%%%%%%%%%%%%%%%%%%%%%%%%%%%%%%%%%%%%%%%%%%%%%%%
%%%%%%%%%%%%%%%%%%%%%%%%%%%%%%%%%%%%%%%%%%%%%%%%%%%%%%%%%%
\section{Multiplicities of the discrete series}\label{sec:disc-series}
%%%%%%%%%%%%%%%%%%%%%%%%%%%%%%%%%%%%%%%%%%%%%%%%%%%%%%%%%%
%%%%%%%%%%%%%%%%%%%%%%%%%%%%%%%%%%%%%%%%%%%%%%%%%%%%%%%%%%

Let $G$ be a real, connected, semi-simple Lie group with finite center.  
Let $K$ be a maximal compact subgroup of $G$, and $T$ be a 
maximal torus in $K$. For the remainder of this section, we 
assume that $T$ is a Cartan subgroup of $G$. The discrete series of 
$G$ is then non-empty and is parametrized by a subset $\widehat{G}_{d}$ in 
the dual $\tgot^*$ of the Lie algebra of $T$ \cite{Harish-Chandra65-66}.

Let us fix some notation.
Let $\Rgot_c\subset\Rgot\subset\wedge^{*}$ be respectively the set of 
(real) roots for the action of $T$ on $\kgot\otimes\C$ and 
$\ggot\otimes\C$. We choose a system of positive roots $\Rgot_{c}^{+}$ for 
$\Rgot_{c}$, we denote by $\tgot^{*}_{+}$ the corresponding Weyl chamber, 
and we let $\rho_{c}$ be half the sum of the elements of $\Rgot_{c}^{+}$. 

An element $\lambda\in \tgot^{*}$ is called {\em regular} if 
$(\lambda,\alpha)\neq 0$ for every $\alpha\in\Rgot$, or equivalently,  
if the stabilizer subgroup of $\lambda$ in $G$ is $T$. 
Given a system of positive roots $\Rgot^{+}$ for $\Rgot$, consider 
the subset $\wedge^{*}+\frac{1}{2}\sum_{\alpha\in\Rgot^{+}}\alpha$ of
$\tgot^*$. It does not depend on the choice of $\Rgot^{+}$, and we denote it 
by $\wedge^{*}_{\rho}$ \cite{Duflo77}. 
 
\medskip 
 
The discrete series of $G$ are parametrized by 
\begin{equation}\label{G-hat}
    \widehat{G}_{d}:=\{\lambda\in\tgot^{*},\lambda\ {\rm regular}\ 
    \}\cap \wedge^{*}_{\rho}\cap \tgot^{*}_{+}\ .
\end{equation}

An element $\lambda\in \widehat{G}_{d}$  determines a choice 
$\Rgot^{+,\lambda}$ of positive roots for the $T$-action on 
$\ggot\otimes\C$ : $\alpha\in \Rgot^{+,\lambda}\Longleftrightarrow 
(\alpha,\lambda)>0$. We have $\Rgot^{+,\lambda}=\Rgot^+_c\cup \Rgot^{+,\lambda}_n$
and we define 
$$
\rho_n(\lambda):=\frac{1}{2}\sum_{\beta\in \Rgot^{+,\lambda}_n}\beta,
$$
Note that the {\em Blattner parameter} 
$$
\Lambda(\lambda):= \lambda-\rho_c +\rho_n(\lambda)
$$ 
is a dominant weight for any $\lambda\in \widehat{G}_{d}$. We work in this section under 
Condition (\ref{condition-lambda}), which states that $\beta\in \Rgot^{+,\lambda}_n\Longleftrightarrow 
(\beta,\Lambda(\lambda))>0$. This implies in particular, that the dominant weight $\Lambda(\lambda)$ does not 
belong to the non-compact walls. 

Let us consider the coadjoint orbit $\Ocal_{\Lambda(\lambda)}:=G\cdot \Lambda(\lambda)$. 
It is a $G$-Hamiltonian manifold  which is prequantized by the line bundle 
$\Lcal_{\Lambda(\lambda)}:=G\times_{K_{\Lambda(\lambda)}}\C_{\Lambda(\lambda)}$. We equip
$\Ocal_{\Lambda(\lambda)}$ with the $G$-invariant {\em almost complex structure} $J_{\Lambda(\lambda)}$ 
which is characterized by the following fact. The bundle 
$\T^{1,0}\Ocal_{\Lambda(\lambda)}\to \Ocal_{\Lambda(\lambda)}$ is equal, above 
$\Lambda(\lambda)\in\Ocal_{\Lambda(\lambda)}$, to the $T$-module
$$
\sum_{\substack{\alpha\in \Rgot_c\\ \langle\alpha,\Lambda(\lambda)\rangle > 0}}\ggot_\alpha \ \oplus
\underbrace{\sum_{\substack{\beta\in \Rgot_n\\ \langle\beta,\Lambda(\lambda)\rangle < 0}}
\ggot_\beta}_{\pgot(\lambda)^-} .
$$
Similarly we note $\pgot(\lambda)^+:=\sum_{\beta\in \Rgot^{+,\lambda}_n}\ggot_\beta\subset\pgot\otimes\C$. 
Note that the {\em almost complex structure} $J_{\Lambda(\lambda)}$ is compatible with the symplectic structure on 
$\Ocal_{\Lambda(\lambda)}$, but in genaral $J_{\Lambda(\lambda)}$ is not integrable. 

\medskip

Let $\Hcal_\lambda$ be a discrete series representation attached to $\lambda\in \widehat{G}_{d}$. Recall that 
the restriction $\Hcal_\lambda\vert_K$ is an admissible representation.

The main result of this section is  
\begin{theo}\label{theo:qfor-lambda-lambda} 
If $\lambda\in \widehat{G}_{d}$ satisfy condition (\ref{condition-lambda}) we have 
\begin{equation}\label{eq:theo-deux}
\Hcal_\lambda\vert_K=\qfor_K(\Ocal_{\Lambda(\lambda)}).
\end{equation}
\end{theo}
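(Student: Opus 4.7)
My plan is to extend the strategy of the proof of Theorem \ref{theo:formal-lambda} to the general (non-holomorphic) discrete series. Define on $\Ocal_{\Lambda(\lambda)}$ the symbol
$$
\tau_{\Lambda(\lambda)}:=\Thom(\Ocal_{\Lambda(\lambda)},J_{\Lambda(\lambda)})\otimes\Lcal_{\Lambda(\lambda)},
$$
and let $\sigma_{\Lambda(\lambda)}$ be its version pushed by the Hamiltonian vector field $\Hcal$ of $-\frac{1}{2}\|\Phi_K\|^2$, exactly as in Definition \ref{def:sigma-Lambda}. Two preliminary facts are needed: first, properness of $\Phi_K:\Ocal_{\Lambda(\lambda)}\to\kgot^*$, which follows from $\Lambda(\lambda)$ being strongly elliptic (as in \cite{Duflo-Heckman-Vergne,pep3}) and ensures that $\qfor_K(\Ocal_{\Lambda(\lambda)})$ is defined; and second, the analog $\Cr(\|\Phi_K\|^2)=K\cdot\Lambda(\lambda)$ of Lemma \ref{lem:critical}, which makes $\sigma_{\Lambda(\lambda)}$ a $K$-transversally elliptic symbol. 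Note that the non-integrability of $J_{\Lambda(\lambda)}$ plays no role here, since the Thom symbol is defined for any almost complex structure.

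The shifting-trick argument of Section \ref{sec:deformation} now transposes essentially verbatim. On $\Ocal_{\Lambda(\lambda)}\times\overline{K\cdot\mu}$, form the family $\Phi_t(m,\xi):=\Phi_K(m)-t\xi$. The analog of Proposition \ref{prop:C-t-compact} (the critical sets of $\|\Phi_t\|^2$ remain in a fixed compact slice $\Kcal\times K\cdot\mu$) holds by the same Vergne argument given in the Appendix; together with the homotopies of pushed Thom symbols and Theorem \ref{theo:loc-index}, one concludes
$$
\left[\indice^K(\sigma_{\Lambda(\lambda)})\otimes (V_\mu^K)^*\right]^K = \Qcal\bigl((\Ocal_{\Lambda(\lambda)})_\mu\bigr),
$$
so that $\indice^K(\sigma_{\Lambda(\lambda)})=\qfor_K(\Ocal_{\Lambda(\lambda)})$ in $\Rforc(K)$.

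It then remains to identify $\indice^K(\sigma_{\Lambda(\lambda)})$ with $\Hcal_\lambda|_K$. Following Section \ref{sec:Q-direct}, I would use the Cartan-decomposition diffeomorphism $\Ocal_{\Lambda(\lambda)}\simeq K\cdot\Lambda(\lambda)\times\pgot$ and Atiyah's induction morphism $j_*$ from $\K_{K_{\Lambda(\lambda)}}(\T_{K_{\Lambda(\lambda)}}\pgot)$ to $\K_K(\T_K\Ocal_{\Lambda(\lambda)})$. After the homotopy used in the proof of Proposition \ref{prop:sigma-sigma}, deforming $J_{\Lambda(\lambda)}$ into the product of $J_{K\cdot\Lambda(\lambda)}$ with the $K_{\Lambda(\lambda)}$-invariant almost complex structure on $\pgot$ whose $(0,1)$-part is $\pgot(\lambda)^-$, the computation of Proposition \ref{prop-indice-sigma-Lambda} generalizes to give
$$
\indice^K(\sigma_{\Lambda(\lambda)})={\rm Ind}^K_{K_{\Lambda(\lambda)}}\!\left(S(\pgot(\lambda)^+)\otimes\wedge^{\bullet}_\C(\kgot/\kgot_{\Lambda(\lambda)})\otimes\C_{\Lambda(\lambda)}\right).
$$

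The main obstacle is this last identification: one must recognize the induced virtual character on the right-hand side as the Hecht--Schmid resolution of Blattner's formula for the $K$-type decomposition of $\Hcal_\lambda$. Condition (\ref{condition-lambda}) is precisely what is needed for the positive non-compact system $\Rgot^{+,\lambda}_n$ governing $\Hcal_\lambda$ to agree with the polarization $\pgot(\lambda)^+$ read off from $J_{\Lambda(\lambda)}$, so that signs from $\wedge^{\bullet}_\C(\kgot/\kgot_{\Lambda(\lambda)})$ and the induction from $K_{\Lambda(\lambda)}$ to $K$ line up with Blattner's alternating sum rather than produce a genuinely different virtual representation. Admissibility of $\Hcal_\lambda|_K$ (Harish-Chandra) ensures the resulting identity is a meaningful equality in $\Rfor(K)$; combined with the first two paragraphs, this yields (\ref{eq:theo-deux}).
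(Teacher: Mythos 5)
Your proposal follows essentially the same route as the paper: the same pushed symbol $\sigma_{\Lambda(\lambda)}$, the same shifting-trick deformation to identify its index with $\qfor_K(\Ocal_{\Lambda(\lambda)})$, and the same Cartan-decomposition/Atiyah-induction computation reducing to $S(\pgot(\lambda)^+)\otimes\wedge^{\bullet}_\C(\kgot/\kgot_{\Lambda(\lambda)})\otimes\C_{\Lambda(\lambda)}$, with condition (\ref{condition-lambda}) playing exactly the role you assign it. The only cosmetic difference is that the paper pushes the induction one step further, from $K_{\Lambda(\lambda)}$ down to $T$, so as to match the form of the Blattner formula established in \cite{pep-ENS}[Section 5.1], which is the citation that closes the final identification you flag as the main obstacle.
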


Like we did before, if we use (\ref{eq:theo-deux}) together with the property {\bf [P2]}, we get Theorem \ref{theo:intro-deux}.

The proof of Theorem \ref{theo:qfor-lambda-lambda} is similar to the proof of Theorem \ref{theo:formal-lambda}. 
We introduce, like in Section \ref{sec:sigma-L}, a $K$-transversaly elliptic 
symbol $\sigma_{\Lambda(\lambda)}$ on $\Ocal_{\Lambda(\lambda)}$ built from the 
data $(\Lcal_{\Lambda(\lambda)}, J_{\Lambda(\lambda)})$ and the moment map 
$\Phi_K: \Ocal_{\Lambda(\lambda)}\to\kgot^*$. The same deformation argument as the one 
used in Section \ref{sec:deformation} shows that 
$$
\indice^K\left(\sigma_{\Lambda(\lambda)}\right)=\qfor_K(\Ocal_{\Lambda(\lambda)}).
$$
Thus  Theorem \ref{theo:qfor-lambda-lambda} follows from the following

\begin{prop}\label{prop-indice-sigma-general} 
If $\lambda\in \widehat{G}_{d}$ satisfy condition (\ref{condition-lambda}), we have 
$$
\indice^K(\sigma_{\Lambda(\lambda)})=\Hcal_\lambda\vert_K\quad {\rm in}\quad \Rforc(K).
$$
\end{prop}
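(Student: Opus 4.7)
The plan is to compute $\indice^K(\sigma_{\Lambda(\lambda)})$ directly, following the template of Proposition \ref{prop-indice-sigma-Lambda}, and then match the result with $\Hcal_\lambda|_K$ via the classical formula for the $K$-types of the discrete series.

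First I would verify the analog of Lemma \ref{lem:critical}: under Condition (\ref{condition-lambda}), $\Lambda(\lambda)$ lies in the same strongly elliptic chamber as $\lambda$, so its $G$-stabilizer equals the compact group $K_{\Lambda(\lambda)}$. The Duflo--Heckman--Vergne argument of \cite{Duflo-Heckman-Vergne,pep3} then shows that the critical set of $\|\Phi_K\|^2$ on $\Ocal_{\Lambda(\lambda)}$ is the compact orbit $K\cdot\Lambda(\lambda)$, so $\sigma_{\Lambda(\lambda)}$ is $K$-transversally elliptic. Transporting to $\Ocal'_{\Lambda(\lambda)}:=K\cdot\Lambda(\lambda)\times\pgot$ via the Cartan diffeomorphism $\Upsilon$, I would deform $\sigma'_{\Lambda(\lambda)}:=\Upsilon_*\sigma_{\Lambda(\lambda)}$ into a product-type symbol $\sigma''_{\Lambda(\lambda)}$ as in Proposition \ref{prop:sigma-sigma}. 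The novelty relative to the holomorphic case is that the complex structure $J_\lambda$ on $\pgot$ (whose holomorphic part is $\pgot(\lambda)^-$, matching $J_{\Lambda(\lambda)}$ along the $\pgot$-direction at $\Lambda(\lambda)$) is only $K_{\Lambda(\lambda)}$-invariant, not $K$-invariant. The $K$-equivariant almost complex structure $J''_{\Lambda(\lambda)}$ on $\Ocal'_{\Lambda(\lambda)}$ must therefore be built by spreading $(J_{K\cdot\Lambda(\lambda)},J_\lambda)$ from $(\Lambda(\lambda),0)$ over $\Ocal'_{\Lambda(\lambda)}$ using the $K$-action; this is consistent, and agrees with $J'_{\Lambda(\lambda)}$ at $(\Lambda(\lambda),0)$, precisely because Condition (\ref{condition-lambda}) forces the non-compact roots picked out by $J_{\Lambda(\lambda)}$ to coincide with $\Rgot^{+,\lambda}_n$. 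The two homotopy arguments of Proposition \ref{prop:sigma-sigma} then go through on a $K$-invariant neighborhood of $K\cdot\Lambda(\lambda)\times\{0\}$.

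The Atiyah induction map $j_*:\K_{K_{\Lambda(\lambda)}}(\T_{K_{\Lambda(\lambda)}}\pgot)\to\K_K(\T_K\Ocal'_{\Lambda(\lambda)})$ then yields
\begin{equation*}
j_*^{-1}(\sigma''_{\Lambda(\lambda)}) \,=\, \Thom^{\Lambda(\lambda)}(\pgot(\lambda)^-)\otimes\wedge^\bullet_\C\kgot/\kgot_{\Lambda(\lambda)}\otimes\C_{\Lambda(\lambda)},
\end{equation*}
and the Koszul-type computation of \cite{pep-RR}[Section 5.1] identifies $\indice^{K_{\Lambda(\lambda)}}(\Thom^{\Lambda(\lambda)}(\pgot(\lambda)^-))$ with $S(\pgot(\lambda)^+)$ as $K_{\Lambda(\lambda)}$-modules (the required strict positivity of the weights of $\pgot(\lambda)^+$ against $\Lambda(\lambda)$ being exactly Condition (\ref{condition-lambda})). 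Combined with the commutative diagram (\ref{diagram}), this gives
\begin{equation*}
\indice^K(\sigma_{\Lambda(\lambda)}) \,=\, \indl\bigl(S(\pgot(\lambda)^+)\otimes\wedge^\bullet_\C\kgot/\kgot_{\Lambda(\lambda)}\otimes\C_{\Lambda(\lambda)}\bigr) \quad\text{in }\Rforc(K).
\end{equation*}

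The main obstacle is identifying this induced element with $\Hcal_\lambda|_K$. I would carry out the comparison on the level of $T$-characters. Applying the Atiyah--Bott Lefschetz formula to the holomorphic Euler characteristic on $K/K_{\Lambda(\lambda)}$ with the (infinite-dimensional but trace-class) fibre $S(\pgot(\lambda)^+)\otimes\C_{\Lambda(\lambda)}$, the fixed-point sum over the coset representatives $w\in W_K/W_{K_{\Lambda(\lambda)}}$ simplifies---using Condition (\ref{condition-lambda}) once more to identify the correct positive non-compact roots in the denominator---to the Harish-Chandra / Hecht--Schmid formula for the $K$-character of $\Hcal_\lambda$, equivalently the generating function for Blattner's multiplicity formula. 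This character-level matching is the delicate step and closes the proof of Proposition \ref{prop-indice-sigma-general}, and thereby of Theorem \ref{theo:qfor-lambda-lambda}.
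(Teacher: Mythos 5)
Your proposal follows essentially the same route as the paper: deform $\sigma_{\Lambda(\lambda)}$ to a product-type symbol near $K\cdot\Lambda(\lambda)$, apply Atiyah's induction to reduce to the pushed Thom symbol of $(\pgot,-J_\lambda)$ whose index is $S(\pgot(\lambda)^+)$, and then match the induced character $\indl\bigl(S(\pgot(\lambda)^+)\otimes\wedge^\bullet_\C\kgot/\kgot_{\Lambda(\lambda)}\otimes\C_{\Lambda(\lambda)}\bigr)$ with $\Hcal_\lambda\vert_K$ via the Blattner/Hecht--Schmid formula. The only cosmetic differences are that the paper works on the associated bundle $K\times_{K_{\Lambda(\lambda)}}\pgot$ rather than the product $K\cdot\Lambda(\lambda)\times\pgot$ (these are $K$-equivariantly identified, precisely because $J_\lambda$ is only $K_{\Lambda(\lambda)}$-invariant), reduces the induction to the torus $T$ before quoting the index computation, and for the final character identification simply cites the equivalence, established in \cite{pep-ENS}, between Blattner's formula and the holomorphic-induction identity that your Lefschetz fixed-point computation would reprove.
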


%%%%%%%%%%%%%%%%%%%%%%%%%%%%%%%%%%%%%%%%%%%%%%%%%%%%
\subsection{Proof of Proposition \ref{prop-indice-sigma-general} }
%%%%%%%%%%%%%%%%%%%%%%%%%%%%%%%%%%%%%%%%%%%%%%%%%%%%
The proof is an adaptation to the proof of Proposition \ref{prop-indice-sigma-Lambda}. Here we consider the 
the $K$-invariant diffeomorphism
\begin{equation}
\what{\Upsilon}: \Ocal_{\Lambda(\lambda)}\longrightarrow \what{\Ocal}_{\Lambda(\lambda)}:=K\times_{K_{\Lambda(\lambda)}} \pgot, 
\end{equation}
defined by $\what{\Upsilon}(ke^X \cdot\Lambda(\lambda)):=[k,X]$.

The data $(J_{\Lambda(\lambda)},\Lcal_{\Lambda(\lambda)},\Hcal,\sigma_{\Lambda(\lambda)})$, transported to the 
manifold $\what{\Ocal}_{\Lambda(\lambda)}$ through $\what{\Upsilon}$, is denoted 
$(\what{J}_{\Lambda(\lambda)},\what{\Lcal}_{\Lambda(\lambda)},\what{\Hcal},\what{\sigma}_{\Lambda(\lambda)})$. 
The line bundle $\what{\Lcal}_{\Lambda(\lambda)}$ is the pull-back of the line bundle 
$K\times_{K_{\Lambda(\lambda)}}\C_{\Lambda(\lambda)}\to K\cdot{\Lambda(\lambda)}$ 
to $\what{\Ocal}_{\Lambda(\lambda)}$.

The tangent bundle $\T\what{\Ocal}_{\Lambda(\lambda)}$ is $K$-equivariantly isomorphic to 
$K\times_{K_{\Lambda(\lambda)}} (\rgot_\lambda\oplus\T\pgot)$ where $\rgot_\lambda:=[\kgot,\Lambda(\lambda)]$ is the 
$K_{\Lambda(\lambda)}$-invariant complement of $\kgot_{\Lambda(\lambda)}$.

Let $J_\lambda$ be the (linear) complex structure on the vector space $\pgot$ such that 
$(\pgot,J_\lambda)\simeq \pgot(\lambda)^+$. Note that $J_\lambda$ is $K_{\Lambda(\lambda)}$-invariant since 
$\lambda$ satisfies condition (\ref{condition-lambda}).

Let $J_{K\cdot\Lambda(\lambda)}\vert_e$ be the (linear) $K_{\Lambda(\lambda)}$-invariant complex structure 
on the vector space $\rgot_\lambda$ defined by the K\"ahler structure $J_{K\cdot\Lambda(\lambda)}$ on the coadjoint orbit 
$K\cdot\Lambda(\lambda)$.

We consider on $\what{\Ocal}_{\Lambda(\lambda)}$ the 
following $K$-equivariant data:
\begin{enumerate}
\item The almost complex structure $\what{J}'_\Lambda$ such that 
$$
\what{J}'_\Lambda\vert_{(e,v)}=J_{K\cdot\Lambda(\lambda)}\vert_e\times -J_\lambda\quad 
{\rm for\ every}\quad v\in\pgot.
$$

\item The vector field $\what{\Hcal}'$ defined by: $\what{\Hcal}'_{[k, v]}=-\left(0,k\cdot [\Lambda(\lambda),v]\right)$ 
for $[k,v]\in\what{\Ocal}_{\Lambda(\lambda)}$.
\end{enumerate}

\begin{defi}\label{defi-push-general}
We consider on $\what{\Ocal}_{\Lambda(\lambda)}$ the symbols:
\begin{itemize} 
\item $\what{\tau}'_{\Lambda(\lambda)}:=\Thom(\what{\Ocal}_{\Lambda(\lambda)},\what{J}'_\Lambda)\otimes 
\what{\Lcal}_{\Lambda(\lambda)}$,

\item $\what{\sigma}'_{\Lambda(\lambda)}$ which is the symbol $\what{\tau}'_{\Lambda(\lambda)}$ 
pushed by the vector field $\what{\Hcal}'$ (see Def. \ref{def:sigma-Lambda}). 
\end{itemize}
\end{defi}

\begin{prop}\label{prop:sigma-sigma-gene}
$\bullet$ The symbol $\what{\sigma}'_{\Lambda(\lambda)}$ is a $K$-transversaly elliptic symbol on 
$\what{\Ocal}_{\Lambda(\lambda)}$.

$\bullet$ If $\Ucal$ is a sufficiently small $K$-invariant neighborhood  of $K\times_{K_{\Lambda(\lambda)}} \{0\}$ in 
$\what{\Ocal}_{\Lambda(\lambda)}$, the restrictions $\what{\sigma}_{\Lambda(\lambda)}\vert_\Ucal$ and 
$\what{\sigma}'_{\Lambda(\lambda)}\vert_\Ucal$ define the same class in $\K_K(\T_K\Ucal)$.
\end{prop}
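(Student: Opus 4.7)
The argument follows the template of the proof of Proposition \ref{prop:sigma-sigma}, adapted to accommodate the twisted product structure of $\what{\Ocal}_{\Lambda(\lambda)}=K\times_{K_{\Lambda(\lambda)}}\pgot$ and the fact that $\what{J}_{\Lambda(\lambda)}$ is only an almost complex structure.

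For the first point, the characteristic set of $\what{\tau}'_{\Lambda(\lambda)}$ is the zero section of $\T\what{\Ocal}_{\Lambda(\lambda)}$, so
$$
\Char(\what{\sigma}'_{\Lambda(\lambda)})\cap\T_K\what{\Ocal}_{\Lambda(\lambda)}\simeq\{[k,v]\ :\ \what{\Hcal}'_{[k,v]}=0\}\simeq\{[k,v]\ :\ [\Lambda(\lambda),v]=0\},
$$
using that $\what{\Hcal}'$ is tangent to $K$-orbits (exactly as $\Hcal''$ was in Proposition \ref{prop:sigma-sigma}). Condition (\ref{condition-lambda}) keeps $\Lambda(\lambda)$ off every non-compact wall, so $(\beta,\Lambda(\lambda))\neq 0$ for every $\beta\in\Rgot_n$; hence $\ad(\Lambda(\lambda))$ acts invertibly on $\pgot\otimes\C$ and $v\mapsto[\Lambda(\lambda),v]$ is injective on $\pgot$. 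The characteristic set therefore reduces to the compact submanifold $K\times_{K_{\Lambda(\lambda)}}\{0\}$.

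For the second point I use a two-stage homotopy. Stage A is a deformation of the vector field: I set $\what{\Hcal}_t:=(1-t)\what{\Hcal}+t\what{\Hcal}'$ and let $\sigma_t$ be $\what{\tau}_{\Lambda(\lambda)}$ pushed by $\what{\Hcal}_t$. The key step is to establish, on a $K$-invariant neighborhood of the zero section, the analogue of (\ref{eq:H-prime}):
$$
\what{\Hcal}_{[k,v]}=\what{\Hcal}'_{[k,v]}+o(\|v\|^2),\qquad \|\what{\Hcal}'_{[k,v]}\|^2\geq c\,\|v\|^2.
$$
The lower bound follows from the injectivity of $\ad(\Lambda(\lambda))$ on $\pgot$ just established, together with compactness of $K\cdot\Lambda(\lambda)$. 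The first estimate comes from Taylor expanding the Hamiltonian vector field of $-\tfrac{1}{2}\|\Phi_K\|^2$ in the Cartan chart $\what{\Upsilon}^{-1}([k,v])=ke^{v}\cdot\Lambda(\lambda)$: the first-order transverse part of $\what{\Hcal}$ along the critical manifold $K\times_{K_{\Lambda(\lambda)}}\{0\}$ is precisely $\what{\Hcal}'$. These two estimates confine $\Char(\sigma_t)\cap\T_K\what{\Ocal}_{\Lambda(\lambda)}$ to a fixed neighborhood of the zero section for every $t\in[0,1]$, so restricting to a smaller $K$-invariant neighborhood $\Ucal$ shows that $\what{\sigma}_{\Lambda(\lambda)}|_\Ucal=\sigma_0|_\Ucal$ and $\sigma_1|_\Ucal$ define the same class in $\K_K(\T_K\Ucal)$.

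Stage B deforms the almost complex structure. A direct check shows that $\what{J}_{\Lambda(\lambda)}$ and $\what{J}'_\Lambda$ coincide on the zero section: reading off $\T^{1,0}\Ocal_{\Lambda(\lambda)}$ at $\Lambda(\lambda)$, the $\rgot_\lambda$ component recovers $J_{K\cdot\Lambda(\lambda)}|_e$, while the $\pgot$ component recovers $-J_\lambda$ because $\pgot(\lambda)^-$ is by definition the $-i$-eigenspace of $J_\lambda$. Hence $A_u:=\mathrm{Id}-u\,\what{J}_{\Lambda(\lambda)}\what{J}'_\Lambda$ equals $(1+u)\,\mathrm{Id}$ on the zero section and is invertible on a $K$-invariant neighborhood $\Ucal$ for every $u\in[0,1]$. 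The resulting complex bundle isomorphism then yields, exactly as in the proof of Proposition \ref{prop:sigma-sigma}, an isomorphism of transversally elliptic symbols $\sigma_1|_\Ucal\simeq \what{\sigma}'_{\Lambda(\lambda)}|_\Ucal$ in $\K_K(\T_K\Ucal)$. Composing Stages A and B proves the second point.

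The main obstacle is the first-order analysis in Stage A. In the holomorphic setting of Proposition \ref{prop:sigma-sigma} the coadjoint orbit splits as a direct product and the relevant estimates were manifestly computable; here the twisted product forces one to expand the moment map in exponential coordinates and identify the quadratic correction term. Once that expansion is secured, the rest is a bookkeeping transposition of the argument given in the holomorphic case.
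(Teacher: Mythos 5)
Your proof is correct and follows exactly the route the paper intends: the paper's own proof of this proposition is the single sentence ``The proof works as the proof of Proposition \ref{prop:sigma-sigma}'', and your two-stage homotopy (vector field, then almost complex structure) is precisely that transposition, with the right new ingredients supplied --- the injectivity of $\ad(\Lambda(\lambda))$ on $\pgot$ coming from condition (\ref{condition-lambda}), and the coincidence of $\what{J}_{\Lambda(\lambda)}$ and $\what{J}'_\Lambda$ along the zero section. Nothing to add.
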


\begin{proof} 
The proof works as the proof of Proposition \ref{prop:sigma-sigma}.
\end{proof}

\medskip

Proposition \ref{prop:sigma-sigma-gene} shows that
$\indice^K(\sigma_{\Lambda(\lambda)})=\indice^K(\what{\sigma}_{\Lambda(\lambda)})=\indice^K(\what{\sigma}'_{\Lambda(\lambda)})$. In order to compute 
$\indice^K(\what{\sigma}_{\Lambda(\lambda)}')$, we use the induction
morphism 
$$
i_*:\K_{K_\Lambda}(\T_{K_\Lambda}\pgot)\longrightarrow\K_{K}(\T_{K}(\what{\Ocal}_{\Lambda(\lambda)}))
$$ 
defined by Atiyah in \cite{Atiyah74} (see also \cite{pep-RR}[Section 3]). Here
$i_*$ differs from the induction morphism $j_*$ used in Section \ref{sec:Q-direct}, by the 
isomorphism 
$$
\K_{K}(\T_{K}(\what{\Ocal}_{\Lambda(\lambda)}))\simeq \K_{K}(\T_{K}(K\!\cdot\!\Lambda(\lambda)\times\pgot))
$$
induced by the $K$-diffeomorphism $\what{\Ocal}_{\Lambda(\lambda)}\simeq K\!\cdot\!\Lambda(\lambda)\times\pgot$, 
$[k,X]\mapsto (k\cdot\Lambda(\lambda), k\cdot X)$.

Let $\Thom(\pgot(\lambda)^-)$ be the $K_\Lambda(\lambda)$-equivariant Thom symbol of the complex vector space $\pgot(\lambda)^-\simeq (\pgot,-J_\lambda)$. 
Let $\widetilde{\Lambda(\lambda)}$ be the vector field on $\pgot$ which is generated by $\Lambda(\lambda)\in\kgot^*\simeq\kgot$. 
Let 
$$
\Thom^{\Lambda(\lambda)}(\pgot(\lambda)^-)
$$ 
be the symbol $\Thom(\pgot(\lambda)^-)$ pushed by the 
vector field $\widetilde{\Lambda(\lambda)}$ (see Definition \ref{defi-push}). 

Since $\Lambda(\lambda)$ does not belongs to the non-compact walls (see condition (\ref{condition-lambda})), the vector field 
$\widetilde{\Lambda(\lambda)}$ vanishes only at $0\in \pgot$: hence the symbol 
$\Thom^{\Lambda(\lambda)}(\pgot(\lambda)^-)$ is $K_{\Lambda(\lambda)}$-transversaly elliptic. 

One checks easily that
\begin{equation}\label{eq:j-sigma-Lambda-gene}
(i_*)^{-1}(\what{\sigma}'_{\Lambda(\lambda)})=\Thom^{\Lambda(\lambda)}(\pgot(\lambda)^-)\, 
\otimes\wedge^{\bullet}_\C\kgot/\kgot_{\Lambda(\lambda)}\otimes\C_{\Lambda(\lambda)}.
\end{equation}

Let  $\indl : \fgene(K_{\Lambda(\lambda)})^{K_{\Lambda(\lambda)}}\longrightarrow \fgene(K)^{K}$ be the 
induction map introduced in (\ref{eq.induction}). Equality (\ref{eq:j-sigma-Lambda-gene}) 
and the commutative diagram (\ref{diagram}) give 
\begin{eqnarray*}
\indice^K(\what{\sigma}'_{\Lambda(\lambda)})&=&\indl\left(
\indice^{K_{\Lambda(\lambda)}}\Big(\Thom^{\Lambda(\lambda)}(\pgot(\lambda)^-)\Big) 
\otimes\wedge^{\bullet}_\C\kgot/\kgot_{\Lambda(\lambda)}\otimes\C_{\Lambda(\lambda)}\right)\\
&=&\indT\left(
\indice^{T}\Big(\Thom^{\Lambda(\lambda)}(\pgot(\lambda)^-)\Big) 
\otimes\wedge^{\bullet}_\C\kgot/\tgot\otimes\C_{\Lambda(\lambda)}\right)
\end{eqnarray*}

In the last equality, we use two facts (see \cite{pep-RR})  :

$\bullet$ Since the symbol $\Thom^{\Lambda(\lambda)}(\pgot(\lambda)^-)$ is $T$-transversally elliptic, 
the index \break   
$\indice^{K_{\Lambda(\lambda)}}\left(\Thom^{\Lambda(\lambda)}(\pgot(\lambda)^-)\right)$ is $T$-admissible, and 
its restriction to $T$ is equal to  $\indice^{T}\left(\Thom^{\Lambda(\lambda)}(\pgot(\lambda)^-)\right)$.

$\bullet$ For any 
$K_{\Lambda(\lambda)}$-module $E$ which is $T$-admissible we have 
$$
\indl\left( E\otimes\wedge^{\bullet}_\C\kgot/\kgot_{\Lambda(\lambda)}\right)=
\indT\left( E\vert_T\otimes\wedge^{\bullet}_\C\kgot/\tgot\right).
$$

We know from \cite{pep-RR}[Section 5.1] that the $T$-index of 
$\Thom^{\Lambda(\lambda)}(\pgot(\lambda)^-)$ is equal 
to the symmetric algebra $S(\pgot(\lambda)^+)$ viewed as a $T$-module. Here we use in a crucial 
way Condition (\ref{condition-lambda}): for every weight $\beta$ relative to the $T$-action on the 
complex vector spaces $\pgot(\lambda)^-$, we have $(\beta,\Lambda(\lambda))<0$. 
The $T$-module $S(\pgot(\lambda)^+)$ is denoted 
$$
\Big[\prod_{\beta\in\Rgot^{+,\lambda}_n}(1-t^\beta)\Big]^{-1}_\lambda\ \in\ \Rforc(T).
$$
in \cite{pep-ENS}. So we have proved that   
\begin{equation}\label{eq:1}
\indice^K(\sigma_{\Lambda(\lambda)})= \indT\left(\Big[\prod_{\beta\in\Rgot^{+,\lambda}_n}(1-t^\beta)\Big]^{-1}_\lambda
\otimes\C_{\Lambda(\lambda)}\otimes\wedge^{\bullet}_\C\kgot/\tgot\right).
\end{equation}

We have proved in \cite{pep-ENS}[Section 5.1] that the Blattner formulas \cite{Hecht-Schmid} 
which computes the $K$-multiplicities of the discrete series representation $\Hcal_\lambda$ are equivalent 
to the following relation
\begin{equation}\label{eq:2}
\Hcal_\lambda\vert_K=\HolT\left(\Big[\prod_{\beta\in\Rgot^{+,\lambda}_n}(1-t^\beta)\Big]^{-1}_\lambda
\otimes\C_{\Lambda(\lambda)}\right)\quad {\rm in}\quad \Rforc(K), 
\end{equation}
where the ``holomorphic'' induction map $\HolT$ is equal to $\indT(- \otimes\wedge^{\bullet}_\C\kgot/\tgot)$.

We see that (\ref{eq:1}) and (\ref{eq:2}) complete the proof of Proposition \ref{prop-indice-sigma-general}.

%%%%%%%%%%%%%%%%%%%%%%%%%%%%%%%%%%%%%%%%%%%%%%%%%%%%%
\subsection{Examples}\label{section:condition-lambda}
%%%%%%%%%%%%%%%%%%%%%%%%%%%%%%%%%%%%%%%%%%%%%%%%%%%%%

%%%%%%%%%%%%%%%%%%%%%%%%%%%%%%%%%%%%%%%%%%%%%%%%%%
\subsubsection{The case of $\ {\rm Sp}(2,\R)$}
%%%%%%%%%%%%%%%%%%%%%%%%%%%%%%%%%%%%%%%%%%%%%%%%%%

We examined this case in  Example (\ref{ex:sp(2)}). Let $\theta_1,\theta_2$ be the $\Z$-basis of of the 
lattice $\wedge^*$. The set of compact roots is $\Rgot_c=\{\pm(\theta_1-\theta_2)\}$, and the set of 
non-compact roots is $\Rgot_n=\{\pm(\theta_1+\theta_2),\pm 2\theta_1,\pm 2\theta_2\}$. We 
choose $\theta_1-\theta_2$ as the positive compact root, hence $\tgot^*_+=\{\theta_1\geq\theta_2\}$.

The set of strongly elliptic elements in the Weyl chamber $\tgot^*_+$ has four chambers (see Figure 
(\ref{figure-SP4-2})): 
$\Ccal_1=\{\theta_1\geq\theta_2> 0 \}$, 
$\Ccal_2=\{\theta_1>-\theta_2> 0 \}$, $\Ccal_3=\{-\theta_2>\theta_1 >0 \}$, and 
$\Ccal_4=\{-\theta_2\geq-\theta_1> 0 \}$.

\begin{figure}[h]\label{figure-SP4-2}
\centerline{
\scalebox{0.3}{\input{SP4.2.pstex_t}}
}
\caption{Chambers for  ${\rm Sp}(2,\R)$}
\end{figure}

For $\lambda\in\tgot^*_+$ which is regular, the term  $\rho_n(\lambda)$ only depends of the chamber $\Ccal_i$ where 
$\lambda$ stands: let us denoted it $\rho_n(\Ccal_i)$. 

We check that $-\rho_c+\rho_n(\Ccal_i)\in \overline{\Ccal_i}$ for $i=2,3$. Hence, for $i=2,3$ 
and any Harish-Chandra parameter $\lambda\in \Ccal_i$, we have 
$\Lambda(\lambda)=\lambda-\rho_c+\rho_n(\Ccal_i)\in\Ccal_i$.

We know already that any regular weight of the holomorphic chamber $\Ccal_1$ satisfies 
condition (\ref{condition-lambda}). It is also the case for the {\em anti-holomorphic} 
chamber $\Ccal_4$.

Finally we see that condition (\ref{condition-lambda}) holds for any Harish-Chandra parameter of 
a discrete series of ${\rm Sp}(2,\R)$.

%%%%%%%%%%%%%%%%%%%%%%%%%%%%%%%%%%%%%%%%%%%%%%%%%%%%%%%%%%%%%%%%%
\subsubsection{The case of $\ {\rm Sp}(4,\R)$}
%%%%%%%%%%%%%%%%%%%%%%%%%%%%%%%%%%%%%%%%%%%%%%%%%%%%%%%%%%%%%%%%%

Let $\theta_1,\cdots,\theta_4$ be the canonical basis of $\R^4\simeq\tgot^*$. The 
compact positive roots are $\theta_i-\theta_j, 1\leq i<j\leq 4$, so that the corresponding 
Weyl chamber is $\tgot^*_+:=\{\lambda_1\geq\lambda_2\geq\lambda_3\geq\lambda_4\}$, and 
$\rho_c=\frac{1}{2}(3,1,-1,-3)$. The set of  non-compact roots is 
$\{2\theta_i\}\cup\{\theta_i+\theta_j, i<j\}$.

We consider the chamber $\Ccal:=\{\lambda_1 \geq \lambda_2 > -\lambda_4 > \lambda_3>0\}$ of the 
Weyl chamber $\tgot^*_+$. We have $\rho_n(\Ccal)=\frac{1}{2}(5,5,3,-1)$ and then
$$
-\rho_c+\rho_n(\Ccal)=(1,2,2,1).
$$
We check that $\lambda=(5,3,1,-2)$ is a Harish-Chandra parameter belonging to $\Ccal$, but 
$\Lambda(\lambda)=(6,5,3,-1)$ does not belong to $\overline{\Ccal}$.

%%%%%%%%%%%%%%%%%%%%%%%%%%%%%%%%%%%%%%%%%%%%%%%%%%%%%%%%%%%%%%%%%
\subsubsection{The case of $\ {\rm SU}(3,2)$}
%%%%%%%%%%%%%%%%%%%%%%%%%%%%%%%%%%%%%%%%%%%%%%%%%%%%%%%%%%%%%%%%%

Let $T$ be the torus of $\ {\rm SU}(3,2)$ formed by all the diagonal matrices. The dual of Lie algebra of $T$ 
is $\tgot^*=\{(\lambda_1,\cdots,\lambda_5)\in\R^5\ \vert\ \sum_i\lambda_i=0\}$. Let $e_1,\cdots, e_5$ be the 
canonical basis of $\R^5$. The choice of positive compact roots $\Rgot^+_c$ is 
$\{e_1-e_2,e_1-e_3,e_2-e_3,e_4-e_5\}$ so that the Weyl chamber is 
$$
\tgot^*_+:=\Big\{ \lambda_1\geq\lambda_2\geq \lambda_3\ {\rm and}\ \lambda_4\geq\lambda_5\Big\}.
$$
We have $\rho_c=(1,0,-1,\frac{1}{2},-\frac{1}{2})$. The non-compact roots are $\pm(e_i - e_j),\ i=1,2,3,\ j=4,5$. 

 $\bullet$ Let $\lambda=(3,1,-1,0,-3)$ be in the chamber 
$\Ccal_1:=\{\lambda_1\geq\lambda_2> \lambda_4 > \lambda_3>\lambda_5\}$.
We have $\rho_n(\Ccal_1)=(1,1,0,-\frac{1}{2},-\frac{3}{2})$, and then 
$$
\Lambda(\lambda)=\lambda-\rho_c+\rho_n(\Ccal_1)=(3,2,0,-1,-4)
$$
is a regular element which does not belong to $\Ccal_1$.

$\bullet$ Let us consider the chamber $\Ccal_2:=\{\lambda_1>\lambda_4> \lambda_2 >\lambda_5>\lambda_3\}$. 
We see that $\rho_n(\Ccal_2)=\rho_c$, hence any Harish-Chandra parameter of the chamber $\Ccal_2$ satisfies 
condition (\ref{condition-lambda}).

%%%%%%%%%%%%%%%%%%%%%%%%%%%%%%%%%%%%%%%%%%%%%%%%%%%%%%%%%%%%%
%%%%%%%%%%%%%%%%%%%%%%%%%%%%%%%%%%%%%%%%%%%%%%%%%%%%%%%%%%%%%
\section{Appendices}\label{sec:appendix}
%%%%%%%%%%%%%%%%%%%%%%%%%%%%%%%%%%%%%%%%%%%%%%%%%%%%%%%%%%%%%
%%%%%%%%%%%%%%%%%%%%%%%%%%%%%%%%%%%%%%%%%%%%%%%%%%%%%%%%%%%%%

Let $G$ be a connected real semi-simple Lie group with finite center. Let $K$ be a maximal compact 
Lie subgroup of $G$. Let $T$ be a maximal torus in $K$. Let $\tgot,\kgot,\ggot$ be the 
respective Lie algebras of $T,K,G$. 
We assume that $\tgot$ is a Cartan subalgebra of $\ggot$. 

In Appendix A, we use the identification $X\mapsto B_\ggot(X,-), \ggot\stackrel{\sim}{\longrightarrow}\ggot^*$ 
given by the Killing form. Hence the {\em coadjoint} orbits of $G$ considered in the previous sections will 
be replaced by {\em adjoint} orbits. 

%%%%%%%%%%%%%%%%%%%%%%%%%%%%%%%%%%%%%%%%%%%%%%%%%%%%%%%%%%%%%
\subsection{Appendix A}\label{sec:appendix-A}
%%%%%%%%%%%%%%%%%%%%%%%%%%%%%%%%%%%%%%%%%%%%%%%%%%%%%%%%%%%%%

Let $\Ocal= G\cdot \lambda$ be an adjoint orbit of $G$ passing through 
$\lambda\in\tgot$. Let $K\cdot\mu$ be an adjoint orbit of $K$ passing 
through $\mu\in\tgot$. We consider the maps
$$
\Phi_t: \Ocal\times K\cdot\mu\longrightarrow \kgot, \quad t\in [0,1]
$$
defined by $\Phi_t(m,\xi)=\pi_\kgot(m)-t\xi$. Here $\pi_\kgot:\ggot\to\kgot$ is 
the orthogonal projection. The maps $\Phi_t,t\in [0,1]$ generates the vector 
fields $\Hcal_t,t\in [0,1]$ on $\Ocal\times K\cdot\mu$ by 
$\Hcal_t(n)=(V\Phi_t(n))\vert_n$ for $n\in \Ocal\times K\cdot\mu$.

The aim of this section is the following 

\begin{prop}
There exists a compact subset $\Kcal$ of $\Ocal$ such that 
$$
\left\{ \Hcal_t=0\right\}\subset \Kcal\times K\cdot\mu
$$
for any $t\in [0,1]$.
\end{prop}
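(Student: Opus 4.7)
The plan is to argue by contradiction, combining the properness of $\pi_\kgot:\Ocal\to\kgot$ (Lemma \ref{lem:critical}, after Duflo-Heckman-Vergne) with an asymptotic analysis along the Cartan decomposition of $\Ocal$. Suppose no such $\Kcal$ exists; extract a sequence $(m_n,\xi_n,t_n)\in\Ocal\times K\cdot\mu\times[0,1]$ with $\Hcal_{t_n}(m_n,\xi_n)=0$ and $m_n$ leaving every compact subset of $\Ocal$, so $\|\pi_\kgot(m_n)\|\to\infty$.

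The vanishing of $\Hcal_{t_n}$ at $(m_n,\xi_n)$ means that $Y_n:=\pi_\kgot(m_n)-t_n\xi_n$ lies in the common $\kgot$-stabilizer of $m_n$ and $\xi_n$, i.e.\ $[Y_n,m_n]=[Y_n,\xi_n]=0$. Splitting $[Y_n,m_n]=0$ along $\ggot=\kgot\oplus\pgot$ and substituting the definition of $Y_n$ yields the key identity
\begin{equation*}
[\pi_\kgot(m_n),\pi_\pgot(m_n)]\;=\;t_n\,[\xi_n,\pi_\pgot(m_n)],
\end{equation*}
whose right-hand side is uniformly $O(\|\pi_\pgot(m_n)\|)$. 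Use now the Cartan diffeomorphism $\Ocal\simeq K\cdot\lambda\times\pgot$: writing $m_n=e^{X_n}(k_n\cdot\lambda)$ with $X_n\in\pgot$, $k_n\in K$, one computes $\pi_\kgot(m_n)=\cosh(\ad X_n)\lambda_n$ and $\pi_\pgot(m_n)=\sinh(\ad X_n)\lambda_n$ with $\lambda_n:=\Ad(k_n)\lambda$ bounded and $r_n:=\|X_n\|\to\infty$. Pass to subsequences so that $\tilde X_n:=X_n/r_n\to\tilde X\in\pgot$ with $\|\tilde X\|=1$, and $k_n\to k_\infty$, $\xi_n\to\xi^*$, $t_n\to t^*$.

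Denote by $\theta:\ggot\to\ggot$ the Cartan involution (equal to $+\mathrm{id}$ on $\kgot$ and $-\mathrm{id}$ on $\pgot$). Since $\tilde X\in\pgot$, $\ad\tilde X$ is self-adjoint for the positive definite form $(\cdot,\cdot):=-B_\ggot(\cdot,\theta\cdot)$, with real spectrum symmetric about $0$, the $\pm\alpha$-eigenspaces being interchanged by $\theta$. Let $\alpha_{\max}\geq 0$ be the largest eigenvalue $\alpha$ of $\ad\tilde X$ for which the projection $L:=\lambda_\infty^{\alpha}$ of $\lambda_\infty:=\Ad(k_\infty)\lambda$ is nonzero; note $\theta L=\lambda_\infty^{-\alpha_{\max}}$. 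If $\alpha_{\max}>0$, leading exponential asymptotics give
$$
[\pi_\kgot(m_n),\pi_\pgot(m_n)]\;\sim\;-\tfrac12 e^{2r_n\alpha_{\max}}\,[L,\theta L],
$$
whereas the right-hand side of the key identity is only $O(e^{r_n\alpha_{\max}})$; matching dominant orders forces $[L,\theta L]=0$. But invariance of $B_\ggot$ together with $[\tilde X,\theta L]=-\alpha_{\max}\theta L$ give
$$
B_\ggot([L,\theta L],\tilde X)\;=\;B_\ggot(L,[\theta L,\tilde X])\;=\;\alpha_{\max}B_\ggot(L,\theta L)\;=\;-\alpha_{\max}(L,L)\;\neq\;0,
$$
a contradiction. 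If instead $\alpha_{\max}=0$, then $[\tilde X,\lambda_\infty]=0$, so $\tilde X\in\ggot_{\lambda_\infty}=\Ad(k_\infty)\kgot_\lambda$ by strong ellipticity of $\lambda$; since $\Ad(k_\infty)\kgot_\lambda\subset\kgot$ and $\tilde X\in\pgot$, we conclude $\tilde X\in\pgot\cap\kgot=\{0\}$, contradicting $\|\tilde X\|=1$.

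The main technical obstacle lies in the asymptotic analysis in the case $\alpha_{\max}>0$: the eigenspaces of $\ad\tilde X_n$ vary with $n$, so extracting the leading exponential behavior of $\pi_\kgot(m_n)$ and $\pi_\pgot(m_n)$ requires justifying the continuity of the relevant spectral projections under $\ad\tilde X_n\to\ad\tilde X$. This follows from standard perturbation theory for norm-convergent self-adjoint operators in finite dimension; a cleaner presentation would first $K$-conjugate the sequence so that every $\tilde X_n$ lies in a fixed closed Weyl chamber of a maximal abelian subspace $\agot\subset\pgot$, thereby reducing the spectral data to the finite restricted root system $\Sigma$.
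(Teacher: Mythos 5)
Your argument is a genuinely different route from the paper's. The paper's proof in Appendix A is non-asymptotic: after reducing to $\xi=\mu$, the condition $[\pi_\kgot(m),\mu]=0$ forces $\pi_\kgot(m)\in K_\mu\cdot\tgot$, and the condition $m\in\ggot_{\pi_\kgot(m)-t\mu}$ is exploited through the \emph{finiteness} of the family of centralizers $\ggot_X$, $X\in\tgot$, together with the fact that $\Ocal\cap\ggot_i$ is a finite union of $G_i$-orbits on each of which the projection onto $z(\ggot_i)$ is constant; this pins $\pi_\kgot(m)$ in a fixed compact set, and properness of $\pi_\kgot\vert_\Ocal$ concludes. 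Your reduction to the identity $[\pi_\kgot(m_n),\pi_\pgot(m_n)]=t_n[\xi_n,\pi_\pgot(m_n)]$ is correct (the $\kgot$- and $\pgot$-components of $[Y_n,m_n]$ vanish separately), as are the formulas $\pi_\kgot(m_n)=\cosh(\ad X_n)\lambda_n$, $\pi_\pgot(m_n)=\sinh(\ad X_n)\lambda_n$, the computation $B_\ggot([L,\theta L],\tilde X)=-\alpha_{\max}(L,L)$, and the treatment of the case $\alpha_{\max}=0$ (which uses strong ellipticity of $\lambda$, on the same footing as the paper's use of properness of $\pi_\kgot\vert_\Ocal$).

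There is, however, a genuine gap in the case $\alpha_{\max}>0$, and it is not the obstacle you flag. Even after conjugating every $X_n$ into a fixed chamber of $\agot$, so that the restricted root spaces are fixed and no perturbation theory is needed, your exponent $\alpha_{\max}$ is defined from the components of the \emph{limit} $\lambda_\infty={\rm Ad}(k_\infty)\lambda$. A restricted root $\alpha$ with $\lambda_\infty^{(\alpha)}=0$ but $\lambda_n^{(\alpha)}\neq 0$ and $|\langle\alpha,\tilde X\rangle|>\alpha_{\max}$ contributes to $\pi_\pgot(m_n)$ (and to the bracket on the left-hand side) terms of size $e^{r_n(|\langle\alpha,\tilde X\rangle|+o(1))}\,\|\lambda_n^{(\alpha)}\|$, where $\|\lambda_n^{(\alpha)}\|\to 0$ at a rate over which you have no control; for a slowly decaying rate such a term exceeds $e^{2r_n\alpha_{\max}}$. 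Hence neither the claimed equivalent $-\tfrac12 e^{2r_n\alpha_{\max}}[L,\theta L]$ for the left-hand side nor the bound $O(e^{r_n\alpha_{\max}})$ for the right-hand side is justified, and the step ``matching dominant orders forces $[L,\theta L]=0$'' breaks down. The difficulty is not continuity of spectral projections but the possible presence of asymptotically vanishing components attached to larger exponents, and ``standard perturbation theory'' does not address it. The approach can likely be salvaged (for instance by selecting the dominant exponent along the sequence rather than at the limit, or by pairing the key identity with $X_n$ under $B_\ggot$, which makes the left-hand side the manifestly nonnegative sum $\tfrac{r_n}{2}\sum_c c\sinh(2r_nc)\|\lambda_n^{(c)}\|^2$), but this requires a more careful term-by-term comparison than what you have written; the paper's finiteness argument sidesteps all of these asymptotics.
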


\begin{proof} The proof is given in \cite{pep-ENS}[Section 5.3] in the case where 
$\lambda$ is a regular element of $\ggot$.
Here we propose another proof, which is technically simpler, that 
was communicated to us by Mich\`ele Vergne. 

By definition, we have 
$$
\Hcal_t(m,\xi)=-\Big([\pi_\kgot(m)-t\xi, m],  [\pi_\kgot(m),\xi ]\Big)\quad\in \ 
\T_n\Ocal\times \T_\xi(K\cdot\mu)
$$
Let us denote $\Ccal_t$ the subset $\left\{ \Hcal_t=0\right\}$. We have 
\begin{eqnarray*}
\Ccal_t &=&\left\{(m,\xi)\in \Ocal\times K\cdot\mu\ \mid \ [\pi_\kgot(m)-t\xi, m]=0\ 
\mathrm{and}\   [\pi_\kgot(m),\xi ]=0 \right\} \\
&=& K\cdot\left\{(m,\mu)\in \Ocal\times K\cdot\mu\ \mid \ [\pi_\kgot(m)-t\mu, m]=0\ 
\mathrm{and}\   [\pi_\kgot(m),\mu]=0\right\}.
\end{eqnarray*}
The condition $[\pi_\kgot(m),\mu]=0$ means that $\pi_\kgot(m)$ belongs to the 
subalgebra $\kgot_\mu$ that stabilizes $\mu\in\tgot$. 
We have $\kgot_\mu=K_\mu\cdot\tgot$, hence $\Ccal_t \subset  K\cdot \Dcal_t \times K\cdot\mu$
where 
$$
\Dcal_t=
\left\{m \in\Ocal \ \mid \ \pi_\kgot(m)\in \tgot \ \mathrm{and}\  
m\in\ggot_{\pi_\kgot(m)-t\mu}  \right\}.
$$
Here $\ggot_{\pi_\kgot(m)-t\mu}$ is the subalgebra that stabilizes $\pi_\kgot(m)-t\mu$. 
The proof will be settled if one proves that 
$\cup_{t\in[0,1]}\Dcal_t$ is contained in a compact subset of $\Ocal$.

The subalgebras $\ggot_X, X\in\tgot$ describe a finite subset that we enumerate 
$\ggot_i, i=1,\ldots, r$. For each subalgebra $\ggot_i$, 
let $G_i$ be the corresponding closed connected subgroup of $G$. Note that $\tgot$ 
is contained in each $\ggot_i$, and that the center 
$z(\ggot_i)$ of $\ggot_i$ is contained in (the Cartan subalgebra) $\tgot$. Note 
that the condition $\ggot_{\pi_\kgot(m)-t\mu}=\ggot_i$ implies that 
$\pi_\kgot(m)-t\mu\in z(\ggot_i)$. It gives that $\Dcal_t\subset 
\cup_{i=1}^r \Dcal_t^i$ with
$$
\Dcal_t^i=
\left\{m \in\Ocal\cap \ggot_i \ \mid \ \pi_\kgot(m)-t\mu\in z(\ggot_i)  \right\}.
$$
It is a classical result that the intersection $\Ocal\cap \ggot_i$ is equal to a 
{\em finite} collection of adjoint $G_i$ orbit:
$$
\Ocal\cap \ggot_i=\bigcup_{\alpha\in A_i} G_i\cdot \alpha.
$$
Let $\pi_i:\ggot\to z(\ggot_i)$ be the orthogonal projection. If 
$\pi_\kgot(m)-t\mu\in z(\ggot_i)$, we have 
\begin{eqnarray*}
\pi_\kgot(m)-t\mu &=&\pi_i\Big(\pi_\kgot(m)-t\mu\Big)\\
&=& \pi_i(m)-t\pi_i(\mu).
\end{eqnarray*}
But the map $\pi_i$ is constant on each connected component $G_i\cdot \alpha$. So finally,
$$\Dcal_t^i=
\bigcup_{\alpha\in A_i} \left\{m \in G_i\cdot \alpha \ \mid \ \pi_\kgot(m)-t\mu
=\pi_i(\alpha)-t\pi_i(\mu). \right\}.
$$
and then
\begin{eqnarray*}
\Dcal_t^i&=&
\bigcup_{ \alpha\in A_i} G_i\cdot \alpha\cap  \pi_\kgot^{-1}\left(\theta_{i,\alpha,t}\right) \\
&\subset&\bigcup_{ \alpha\in A_i} \Ocal \cap \pi_\kgot^{-1}\left(\theta_{i,\alpha,t}\right)
\end{eqnarray*}
with $\theta_{i,\alpha,t}=\pi_i(\alpha)+t(\mu-\pi_i(\mu))$. We get finally that 
$$
\bigcup_{t\in[0,1]}\Dcal_t \subset  \Ocal \cap \pi_\kgot^{-1}(C)
$$
where
$$
C=\{\theta_{i,\alpha,t},\ t\in[0,1],\ i=1,\ldots, r, \ \alpha\in A_i\}
$$
is a compact subset of $\tgot$. Since the map $\pi_\kgot$ is {\em proper} when 
restricted to $\Ocal$, the set $\Ocal \cap \pi_\kgot^{-1}(C)$ is compact.

\end{proof}

%%%%%%%%%%%%%%%%%%%%%%%%%%%%%%%%%%%%%%%%%%%%%%%%%%%%%%%%%%%%%
\subsection{Appendix B}\label{sec:appendix-B}
%%%%%%%%%%%%%%%%%%%%%%%%%%%%%%%%%%%%%%%%%%%%%%%%%%%%%%%%%%%%%

Here we suppose that $G/K$ is an irreducible Hermitian symmetric spaces, and we use the notations
of Section \ref{sec:delta-K-p}. Our aim is the proof of Propositions \ref{prop:a-plus} and \ref{prop:appendixB-2}.
Our (classical) arguments uses the knowledge of the restricted root system $\Sigma$  and 
the Cayley transform.

We denote $(-,-)_\tgot$  the scalar product on $\tgot$ defined by :
 $(X,Y)_\tgot:=-B_\ggot(X,Y)$ for $X,Y\in\tgot$. Let $(-,-)_{\tgot^*}$ be the 
scalar product on $\tgot*$ which make the map $X\mapsto(X,-)_\tgot$, from $\tgot$ to $\tgot^*$,  
unitary.

Let $\agot=\sum_{j=1}^r \R X_j$ be the maximal abelian algebra of $\pgot$ attached to the 
maximal family $\Psi=\{\gamma_1,\ldots,\gamma_r\}$ of strongly orthogonal roots (see Section \ref{sec:delta-K-p}).

Let $\tgot_1\subset \tgot$ be the subspace orthogonal (for the duality) to the vector 
subspace spanned  by $\gamma_1,\ldots,\gamma_r$: $\tgot_1$ is also the centralizer of $\agot$ in $\tgot$. 
Let $\tgot_2\subset \tgot$ be the orthogonal of $\tgot_1$ (relatively to the 
scalar product on $\tgot$). We check easily that
$$
\tgot_2={\rm Vect}(H_1,\ldots,H_r).
$$
We have then the orthogonal decomposition $\tgot^*=\tgot_1^*\oplus\tgot_2^*$ with 
$\tgot_2^*={\rm Vect}(\gamma_1,\ldots,\gamma_r)$.

Let $\Rgot=\Rgot(\ggot_\C,\tgot_\C)$ be the roots system associated to the 
Cartan subalgebra $\tgot=\tgot_1\oplus\tgot_2$. Let $\Rgot^+_{\rm hol}=\Rgot^+_c\cup\Rgot^{+,z_o}_n$ 
be the system of positive roots consider in the introduction.  Let 
$\overline{\Chol}:=\left\{\xi\in\tgot^*\ \vert\ (\xi,\alpha)_{\tgot^*}\geq 0,\ \forall \alpha\in 
\Rgot^+_{\rm hol}\right\}$ be the corresponding Weyl chamber. 

Let $\pi':\tgot^*\to\tgot_2^*$ be the canonical 
projection, and let us consider  
$$
\Sigma':=\pi'(\Rgot)\setminus\{0\}\quad {\rm and}\quad  (\Sigma')^+:=\pi'(\Rgot^+_{\rm hol})\setminus\{0\}.
$$

We see that $\Ccal\cap {\rm Vect}(\gamma_1,\ldots,\gamma_r)=\Dcal$, with
\begin{equation}\label{eq:Dcal}
\Dcal:=\left\{\xi\in\tgot_2^*\ \vert\ (\xi,\alpha)_{\tgot^*}\geq 0,\ \forall \alpha\in (\Sigma')^+\right\}.
\end{equation}

Now we use the description of $\Sigma'$ given by Harish-Chandra and Moore.

\begin{prop}[\cite{Harish-Chandra55-56,Moore}]
$\bullet$ All the $\gamma_k$ have the same lenght.

$\bullet$ For any $i<j$, there is an $\alpha\in\Rgot^+_c$, such that $\pi'(\alpha)=\frac{1}{2}(\gamma_i-\gamma_j)$.

$\bullet$ They are two possibilities for $\Sigma':=\pi'(\Rgot)\setminus\{0\}$:
$$
\Sigma':=\Big\{\pm\hbox{$\frac{1}{2}$}(\gamma_i+\gamma_j),
\pm\hbox{$\frac{1}{2}$}(\gamma_i-\gamma_j),\ 1\leq i< j\leq r\Big\}
\cup\Big\{\pm\gamma_i, \ 1\leq i\leq r\Big\},
$$
or
$$
\Sigma':=\Big\{\pm\hbox{$\frac{1}{2}$}(\gamma_i+\gamma_j),
\pm\hbox{$\frac{1}{2}$}(\gamma_i-\gamma_j),\ 1\leq i< j\leq r\Big\}\cup
\Big\{\pm\hbox{$\frac{1}{2}$}\gamma_i, \pm\gamma_i, \ 1\leq i\leq r\Big\}.
$$
\end{prop}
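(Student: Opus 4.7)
The plan is to identify $\Sigma'=\pi'(\Rgot)\setminus\{0\}$ with the restricted root system $\Sigma(\ggot,\agot)$ of the Hermitian symmetric pair $(G,K)$ via Cayley transforms, and then to invoke the classical theorem of Moore classifying this restricted system.

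The bridge is the \emph{Cayley transform}. For each $\gamma_k\in\Psi$, one associates to the $\slgot(2,\C)$-triple $(iH_{\gamma_k},E_{\gamma_k},E_{-\gamma_k})$ a partial Cayley transform $\cay_k\in\mathrm{Aut}(\ggot_\C)$ that swaps $iH_{\gamma_k}$ with $2X_{\gamma_k}\in\agot$ while fixing $\tgot_1$ pointwise and leaving $H_{\gamma_\ell}$ invariant for $\ell\neq k$; strong orthogonality of the $\gamma_k$'s makes the relevant brackets vanish, so the $\cay_k$'s pairwise commute. Their composition $\cay:=\cay_1\cdots\cay_r$ then carries $\tgot$ to a Cartan subalgebra of $\ggot_\C$ whose real form has split part exactly $\agot$ and compact part $\tgot_1$.

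A direct computation, using $H_{\gamma_k}\simeq -2\gamma_k/\|\gamma_k\|^2$, would then show that restriction to $\agot$ of the transformed root $\cay^*(\alpha)$ coincides, through the identification $\tgot_2^*\simeq\agot^*$ sending $\tfrac{1}{2}\gamma_k$ to the dual of $X_{\gamma_k}$, with the orthogonal projection $\pi'(\alpha)$. Hence $\Sigma'$ \emph{is} the restricted root system $\Sigma(\ggot,\agot)$ under this identification. Moore's theorem \cite{Moore}, combined with the irreducibility of $(G,K)$, asserts that $\Sigma(\ggot,\agot)$ is of type $C_r$ or $BC_r$: the long roots are precisely $\{\pm\gamma_i\}$, the intermediate ones are $\{\pm\tfrac{1}{2}(\gamma_i\pm\gamma_j)\}_{i<j}$, and the short roots $\{\pm\tfrac{1}{2}\gamma_i\}$ appear exactly in the $BC_r$ case. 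Since $W_\Sigma$ acts transitively on long roots, the $\gamma_k$'s share a common length, yielding the first bullet; the $C_r$ versus $BC_r$ dichotomy yields the third. For the second bullet, any $\alpha\in\Rgot$ with $\pi'(\alpha)=\tfrac{1}{2}(\gamma_i-\gamma_j)$ satisfies $\langle\alpha,z_o\rangle=\langle\pi'(\alpha),z_o\rangle=0$ (using $\langle\gamma_k,z_o\rangle=1$ for every $k$), hence $\alpha\in\Rgot_c$; a suitable sign choice produces the desired element of $\Rgot_c^+$.

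The main obstacle will be the identification step: verifying that Cayley-transforming and restricting to $\agot$ really reproduces $\pi'$, without parasitic rescalings. This requires careful but finite bookkeeping of the normalizations encoded in (\ref{eq:sl2}) and in the Killing-form identification $\tgot\simeq\tgot^*$. Once that equality is certified, the three bullets fall out immediately from the classification of restricted root systems of irreducible Hermitian symmetric pairs.
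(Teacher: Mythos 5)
Your overall strategy is the right one, and it is worth noting that the paper itself offers no proof at all of this proposition: it is stated purely as a citation of Harish-Chandra and Moore, and the Cayley-transform identification $\cay^*(\Sigma')=\widetilde{\Sigma}$ that you propose as the bridge is exactly what the paper sets up in the lines immediately following the statement (there it is used to prove Proposition \ref{prop:a-plus}, not this one). For the first and third bullets, your reduction to Moore's classification of the restricted root system of an irreducible Hermitian symmetric pair is sound, modulo the normalization bookkeeping that you yourself flag and that the paper carries out explicitly ($\cay$ is the identity on $\tgot_1$ and sends $iX_k$ to $\frac{1}{2}H_k$, so restriction to $\agot$ after transforming really is $\pi'$ up to the stated identification).

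The genuine gap is in your argument for the second bullet. The identity $\langle\alpha,z_o\rangle=\langle\pi'(\alpha),z_o\rangle$ requires $z_o\in\tgot_2={\rm Vect}(H_1,\ldots,H_r)$, and this fails precisely in the non-tube ($BC_r$) case. Concretely, for $G=\SU(2,1)$ one has $r=1$, $\gamma_1=e_1-e_3$, and $z_o$ corresponds to $\frac{1}{3}(1,1,-2)\notin\R\gamma_1$; the compact root $\alpha=e_1-e_2$ satisfies $\langle\alpha,z_o\rangle=0$ while $\pi'(\alpha)=\frac{1}{2}\gamma_1$, so $\langle\pi'(\alpha),z_o\rangle=\frac{1}{2}$. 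Thus you cannot read off compactness of a root from the $z_o$-pairing of its projection, and the third bullet alone does not tell you which roots of $\Rgot$ project onto $\frac{1}{2}(\gamma_i-\gamma_j)$. What is actually needed is the finer assertion, which is part of the Harish-Chandra/Moore analysis rather than a consequence of the bare list for $\Sigma'$, that the positive noncompact roots restrict only to $\frac{1}{2}(\gamma_i+\gamma_j)$, $\gamma_i$ or $\frac{1}{2}\gamma_i$ (nonnegative combinations of the $\gamma_k$), so that any root projecting to $\frac{1}{2}(\gamma_i-\gamma_j)$ is forced to lie in $\Rgot_c$. A similar caveat applies to your ``suitable sign choice'': replacing $\alpha$ by $-\alpha$ to make it positive flips the projection to $\frac{1}{2}(\gamma_j-\gamma_i)$, so obtaining a \emph{positive} compact root with projection $\frac{1}{2}(\gamma_i-\gamma_j)$ for $i<j$ specifically requires the compatibility of $\Rgot_c^+$ with the cascade ordering of the $\gamma_k$, which again comes from the classical analysis and not from the classification of $\Sigma'$ alone.
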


\medskip

Since the $\gamma_k$ belongs to $(\Sigma')^+$, the last two point of Proposition shows that 
\begin{equation}\label{eq:sigma-reduit}
(\Sigma')^+=\left\{\hbox{$\frac{1}{2}$}(\gamma_i+\gamma_j),
\hbox{$\frac{1}{2}$}(\gamma_i-\gamma_j),\ 1\leq i< j\leq r\right\}
\cup\{\gamma_1, \ldots,\gamma_r \}\cup \Xi
\end{equation}
where $\Xi=\emptyset$ or $\Xi=\{\frac{1}{2}\gamma_1, \ldots,\frac{1}{2}\gamma_r \}$.

Since the $\gamma_k$ have the same lenght, it is now easy to see that the set $\Dcal$ defined in 
(\ref{eq:Dcal}) is equal to $\sum_{i=1}^r \R^{\geq 0}(\gamma_1+\cdots+\gamma_r)$. 
Thus the second point of Proposition \ref{prop:appendixB-2} is proved: we have 
\begin{equation}\label{eq:C-cap-gamma}
\Ccal\cap {\rm Vect}(\gamma_1,\ldots,\gamma_r)=\sum_{k=1}^r \R^{\geq 0}(\gamma_1+\cdots+\gamma_k).
\end{equation}

\begin{rem}
We know from (\ref{eq:H-gamma}) that $\|H_k\|=2\|\gamma_k\|^{-1}$. Thus, all the 
$H_k$ have the same lenght.
\end{rem}

Now, we go into the proof of Proposition \ref{prop:a-plus}: we will compute 
a fundamental domain $\agot_+$ for the action of $K$ on $\pgot$.

In the complex semi-simple algebra $\ugot:=\kgot\oplus i\pgot$, we consider the Cartan algebra 
$$
\hgot:=\tgot_1\oplus i\agot.
$$
that we equip with the scalar product $(X,Y)_\hgot:=-B_\ggot(X,Y), \quad \forall\ X,Y\in\hgot$.
We take on $\hgot^*$ the scalar product such that the map $\hgot\to\hgot^*, X\mapsto (X,-)_\hgot$ is orthogonal.

Let $\Rgot(\ggot_\C,\hgot_\C)\subset \hgot^*$ be the set of roots relative to the adjoint action 
of $\hgot_\C$ on $\ggot_\C$. The projection $\pi:\hgot^*\to(i\agot)^*$ sends 
$\Rgot(\ggot_\C,\hgot_\C)$ onto $\widetilde{\Sigma}\cup\{0\}$, where $\Sigma$ is the restricted 
root system, and $\xi\mapsto \tilde{\xi}, \agot^*\simeq (i\agot)^*$ is the 
one to one map defined by 
$\langle \tilde{\xi}, iX\rangle:=\langle \xi, X\rangle$.

The Cayley transform
$$
\cay:=\exp\left(-\frac{i\pi}{2}\ad\Big(\sum_{k=1}^r Y_k\Big)\right).
$$
is an automorphism of the complex Lie algebra $\ggot_\C$. One checks that $\cay(Y)=Y$ for any 
$Y\in\tgot_1$ and that 
$$
\cay(i X_k)=\frac{1}{2} H_k,\quad \forall k=1,\ldots,r.
$$
Hence the Cayley transform sends the subalgebra $\hgot$ onto 
the subalgebra $\tgot$. Moreover one checks easily that 
$\cay:\hgot\to\tgot$ is an orthogonal map, thus we know that all the $X_k$ have the same lenght. 
Let us denoted $\cay^*:\tgot^*\to\hgot^*$ the dual orthogonal map.

Since $\cay$ is an automorphism of $\ggot_\C$, the image of the root system 
$\Rgot:=\Rgot(\ggot_\C,\tgot_\C)$ by $\cay^*$ is equal to the root system $\Rgot(\ggot_\C,\hgot_\C)$. 
Since $\cay$ is the identity map on $\tgot_1$, we have $\cay^*(\Sigma')=\widetilde{\Sigma}$.

If we choose systems of positive roots such that 
$$
\Rgot(\ggot_\C,\hgot_\C)^+:=\cay^*(\Rgot^+_{\rm hol})\quad  {\rm and}\quad  \widetilde{\Sigma^+}:=\cay^*((\Sigma')^+),
$$
we get 
\begin{eqnarray*}
\agot_+&:=&\{X\in\agot\ \vert \langle \beta, X\rangle\geq 0, \ \forall\, \beta\in\Sigma^+\}\\
&=&\{X\in\agot\ \vert \langle \cay^*(\alpha), iX\rangle\geq 0, \ \forall\, \alpha\in (\Sigma')^+\}\\
&=&\{X=\sum_k a_k X_k\ \vert \sum_k a_k\langle \alpha,H_k\rangle\geq 0, \ \forall\, \alpha\in (\Sigma')^+\}.
\end{eqnarray*}

From the description (\ref{eq:sigma-reduit}) of $(\Sigma')^+$ we finally found that 
$$
\agot_+=\sum_{k=1}^r \R^{\geq 0}(X_1+\cdots+X_k).
$$

%%%%%%%%%%%%%%%%%%%%%%%%%%%%%%%%%%%%%%%%%%%%%%%%%%%%%%%%%%%%%
\subsection{Appendix C}\label{sec:appendix-C}
%%%%%%%%%%%%%%%%%%%%%%%%%%%%%%%%%%%%%%%%%%%%%%%%%%%%%%%%%%%%%

Let $\varphi: H\to K$ be a morphism of compact connected Lie group. Let $d\varphi:\hgot\to\kgot$ be the 
corresponding morphism of Lie algebras. Any Hamiltonian $K$-manifold 
$(M,\Phi_K)$ can be seen as a Hamiltonian $H$-manifold, with moment map $\Phi_H=d\varphi^*\circ\Phi_K$.

The morphism $\varphi$ induces a map $\varphi^*: R(K)\to R(H)$. When $E\in\Rfor(K)$ is 
$H$-admissible (see Definition \ref{def:admis}), one can define its 
``restriction'' to $H$, that we denoty by $\varphi^* E$ (or simply $E\vert_H$).

The aim of this appendix is to check that the following version of \textbf{[P2]} holds.

\begin{prop} Let $M$ be a pre-quantized \emph{proper} Hamiltonian $K$-manifold. 
If $M$ is still \emph{proper} as a
Hamiltonian $H$-manifold. Then $\qfor_K(M)$ is $H$-admissible and we
have the following equality in $\Rfor(H)$ :
$$
\qfor_K(M)\vert_H=\qfor_H(M).
$$
\end{prop}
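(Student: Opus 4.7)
The plan is to factor $\varphi : H \to K$ as $\varphi = i \circ p$, where $H' := \varphi(H)$ is the compact connected subgroup of $K$, $p : H \twoheadrightarrow H'$ is the surjection induced by $\varphi$, and $i : H' \hookrightarrow K$ is the inclusion. Since property \textbf{[P2]} is already proved in \cite{pep-formal} for subgroup inclusions, it suffices to handle the surjection $p$ and to verify that the properness hypothesis transfers between $H$ and $H'$.

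Set $\hgot_0 = \ker(d\varphi)$ and $\hgot' = \mathrm{Lie}(H')$. Then $d\varphi$ descends to an isomorphism $\hgot/\hgot_0 \simeq \hgot'$, whose transpose identifies $(\hgot')^*$ with the subspace $\hgot_0^\perp\subset\hgot^*$. Under this identification, $\Phi_H=d\varphi^*\circ\Phi_K$ coincides with $\Phi_{H'}=i^*\circ\Phi_K$ composed with the closed embedding $(\hgot')^*\hookrightarrow\hgot^*$. Consequently $\Phi_H$ is proper if and only if $\Phi_{H'}$ is, and the known case of \textbf{[P2]} for $i$ yields the $H'$-admissibility of $\qfor_K(M)$ together with
\begin{equation*}
\qfor_K(M)\big|_{H'}=\qfor_{H'}(M)\qquad\text{in }\Rfor(H').
\end{equation*}

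Next I transfer this identity along $p$. The pull-back $p^*:R(H')\to R(H)$ identifies $\what{H'}$ with $\{\mu\in\what{H}\mid\mu|_{\hgot_0}=0\}\subset\what{H}$, so it extends to admissible elements: an $H'$-admissible class $E'=\sum_{\mu'\in\what{H'}}a_{\mu'}V^{H'}_{\mu'}$ pulls back to the $H$-admissible class $p^*E'=\sum_{\mu'\in\what{H'}}a_{\mu'}V^{H}_{p^*\mu'}$, with all other $H$-multiplicities vanishing. Since pulling back a $K$-representation via $\varphi$ is the same as first restricting to $H'$ and then pulling back along $p$, this provides the $H$-admissibility of $\qfor_K(M)$ and the identity
\begin{equation*}
\qfor_K(M)\big|_H = p^*\bigl(\qfor_{H'}(M)\bigr)\qquad\text{in }\Rfor(H).
\end{equation*}

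It remains to match the right-hand side with $\qfor_H(M)$. For $\mu\in\what{H}$ with $\mu|_{\hgot_0}\neq 0$, the inclusion $\mathrm{Im}(\Phi_H)\subset\hgot_0^\perp$ forces $\Phi_H^{-1}(H\cdot\mu)=\emptyset$ and hence $\Qcal(M_\mu)=0$; for $\mu=p^*\mu'$ with $\mu'\in\what{H'}$, the fact that $\ker\varphi$ acts trivially on $M$ yields a canonical identification of the Hamiltonian $H$-quotient $\Phi_H^{-1}(H\cdot\mu)/H$ with the Hamiltonian $H'$-quotient $\Phi_{H'}^{-1}(H'\cdot\mu')/H'$, compatible with prequantum line orbibundles and almost complex structures, so that $\Qcal(M_\mu)=\Qcal(M_{\mu'})$. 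Summing over $\mu\in\what{H}$ then gives $\qfor_H(M)=p^*(\qfor_{H'}(M))$, which together with the previous display completes the proof. The main technical point is this last identification of reduced spaces with their prequantum data; once granted, everything else is bookkeeping on the factorization $\varphi=i\circ p$.
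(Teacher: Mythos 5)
Your overall strategy is the same as the paper's: factor $\varphi=i\circ p$ through the image $H'=\varphi(H)$ (the paper writes $L$ and $j$ for your $H'$ and $p$), invoke the already-established subgroup case of \textbf{[P2]} for the inclusion $i$, observe that properness and admissibility transfer across the surjection $p$ because $(dp)^*$ is a closed linear embedding of $(\hgot')^*$ into $\hgot^*$, and reduce everything to comparing the reduced-space invariants for $H$ and for $H'$. Up to notation, your first two displayed identities are exactly the paper's computation.

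Where you diverge is at the crux, namely the equality $\Qcal(M_{\mu',H'})=\Qcal(M_{p^*\mu',H})$. You justify it by a direct geometric identification of the two symplectic quotients together with their prequantum orbibundles and almost complex structures, and you explicitly grant this as the main technical point. That is precisely the half of the argument the paper does not take for granted: when $\mu'$ is a singular value of $\Phi_{H'}$, the integer $\Qcal(M_{\mu'})$ is not the naive quantization of a quotient but is defined via partial desingularization, or equivalently --- and this is what the paper exploits --- as the invariant part $\bigl[\indice^{L}(\Thom^{\Hcal_L}(\Xcal)\otimes\Lcal_\Xcal)\bigr]^{L}$ of the index of the pushed Thom symbol on $\Xcal=M\times\overline{L\cdot\mu}$ (Theorem \ref{theo:loc-index}). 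With that characterization the equality follows from three elementary observations: $\parallel\Phi_L\parallel^2=\parallel\Phi_H\parallel^2$ for suitable invariant scalar products, hence $\Hcal_L=\Hcal_H$ and the two symbols literally coincide, and $[E]^{L}=[j^*E]^{H}$ for any $E\in\Rfor(L)$. If you want to bypass this index-theoretic detour you must check that the Meinrenken--Sjamaar definition of $\Qcal(M_\mu)$ at singular values is insensitive to replacing $H$ by $H/\ker\varphi$; this is true but is not mere bookkeeping, and it is the actual content of the step. A second, smaller issue: when $\ker p$ has a nontrivial finite part, the set $\{\mu\in\what{H}\mid \mu\vert_{\hgot_0}=0\}$ is strictly larger than $p^*\what{H'}$ (think of $\SU(2)\to\SO(3)$), so your dichotomy omits the weights $\mu$ that vanish on $\hgot_0$ but are nontrivial on the component group of $\ker p$; for those one must still argue $\Qcal(M_\mu)=0$, which follows because $\ker\varphi$ is contained in every stabilizer and acts trivially on $\Lcal$ but nontrivially on $\C_{-\mu}$.
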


\begin{proof} The proof is given in \cite{pep-formal} when $\varphi$ is the inclusion of a 
subgroup. Let us generalize this result to a general morphism $\varphi:H\to K$. Let $L:=\varphi(H)$.
We write $\varphi=i\circ j$ where $i:L\croc K$ is the one to one map given by the inclusion, 
and $j:H\to L$ is the onto morphism induced by $\varphi$. 

We consider the one to one linear map  $j^*:\lgot^*\to \hgot^*$. We can choose compatible system of positive roots 
for $H$ and $L$, so that $j^*$ defines a one to one map from $\what{L}$ to $\what{H}$. Then $j^*V^L_\mu= V^H_{j^*(\mu)}$
for any highest weight $\mu\in \what{L}$.

Let $M$ be a \emph{proper} Hamiltonian $K$-manifold which is prequantized by a line bundle $\Lcal$.  
Since $j:H\to L$ is  onto we have:

$\bullet$ Any $E\in \Rfor(K)$ is $H$-admissible if and only if $E$ is $L$-admissible, and $E\vert_H=j^*(E\vert_L)$.

$\bullet$ $M$ is \emph{proper} as a Hamiltonian $H$-manifold if and only if it is \emph{proper} as a
Hamiltonian $L$-manifold.

Hence 
\begin{eqnarray*}
\qfor_K(M)\vert_H=j^*(\qfor_K(M)\vert_L)&=&j^*\left(\qfor_L(M)\right)\\
&=& j^*\Big(\sum_{\mu\in \what{L}}\Qcal(M_{\mu,L})\, V_{\mu}^{L}\Big)\\
&=& \sum_{\mu\in \what{L}}\Qcal(M_{\mu,L})\, V_{j^*(\mu)}^{H},
\end{eqnarray*}
where $M_{\mu,L}$ is the symplectic reduction at $\mu$ relative to the action of $L$ on $M$. Our proof 
is then finished if we check that 
\begin{equation}\label{eq:Qcal-H-L}
\Qcal(M_{\mu,L})=\Qcal(M_{j^*(\mu),H})
\end{equation}
holds for any $\mu\in\what{L}$.

The one to one map $j^*:\lgot^*\to\hgot^*$ satisfies $h\cdot j^*(\xi)=j^*(j(h)\cdot\xi)$ for any 
$h\in H$ and $\xi\in\lgot^*$. Hence the map $j^*$ defines a $\varphi$-equivariant symplectomorphism between 
the coadjoint orbits $L\cdot\xi$ and $H\cdot j^*(\xi)$.

Let $\mu\in\what{L}$. We work now with the {\em proper} Hamiltonian $L$-manifold $\Xcal:=M\times\overline{L\cdot\mu}$ 
which is prequantized by the line bundle $\Lcal_\Xcal:=\Lcal\otimes\C_{[-\mu]}$.  Let 
$\Phi_L: M\times\overline{L\cdot\mu}\to \lgot^*$ be the moment map relative to the $L$-action. Let $\Hcal_L$ be the  
Hamiltonian vector field of the function $\frac{-1}{2}\parallel\Phi_L\parallel^2$.

The ``pushed'' Thom symbol $\Thom^{\Hcal_L}(\Xcal)$ 
is $L$-transversaly elliptic when whe restrict it to a $L$-invariant relatively compact open subset $\Ucal$ such that 
$$
\partial\Ucal\cap \Cr(\parallel\Phi_L\parallel^2)=\emptyset.
$$
Then we may consider the equivariant index $\indice^L_\Ucal(\Thom^{\Hcal_L}(\Xcal)\vert_\Ucal\otimes\Lcal_\Xcal)$.
We know from Theorem  \ref{theo:loc-index} that
\begin{equation}\label{eq:Q-mu-L}
\Big[\indice^L_\Ucal(\Thom^{\Hcal_L}(\Xcal)\vert_\Ucal\otimes\Lcal_\Xcal)\Big]^L=\Qcal(M_{\mu,L})
\end{equation}
when $\Phi_L^{-1}(0)\subset\Ucal$.

Now we look at $\Xcal$ as a Hamiltonian $H$-manifold through the onto morphism 
$j:H\to L$: then $\Xcal\simeq M\times\overline{H\cdot j^*(\mu)}$. Let $\Phi_H=j^*\circ \Phi_L$ be the 
cooresponding moment map. Since $j^*$ is one to one, the functions $\parallel\Phi_L\parallel^2$ 
and $\parallel\Phi_H\parallel^2$ coincides if we choose appropriate invariant scalar products on 
$\lgot^*$ and $\hgot^*$. Then we have $\Phi_L^{-1}(0)=\Phi_H^{-1}(0)$ and $\Hcal_L=\Hcal_H$. 
As before Theorem  \ref{theo:loc-index} gives 
\begin{equation}\label{eq:Q-mu-H}
\Big[\indice^H_\Ucal(\Thom^{\Hcal_H}(\Xcal)\vert_\Ucal\otimes\Lcal_\Xcal)\Big]^H=\Qcal(M_{j^*(\mu),H}).
\end{equation}

Since $[E]^L=[j^*E]^H$ for any $E\in\Rfor(L)$, the relations (\ref{eq:Q-mu-L}) and  (\ref{eq:Q-mu-H}) imply
finally (\ref{eq:Qcal-H-L}).

\end{proof}

%%%%%%%%%%%%%%%%%%%%%%%%%%%%%%%%%%%%    BIBLIOGRAPHIE   %%%%%%%%%%%%%%%%%%%%%%%%%%%%%%%%%%%%%%%%
%%%%%%%%%%%%%%%%%%%%%%%%%%%%%%%%%%%%%%%%%%%%%%%%%%%%%%%%%%%%%%%%%%%%%%%%%%%%%%%%%%%%%%%%%%%%%%%%

{\small

}

\end{document}